\DeclareMathAlphabet{\mathpzc}{OT1}{pzc}{m}{it} 
\numberwithin{equation}{section} 
\numberwithin{figure}{section} 
\theoremstyle{plain}
\newtheorem{prop}{Proposition}[section]
\newtheorem{lemm}[prop]{Lemma}
\newtheorem{sublemma}[prop]{Sublemma}
\newtheorem{theoalph}{Theorem}
\theoremstyle{definition}
\newtheorem{defi}[prop]{Definition}
\theoremstyle{remark}
\newtheorem{rema}[prop]{Remark}
\newtheoremstyle{citing}
  {3pt}
  {3pt}
  {\itshape}
  {}
  {\bfseries}
  {.}
  {.5em}
  {\thmnote{#3}}
\theoremstyle{citing}
\newtheorem*{generic}{}
\newcommand{\C}{\mathbb{C}}
\newcommand{\D}{\mathbb{D}}
\newcommand{\N}{\mathbb{N}}
\newcommand{\R}{\mathbb{R}}
\newcommand{\Z}{\mathbb{Z}}
\newcommand{\cE}{\mathcal{E}}
\newcommand{\cI}{\mathcal{I}}
\newcommand{\cK}{\mathcal{K}}
\newcommand{\cM}{\mathcal{M}}
\newcommand{\cO}{\mathcal{O}}
\newcommand{\cP}{\mathcal{P}}
\newcommand{\cR}{\mathcal{R}}
\newcommand{\cW}{\mathcal{W}}
\newcommand{\cX}{\mathcal{X}}
\newcommand{\fD}{\mathfrak{D}}
\newcommand{\sF}{\mathscr{F}}
\newcommand{\sM}{\mathscr{M}}
\newcommand{\sP}{\mathscr{P}}
\newcommand{\hJ}{\widehat{J}}
\newcommand{\hP}{\widehat{P}}
\newcommand{\hU}{\widehat{U}}
\newcommand{\hV}{\widehat{V}}
\newcommand{\hW}{\widehat{W}}
\newcommand{\hX}{\widehat{X}}
\newcommand{\hrho}{\widehat{\rho}}
\newcommand{\hSigma}{\widehat{\Sigma}}
\newcommand{\tI}{\widetilde{I}}
\newcommand{\tJ}{\widetilde{J}}
\newcommand{\tY}{\widetilde{Y}}
\newcommand{\talpha}{\widetilde{\alpha}}
\newcommand{\tbeta}{\widetilde{\beta}}
\newcommand{\teta}{\widetilde{\teta}}
\newcommand{\tmu}{\widetilde{\mu}}
\newcommand{\tPi}{\widetilde{\Pi}}
\newcommand{\trho}{\widetilde{\rho}}
\newcommand{\partn}[1]{{\smallskip \noindent \textbf{#1.}}}
\renewcommand{\=}{\coloneqq}
\newcommand{\dd}{\hspace{1pt}\operatorname{d}\hspace{-1pt}}
\DeclareMathOperator{\diam}{diam}
\DeclareMathOperator{\supp}{supp} 
\DeclareMathOperator{\crit}{crit}
\newcommand{\pV}{V_f}
\newcommand{\pP}{P_{f,n+1}(0)}
\newcommand{\pL}{L_f}
\newcommand{\pF}{F_f}
\newcommand{\pD}{D_f}
\newcommand{\pchicrit}{\chi_{\crit}(f)}
\newcommand{\psP}{\mathscr{P}_f}
\newcommand{\pfD}{\mathfrak{D}_f}
\newcommand{\df}{\widehat{f}_\lambda}
\newcommand{\signs}{\{ +, - \}^{\N}}
\newcommand{\uvarsigma}{\underline{\varsigma}}
\newcommand{\fs}{f_{\uvarsigma}}
\newcommand{\hcX}{\widehat{\cX}}
\newcommand{\wtp}{p^+}
\newcommand{\whf}{\widehat{f}}
\newcommand{\whm}{\widehat{m}}
\newcommand{\whp}{p^-}
\newcommand{\whx}{\widehat{x}}
\newcommand{\uwhx}{\underline{\whx}}
\newcommand{\chicritf}{\chi_{\crit}(f)}
\newcommand{\chicritpf}{\chi_{\crit}(\df)}
\newcommand{\chiinfR}{\chi_{\operatorname{inf}}^{\R}}
\newcommand{\chiinfC}{\chi_{\operatorname{inf}}}
\newcommand{\cl}[1]{{\rm cl}({#1})}
\begin{document}

\title[Sensitive dependence at positive temperature]{Sensitive dependence of geometric Gibbs states \\ at positive temperature}
\author{Daniel Coronel}
\address{Daniel Coronel, Departamento de Matem{\'a}ticas, Universidad Andr{\'e}s Bello, Avenida Rep\'ublica~220, Santiago, Chile}
\email{alvaro.coronel@unab.cl}
\author{Juan Rivera-Letelier}
\address{Department of Mathematics, University of Rochester. Hylan Building, Rochester, NY~14627, U.S.A.}
\email{riveraletelier@gmail.com}
\urladdr{\url{http://rivera-letelier.org/}}

\begin{abstract}
  We give the first example of a smooth family of real and complex maps having sensitive dependence of geometric Gibbs states at positive temperature.
  This family consists of quadratic-like maps that are non-uniformly hyperbolic in a strong sense.
  We show that for a dense set of maps in the family the geometric Gibbs states do not converge at positive temperature.
  These are the first examples of non-convergence at positive temperature in statistical mechanics or the thermodynamic formalism, and answers a question of van Enter and Ruszel.
  We also show that this phenomenon is robust: There is an open set of analytic 2\nobreakdash-parameter families of quadratic-like maps that exhibit sensitive dependence of geometric Gibbs states at positive temperature. 
\end{abstract}

\maketitle

%
%

\section{Introduction}
The main problem in statistical mechanics and the thermodynamic formalism is to describe the set of Gibbs states for a given interaction or potential. 
When the interaction or potential depends on some parameters, one is interested in knowing how the set of Gibbs states changes as a function of these parameters.
In several natural models in statistical mechanics, Gibbs states depend continuously (in the weak* topology) on the temperature, even if the number of Gibbs states changes with the temperature, see for instance \cite{AizDumSid15} and references therein.
In the thermodynamic formalism, it is known that if the dynamical system has sufficient expansion and the potential is sufficiently regular then in good situations for every value of the temperature there is a unique Gibbs state that varies continuously with the potential, see for example \cite{Bow75,BurCliFisTho1703,Cli18,Klo1711,LiRiv14b,LiRiv14a,Prz18,PrzRiv11,PrzRiv1405,VarVia10} and references therein.

In this article we study the temperature dependence of Gibbs states at positive temperature of smooth maps for the corresponding geometric potentials.
The geometric potential is completely determined by the map.
It arises naturally in several important problems, like in the construction of physical measures, as in the pioneering work of Sina{\u{\i}}~\cite{Sin72}, Ruelle~\cite{Rue76}, and Bowen~\cite{Bow75}.
The pressure of the geometric potential is connected, among other things, to several multifractal spectra, and large deviations rate functions, see for example~\cite[Lemma~2]{BinMakSmi03}, \cite{ChuRivTak1610,GelPrzRam10,GelPrzRam16}, \cite[Theorems~1.2 and~1.3]{KelNow92}, \cite[Appendix~B]{PrzRiv11}, \cite{OhWin17} and references therein.
 
The presence of a critical point may prevent the system to have geometric Gibbs states.
For example, in~\cite{CorRiv13} and~\cite{CorRiv15b} we constructed examples of Collet-Eckmann maps in the quadratic family, such that for every temperature above a critical value there a unique geometric Gibbs state, and for every temperature smaller than the critical value there is no geometric Gibbs state.
However, in those examples the geometric Gibbs states converge to a limit measure as the temperature drops to the critical value.

The main result of this article shows that there are analytic families of quadratic-like maps for which  an arbitrarily small perturbation of the parameter can have a large effect on the positive-temperature geometric Gibbs states.
Furthermore, this phenomenon is robust: There is an open set of analytic 2-parameter families of quadratic-like maps that exhibit ``sensitive dependence'' of geometric Gibbs states. 
In particular, this provides the first example in statistical mechanics and the thermodynamic formalism of non-convergence of Gibbs states at positive temperature, thus answering a question raised by van Enter and Ruszel in~\cite{vEnRus07}.
Analogous examples at zero temperature were given in~\cite{CorRiv1708}, see also~\cite{BisGarThi18,ChaHoc10,CorRiv15a,vEnRus07} for examples of non-convergence in a symbolic setting.

To state our results more precisely, we recall the concept of quadratic-like maps of Douady and Hubbard~\cite{DouHub85a}.
Given simply connected subsets~$U$ and~$V$ of~$\C$ satisfying~$\cl{U} \subset V$, a holomorphic map~$f : U \to V$ is a \emph{quadratic-like map} if it is proper of degree~$2$.
Such a map has a unique point at which the derivative~$Df$ vanishes; it is the \emph{critical point of~$f$}.
The \textit{filled-in Julia set} of a quadratic-like map~$f : U \to V$ is
$$ K(f)
\=
\{ z \in U \mid \text{ for every integer~$n \ge 1$, $f^n(z) \in U$} \}. $$
The \emph{Julia set $J(f)$} of~$f$ is the boundary of~$K(f)$, and it coincides with the closure of the repelling periodic points of~$f$.

Given a quadratic-like map~$f$, denote by~$\sM_f$ the space of all probability measures on~$J(f)$ that are invariant by~$f$.
For~$\mu$ in~$\sM_f$ denote by~$h_\mu(f)$ the measure-theoretic entropy of~$\mu$, and for each~$t$ in~$\R$ put
$$ P_f(t) \= \sup \left\{ h_\mu(f) - t \int \log |Df| \dd \mu \mid \mu \in \sM_f \right\}. $$
It is the \emph{pressure of~$f|_{J(f)}$ for the potential~$-t \log |Df|$}.
A measure~$\mu$ realizing the supremum above is an \emph{equilibrium state of~$f|_{J(f)}$ for the potential~$- t \log |Df|$}  or a \emph{geometric Gibbs state}.

A quadratic-like map~$f : U \to V$ is \emph{real} if~$U$, $V$, and~$f$ are all invariant under complex conjugation.
Note that the critical point of such a map is real.
A real quadratic-like map with critical point~$c$ is \emph{essentially topologically exact}\footnote{In~\S 3.1 we extend the definition of \emph{essentially topologically exact map} to certain quadratic-like maps that are not necessarily real.} if~$f^2(c)$ is defined and is different from~$f(c)$, if~$f$ maps the interval~$I(f)$ bounded by~$f(c)$ and~$f^2(c)$ to itself, and if~$f|_{I(f)}$ is topologically exact.
For such a map~$f$ we consider both the interval map~$f|_{I(f)}$ and the complex map~$f$ acting on its Julia set~$J(f)$.

Let~$f$ be a real quadratic-like map that is essentially topologically exact.
Denote by~$\sM_f^{\R}$ the space of all probability measures on~$I(f)$ that are invariant by~$f$.
For~$\mu$ in~$\sM_f^{\R}$ we denote by~$h_\mu(f)$ the measure-theoretic entropy of~$\mu$, and for each~$t$ in~$\R$ we put
$$ P^{\R}_f(t) \= \sup \left\{ h_\mu(f) - t \int \log |Df| \dd \mu \mid \mu \in \sM_f^{\R} \right\}. $$
It is the \emph{pressure of~$f|_{I(f)}$ for the potential~$-t \log |Df|$}.
A measure~$\mu$ realizing the supremum above is an \emph{equilibrium state of~$f|_{I(f)}$ for the potential~$- t \log |Df|$}  or a \emph{geometric Gibbs state}.

\begin{defi}[Sensitive dependence of positive-temperature Gibbs states]
\label{d:sensitive dependence}
Let~$\Lambda$ be a topological space and~$(f_\lambda)_{\lambda \in \Lambda}$ a continuous family of real or complex quadratic-like maps.
The family~$(f_\lambda)_{\lambda \in \Lambda}$ has \emph{sensitive dependence of positive-temperature geometric Gibbs states}, if there are a parameter~$\lambda_0$ and a positive number~$t_{*, 0}$ such that for every  increasing sequence of positive number $(\delta_\ell)_{\ell\in \N}$ converging to~$1$ as~$\ell \to + \infty$, there are a parameter~$\lambda$ in~$\Lambda$ arbitrarily close to~$\lambda_0$  and a positive number~$t_*$ arbitrarily close to~$t_{*, 0}$ such that the following property holds: For each~$\beta \in (0,t_*)$ there is a unique equilibrium state~$\rho_\beta^{\R}(\lambda)$ of~$f|_{I(f_\lambda)}$ (resp. $\rho_\beta(\lambda)$ 
of~$f|_{J(f_\lambda)}$) for the potential~$-\beta \log |Df_\lambda|$ and 
if we put for every $\ell \in \N$,
$\beta_\ell \= t_*\cdot\delta_\ell $, then the sequence of equilibrium states~$\left( \rho_{\beta_\ell}^{\R} (\lambda) \right)_{\ell \in \N}$ (resp. $\left( \rho_{\beta_\ell} (\lambda) \right)_{\ell \in \N}$) does not converge.
\end{defi}

We note that in the above situation the map~$f|_{I(f_\lambda)}$ (resp.~$f|_{J(f_\lambda)}$) must have a ``phase transition'' at~$t = t_*$ in the statistical mechanics sense, see~\cite[Theorem~A]{PrzRiv1405} for the real case, and~\cite[Main Theorem]{PrzRiv11} for the complex case.
Recall that for a real number~$t_*$, the map~$f|_{I(f)}$ (resp.~$f|_{J(f)}$) has a \emph{phase transition at~$t = t_*$}, if the geometric pressure function~$P_f^{\R}$ (resp.~$P_f$) is not real analytic at~$t = t_*$.

Our main results are stated as Theorems~\ref{t:pressure estimates} and~\ref{t:temperature dependence} in~\S\ref{ss:main results}.
The following is a simple consequence of these results that is easier to state.

\begin{generic}[Sensitive Dependence at Positive Temperature]
There is an open subset~$\Lambda_0$ of~$\C$ intersecting~$\R$,  a holomorphic family of quadratic-like maps~$(\df)_{\lambda \in \Lambda_0}$ and 
a compact subset $\Lambda$ of $\Lambda_0\cap \R$ such that the following properties hold.
For every real parameter~$\lambda$ in~$\Lambda$ the map~$\df$ is real, and the family of real (resp. complex) maps~$(\df)_{\lambda \in \Lambda}$ has sensitive dependence of positive-temperature geometric Gibbs states.
Moreover, this sensitive dependence occurs at every parameter in~$\Lambda$, in the sense that every parameter can be considered as $\lambda_0$ in  Definition~\ref{d:sensitive dependence}. 
\end{generic}

The family of quadratic-like maps~$(\df)_{\lambda \in \Lambda_0}$ that we use to prove the Sensitive Dependence at Positive Temperature is given explicitly in~\cite[\S3.3]{CorRiv1708}.
The maps in the subfamily~$(\df)_{\lambda \in \Lambda}$ are transitive and non-uniformly hyperbolic in a strong sense.
For example, they satisfy the Collet-Eckmann condition with uniform constants, and moreover they have uniform ``goodness constants'' in the sense of~\cite[Definition~2.2]{BalBenSch15}, 
\emph{cf}. Proposition~\ref{p:transporting Peierls}.
So, the lack of expansion is not responsible for the sensitive dependence of positive-temperature geometric Gibbs states.

We prove that as the temperature drops to~$1/t_*$ the geometric Gibbs states of~$\df|_{I(\df)}$ (resp. $\df|_{J(\df)}$) oscillate between~2 periodic measures.
So, at certain temperatures the geometric Gibbs state is close to one of these periodic measures, and at other temperatures it is close to the other periodic measure, see Theorem~\ref{t:temperature dependence} in~\S\ref{ss:main results}. 

The conclusions of the Sensitive Dependence at Positive Temperature hold for an open set of holomorphic 2\nobreakdash-parameter families of quadratic-like maps, see \cite[Remark~3.4]{CorRiv1708}.
Thus, for quadratic-like maps, the sensitive dependence of positive-temperature Gibbs states is a robust phenomenon for 2\nobreakdash-parameter families.

Note that the Sensitive Dependence at Positive Temperature does not say anything about the behavior of the geometric Gibbs states of~$\whf_{\lambda_0}|_{I(\whf_{\lambda_0})}$ or~$\whf_{\lambda_0}|_{J(\whf_{\lambda_0})}$ for temperatures above~$1/t_{*, 0}$.
As explained in~\cite[Remark~3.5]{CorRiv1708}, our results show that the parameter~$\lambda_0$ can be chosen so that the geometric Gibbs states of~$\whf_{\lambda_0}|_{I(\whf_{\lambda_0})}$ (resp.~$\whf_{\lambda_0}|_{J(\whf_{\lambda_0})}$) converge as the temperature drops to~$1/t_{*, 0}$, and that~$\lambda_0$ can be chosen so that they do not converge.

As a direct consequence of Theorem~\ref{t:pressure estimates} in~\S\ref{ss:main results} we have that for every~$\lambda$ in~$\Lambda$ the map~$\df$ has a phase transition  for both pressure functions~$P_{\df}^\R(t)$ and~$P_{\df}(t)$.
More precisely, we have the following.
For a quadratic-like map~$f$ with critical point~$c$, put
$$
\chicritf \= \liminf_{n\to +\infty} \frac{1}{n} \log |Df^n(f(c))|.
$$

\begin{generic}[High-Order Phase Transitions Theorem]
  Let~$\Lambda_0$, $\Lambda$, and~$(\df)_{\lambda \in \Lambda_0}$ be as in the statement of the Sensitive Dependence at Positive Temperature.
  Then there are positive constants $A^+, A^-, B^+$, and $B^-$ and a real analytic function~$t_* \colon \Lambda_0 \to (0, + \infty)$ such that for every~$\lambda$ in $\Lambda$, we have for every~$t \ge t_*(\lambda)$
$$
P_{\df}^\R(t) =  P_{\df}(t) = - t \frac{\chicritpf}{2},
$$
and for every~$t$ in~$(0, t_*(\lambda))$ close to~$t_*(\lambda)$, we have
\begin{multline*}
- t \frac{\chicritpf}{2} +  2^{-\left(\frac{A^-}{\sqrt{t_*(\lambda) - t}} + B^-\right)^3}
\le
P_{\df}^\R(t)
\le
P_{\df}(t)
\\ \le
- t \frac{\chicritpf}{2} + 2^{-\left(\frac{A^+}{\sqrt{t_*(\lambda) - t}} - B^+ \right)^3}.
\end{multline*}
In particular, both~$P_{\df}^{\R}$ and~$P_{\df}$ are of class~$C^2$ at~$t = t_*$, but neither of these functions is real analytic at~$t = t_*(\lambda)$.
\end{generic}

These phase transitions resemble a Kosterlitz-Thouless singularity, see for example~\cite[\S6]{Her07} for background.
In fact, near the critical parameter the geometric pressure function behaves as~$x \mapsto \exp(- |x|^{-3/2})$ near~$x = 0$.
The phase transitions in~\cite[Main Theorem]{CorRiv15b} are similar, but different: The geometric pressure function behaves as~$x \mapsto \exp(- x^{-2})$ near~$x = 0$.

The phase transitions in the theorem above are of ``freezing type'', in the sense that after the phase transition the pressure is affine and equal to its asymptote.
Two recent examples of freezing phase transitions in a symbolic setting are in~\cite{BruLep13,BruLep15}.
The authors prove that above a critical temperature there is a unique Gibbs state with full support and that below the critical temperature the unique Gibbs state is the unique shift-invariant measure of a substitution subshift, like the Thue-Morse subshift.
In contrast, in the theorem above for every~$\lambda$ in $\Lambda$ and~$t \ge t_*(\lambda)$ there are no geometric Gibbs state of~$\df|_{I(\df)}$ (resp. $\df|_{J(\df)}$) for the potential~$-t \log|D\df|$, see Theorem~\ref{t:pressure estimates} in~\S3.
Note in particular that, as temperature drops to~$1/t_*(\lambda)$, none of the accumulation measures of the geometric Gibbs states is a Gibbs state.
See \cite{BruTerTod1711,CorRiv13,DiaGelRam14,Vel1704} and references therein for other examples of phase transitions in the thermodynamic formalism.

\subsection{Organization}
\label{ss:organization}
After recalling some basic notions of the quadratic family in~\S\ref{s:preliminaries}, in~\S\ref{s:main results} we state Theorems~\ref{t:pressure estimates} and~\ref{t:temperature dependence}, and prove the Sensitive Dependence at Positive Temperature assuming these results.
To state Theorems~\ref{t:pressure estimates} and~\ref{t:temperature dependence}, we recall in~\S\ref{ss:uniform families} the notion of ``uniform families'' of quadratic-like maps introduced in~\cite{CorRiv1708}, some general facts about conformal measures in~\S\ref{ss:conformal measures}, and in~\S\ref{ss:inducing scheme} the inducing scheme that was first introduced in~\cite{CorRiv13}. 

The proof of Theorems~\ref{t:pressure estimates} and~\ref{t:temperature dependence} are given in~\S\ref{s:proofs}.
After describing the subfamily of maps in~\S\ref{ss:subfamily}, we give the proof of Theorem~\ref{t:pressure estimates} in~\S\ref{ss:proof of pressure estimates}.
It relies on several results and concepts from~\cite{CorRiv1708}, including the Geometric Peierls Condition, that we briefly summarize in Appendix~\ref{s:estimating pressure} for the reader's convenience.
These results are used to estimate the pressure and conformal measures in terms of the postcritical series.
The proof of Theorem~\ref{t:temperature dependence} is given in~\S\ref{ss:proof of temperature dependence}.
Roughly speaking, the most difficult part is to estimate the postcritical series by a certain numerical series.
The necessary estimates of the numerical series, in Appendix~\ref{s:reduction}, are given in an abstract setting that is independent of the rest of the paper.

\subsection{Notes and references}
\label{ss:notes}
The proofs of Theorems~\ref{t:pressure estimates} and~\ref{t:temperature dependence} rely strongly on estimates for the pressure functions and conformal measures for
quasi-quadratic maps satisfying the Geometric Peierls Condition proved in \cite{CorRiv1708}.
See Appendix~\ref{s:estimating pressure} for the definition of this concept.
Indeed, we use the same family of quasi-quadratic maps $(\df)_{\lambda\in \Lambda_0}$ as in \cite{CorRiv1708} but here we choose a different set of parameters.
That is, we choose a different subset~$\Lambda$ of~$\Lambda_0$.

\subsection{Acknowledgments}
We would like to thank the referees for the comments and pointers to the literature.

The first named author acknowledges partial support from FONDECYT grant 1161221, and would like to thank the University of Rochester for its hospitality.
The second named author acknowledges partial support from NSF grant DMS-1700291, and would like to thank Universidad Andr{\'e}s Bello for its hospitality.

\section{Preliminaries}
\label{s:preliminaries}

The main objective of this section is to recall the definition of Yoccoz puzzle pieces in~\S\ref{ss:puzzles} and to introduce a subset
of parameters of the quadratic family in~\S\ref{ss:Parameters} that possesses the combinatorics needed for our results.

We use~$\N$ to denote the set of integers that are greater than or equal to~$1$ and~$\N_0 \= \N \cup \{ 0 \}$.
For a Borel measure~$\rho$ on~$\C$, denote by~$\supp(\rho)$ its support.
Given an open subset~$G$ of~$\C$ and a map~$f : G \to \C$ that is a biholomorphism onto its image, the \emph{distortion of~$f$} on a subset~$C$ of~$G$ is
$$ \sup_{x, y \in C} \frac{|Df(x)|}{|Df(y)|}. $$

\subsection{Quadratic polynomials, Green's functions, and B{\"o}ttcher coordinates}
\label{ss:quadratic polynomials}
In this subsection and the next we recall some basic facts about the dynamics of
complex quadratic polynomials, see for instance~\cite{CarGam93} or~\cite{Mil06}
for references.

For~$c$ in~$\C$ we denote by~$f_c$ the complex quadratic polynomial
$$ f_c(z) = z^2 + c, $$
and by~$K_c$ the \emph{filled Julia set} of $f_c$; that is, the set of all
points~$z$ in~$\C$ whose forward orbit under~$f_c$ is bounded in~$\C$.
The set~$K_c$ is compact and its complement is the connected set consisting of
all points whose orbit converges to infinity in the Riemann sphere.
Furthermore, we have $f_c^{-1}(K_c) = K_c$ and~$f_c(K_c) = K_c$. 
The boundary~$J_c$ of~$K_c$ is the \emph{Julia set of~$f_c$}.

For a parameter~$c$ in~$\C$, the \emph{Green's function of~$K_c$} is the function
$G_c:\C \to [0,+\infty)$ that is identically~$0$ on~$K_c$, and that
for~$z$ outside~$K_c$ is given by the limit,
\begin{equation}
\label{def:Green function}
  G_c(z) = \lim_{n\rightarrow +\infty} \frac{1}{2^n} \log |f_c^n(z)| > 0.
\end{equation}
The function~$G_c$ is continuous, subharmonic, satisfies~$G_c \circ f_c = 2G_c$ on~$\C$, and it is harmonic and strictly positive outside~$K_c$.
On the other hand, the critical values of~$G_c$ are bounded from above by~$G_c(0)$, and
the open set
$$ U_c \= \{z\in \C \mid G_c(z) > G_c(0)\} $$
is homeomorphic to a punctured disk.
Notice that $G_c(c)=2G_c(0)$, thus~$U_c$ contains~$c$ if $0\notin K_c$. 

By B{\"o}ttcher's Theorem there is a unique conformal representation
\[
 \varphi_c: U_c
\rightarrow
\{z\in \C \mid |z| > \exp (G_c(0)) \},
\]
and this map conjugates~$f_c$ to $z \mapsto z^2$.
It is called \emph{the B{\"o}ttcher coordinate of~$f_c$} and satisfies $G_c =
\log |\varphi_c|$.

\subsection{External rays and equipotentials}
\label{ss:rays and equipotentials}
Let~$c$ be in~$\C$.
For~$v > 0$ the \emph{equipotential~$v$ of~$f_c$} is by definition~$G_c^{-1}(v)$.
A \emph{Green's line of~$G_c$} is a smooth curve on the complement of~$K_c$ in~$\C$ that is orthogonal to the equipotentials of~$G_c$ and that is maximal with this property. 
Given~$t$ in~$\R / \Z$, the \emph{external ray of angle~$t$ of~$f_c$}, denoted by~$R_c(t)$, is the Green's line of~$G_c$ containing
$$ \{ \varphi_c^{-1}(r \exp(2 \pi i t)) \mid \exp(G_c(0))< r < +\infty \}. $$
By the identity~$G_c \circ f_c= 2G_c$, for each~$v > 0$ and each~$t$ in~$\R / \Z$ the map~$f_c$ maps the equipotential~$v$ to the equipotential~$2v$ and maps~$R_c(t)$ to~$R_c(2t)$.
For~$t$ in~$\R / \Z$ the external ray~$R_c(t)$ \emph{lands at a point~$z$}, if~$G_c : R_c(t) \to (0, + \infty)$ is a bijection and if~$G_c|_{R_c(t)}^{-1}(v)$ converges to~$z$ as~$v$ converges to~$0$ in~$(0, + \infty)$.
By the continuity of~$G_c$, every landing point is in $J_c = \partial K_c$.

The \emph{Mandelbrot set~$\cM$} is the subset of~$\C$ of those
parameters~$c$ for which~$K_c$ is connected.
The function
\[
 \begin{array}{cccl}
  \Phi : &\C \setminus \cM & \to & \C \setminus \cl{\D}\\
          &     c         &  \mapsto           & \Phi(c) \= \varphi_c(c)
 \end{array}
\]
is a conformal representation, see~\cite[VIII, \emph{Th{\'e}or{\`e}me}~1]{DouHub84}.
For~$v > 0$ the \emph{equipotential~$v$ of~$\cM$} is by definition
$$ \cE(v) \= \Phi^{-1}(\{z\in \C \mid |z| = v \}). $$ 
On the other hand, for~$t$ in~$\R / \Z$ the set
$$ \cR(t) \= \Phi^{-1}(\{r \exp(2 \pi i t) \mid r > 1 \}) $$
is called the \emph{external ray of angle~$t$ of~$\cM$}.
We say that $\cR(t)$ \emph{lands at a point~$z$} in~$\C$, if~$\Phi^{-1} (r \exp(2\pi i t))$ converges to~$z$ as $r \searrow 1$.
When this happens~$z$ belongs to~$\partial \cM$.

\subsection{The wake~$1/2$}
\label{ss:wake 1/2}
In this subsection we recall a few facts that can be found for example
in~\cite{DouHub84} or~\cite{Mil00c}.

The external rays~$\cR(1/3)$ and~$\cR(2/3)$ of~$\cM$ land at the parameter~$c = -3/4$, and these are the only external rays of~$\cM$ that land at this point, see
for example~\cite[Theorem~1.2]{Mil00c}.
In particular, the complement in~$\C$ of the set
$$ \cR(1/3) \cup \cR(2/3) \cup \{ - 3/4 \} $$
has~2 connected components; we denote by~$\cW$ the connected component
containing the point~$c = -2$ of~$\cM$.

For each parameter~$c$ in~$\cW$ the map~$f_c$ has~2 distinct fixed points; one
of the them is the landing point of the external ray~$R_c(0)$ and it is denoted
by~$\beta(c)$; the other one is denoted by~$\alpha(c)$.
The only external ray landing at~$\beta(c)$ is~$R_c(0)$, and
the only external ray landing at~$-\beta(c)$
is~$R_c(1/2)$.

Moreover, for every parameter~$c$ in~$\cW$ the only external rays 
of~$f_c$ landing at~$\alpha(c)$ are~$R_c(1/3)$
and~$R_c(2/3)$, see for example~\cite[Theorem~1.2]{Mil00c}.
The complement of~$R_c(1/3) \cup R_c(2/3) \cup \{ \alpha(c)
\}$ in~$\C$ has~2 connected components; one containing~$- \beta(c)$ and~$z =
c$, and the other one containing~$\beta(c)$ and~$z = 0$.
On the other hand, the point~$\alpha(c)$ has~2 preimages by~$f_c$: Itself and~$\talpha(c) \= - \alpha(c)$.
The only external rays landing at~$\talpha(c)$ are~$R_c(1/6)$
and~$R_c(5/6)$.

\subsection{Yoccoz puzzles and para-puzzle}
\label{ss:puzzles}
In this subsection we recall the definitions of Yoccoz puzzles and para-puzzle.
We follow~\cite{Roe00}.

\begin{defi}[Yoccoz puzzles]
Fix~$c$ in~$\cW$ and consider the open region $X_c^\circ \= \{z\in \C \mid G_c(z) <
1\}$. 
The \emph{Yoccoz puzzle of~$f_c$} is given by the following sequence of
graphs~$(I_{c, n})_{n = 0}^{+ \infty}$ defined for~$n = 0$ by:
\[
 I_{c,0} \= \partial X_c^\circ \cup (X_c^\circ \cap \cl{R_c(1/3)} \cap \cl{R_c(2/3)}),
\]
and for~$n \ge 1$ by~$I_{c,n} \= f_c^{-n}(I_{c,0})$.
The \emph{puzzle pieces of depth~$n$} are the connected components of $f_c^{-n}(X_c^\circ) \setminus I_{c,n}$.
The puzzle piece of depth~$n$ containing a point~$z$ is denoted by~$P_{c,n}(z)$.
\end{defi}

Note that for a real parameter~$c$, every puzzle piece intersecting the real line is invariant under complex conjugation.
Since puzzle pieces are simply-connected, it follows that the intersection of such a puzzle piece with~$\R$ is an interval.

\begin{defi}[Yoccoz para-puzzle\footnote{In contrast to~\cite{Roe00}, we only consider the para-puzzle in the wake~$\cW$.}]
Given an integer~$n \ge 0$, put
$$ J_n
\=
\{t\in [1/3,2/3] \mid 2^n t ~ (\mathrm{mod}\, 1) \in \{1/3,2/3\} \}, $$
let~$\cX_n$ be the intersection of~$\cW$ with the open region in the parameter plane bounded by the equipotential~$\cE(2^{-n})$ of~$\cM$, and put
\[
 \cI_{n}
\=
\partial \cX_n \cup \left( \cX_n \cap \bigcup_{t\in J_n} \cl{\cR(t)} \right).
\]
Then the \emph{Yoccoz para-puzzle of~$\cW$} is the sequence of graphs~$(\cI_n)_{n = 0}^{+ \infty}$.
The \emph{para-puzzle pieces of depth~$n$} are the connected components of $\cX_n \setminus \cI_n$.
The para-puzzle piece of depth~$n$ containing a parameter~$c$ is denoted by~$\cP_n(c)$.  
\end{defi}
Observe that there is only~1 para-puzzle piece of depth~$0$, and only~1 para-puzzle piece of depth~$1$; they are bounded by the same external rays but different equipotentials.
Both of them contain~$c = - 2$.

Fix a parameter~$c$ in~$\cP_0(-2)$.
There are precisely~2 puzzle pieces of depth~$0$: $P_{c, 0}(\beta(c))$ and~$P_{c, 0}(-\beta(c))$.
Each of them is bounded by the equipotential~$1$ and by the closures of the
external rays landing at~$\alpha(c)$.
Furthermore, the critical value~$c$ of~$f_c$ is contained 
in~$P_{c, 0}(- \beta(c))$ and the critical point in~$P_{c, 0}(\beta(c))$.
It follows that the set~$f_c^{-1}(P_{c, 0}(\beta(c)))$ is the 
disjoint union of~$P_{c, 1}(- \beta(c))$ and~$P_{c, 1}(\beta(c))$, so~$f_c$ maps each of the sets~$P_{c, 1}(-\beta(c))$ and~$P_{c,
1}(\beta(c))$ biholomorphically to~$P_{c, 0}(\beta(c))$.
Moreover, there are precisely~3 puzzle pieces of depth~$1$: 
$$ P_{c, 1}(-\beta(c)),
P_{c, 1}(0)
\quad \text{and} \quad
P_{c, 1}(\beta(c)); $$
$P_{c, 1}(- \beta(c))$ is bounded by the equipotential~$1/2$ and by the 
closures of the external rays that land at~$\alpha(c)$; $P_{c, 1}(\beta(c))$ is
bounded by the equipotential~$1/2$ and by the closures of the external rays that
land at~$\talpha(c)$; and~$P_{c, 1}(0)$ is bounded by the equipotential~$1/2$
and by the closures of the external rays that land at~$\alpha(c)$ and
at~$\talpha(c)$.
In particular, the closure of~$P_{c, 1}(\beta(c))$ is 
contained in~$P_{c, 0}(\beta(c))$.
It follows from this that for each integer~$n \ge 1$ the map~$f_c^n$ maps~$P_{c, n}(- \beta(c))$ biholomorphically to~$P_{c, 0}(\beta(c))$.

The following lemma proved in~\cite[Lemma~3.3]{CorRiv13} is useful for the description of the parameter introduced in~\S\S\ref{ss:Parameters}.
\begin{lemm}
\label{l:auxiliary para-puzzle pieces}
For each integer~$n \ge 1$, the following properties hold.
\begin{enumerate}
\item[1.]
The para-puzzle piece~$\cP_n(-2)$ contains the closure of~$\cP_{n + 1}(-2)$.
\item[2.]
For each  parameter~$c$ in~$\cP_n(-2)$  the 
critical value~$c$ of~$f_c$ is in~$P_{c, n}(-\beta(c))$.
\end{enumerate}
\end{lemm}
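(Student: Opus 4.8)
The plan is to establish the second assertion first, via the standard dictionary between the Yoccoz puzzle and the Yoccoz para\nobreakdash-puzzle, and then to deduce the first assertion from it together with an elementary computation on external angles.

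For the second assertion, I would start from the Douady--Hubbard correspondence (see also~\cite{Roe00}): a parameter~$c$ in~$\cW \setminus \cM$ lies on the parameter ray~$\cR(t)$ if and only if the critical value~$c$ of~$f_c$ lies on the dynamical ray~$R_c(t)$ --- immediate from~$\Phi(c) = \varphi_c(c)$ together with~$G_c \circ f_c = 2 G_c$ and~$f_c(R_c(t)) = R_c(2t)$ --- and, likewise, $c$ lies on the equipotential~$\cE(2^{-n})$ of~$\cM$ exactly when~$f_c^n(c)$ lies on the equipotential~$1$ of~$f_c$. Combining these, for~$c$ in~$\cX_n$ the critical value~$c$ belongs to the puzzle graph~$I_{c,n}$ precisely when~$c$ belongs to~$\cI_n$; applying the same with~$n-1$ in place of~$n$, and using~$\cX_n \setminus \cI_n \subset \cX_{n-1} \setminus \cI_{n-1}$ (which follows from~$\cX_n \subset \cX_{n-1}$ and~$J_{n-1} \subset J_n$), one also obtains that over~$\cX_n \setminus \cI_n$ the critical point~$0$ of~$f_c$ never lies on~$I_{c,n}$. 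Consequently, over~$\cX_n \setminus \cI_n$ the graph~$I_{c,n}$ --- and with it every depth\nobreakdash-$n$ puzzle piece --- moves holomorphically with~$c$, through the holomorphic motion furnished by the B\"ottcher coordinates~$\varphi_c$, while the critical value never crosses it. Hence the depth\nobreakdash-$n$ puzzle piece containing the critical value carries a label that is locally constant on~$\cX_n \setminus \cI_n$, and so constant on the connected set~$\cP_n(-2)$; the same holds, more easily, for the puzzle piece containing the preperiodic repelling point~$-\beta(c)$. Since at~$c = -2$ one has~$\beta(-2) = 2$ and the critical value~$-2$ equals~$-\beta(-2)$, these two labels coincide there, hence throughout~$\cP_n(-2)$, which is the second assertion.

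For the first assertion, the soft inclusion~$\cP_{n+1}(-2) \subseteq \cP_n(-2)$ is automatic: $\cX_{n+1} \subset \cX_n$ because the equipotentials of~$\cM$ shrink toward~$\cM$, and~$J_n \subset J_{n+1}$ because~$2t$ is congruent to~$1/3$ or~$2/3$ modulo~$1$ whenever~$t$ is, so~$\cI_n \cap \cX_{n+1} \subset \cI_{n+1}$ and the depth\nobreakdash-$(n+1)$ para\nobreakdash-puzzle piece at~$-2$ lies inside the depth\nobreakdash-$n$ one. To promote this to~$\cl{\cP_{n+1}(-2)} \subset \cP_n(-2)$ it suffices, by a connectedness argument (a boundary point of~$\cP_{n+1}(-2)$ lying in the open set~$\cX_n \setminus \cI_n$ is a limit of points of~$\cP_{n+1}(-2) \subset \cP_n(-2)$, hence lies in the component~$\cP_n(-2)$), to show that~$\partial \cP_{n+1}(-2) \cap \cI_n = \emptyset$. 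Now~$\partial \cP_{n+1}(-2)$ consists of an arc of the equipotential~$\cE(2^{-(n+1)})$, two ray arcs carried by two rays~$\cR(\tfrac{1}{2} - \delta_{n+1})$ and~$\cR(\tfrac{1}{2} + \delta_{n+1})$ of~$\cI_{n+1}$ (symmetric about~$1/2$ since~$-2$ is real), and their common landing point~$c_{n+1}$ (the two rays land at a common point, since~$\partial \cP_{n+1}(-2)$ can meet~$\partial \cM$ only in finitely many points). The equipotential and ray arcs lie strictly between~$\cM$ and~$\cE(2^{-n})$, so they avoid~$\cE(2^{-n})$ and~$\partial \cM$, and the only rays they can meet are~$\cR(s)$ with~$s$ strictly between~$\tfrac{1}{2} - \delta_{n+1}$ and~$\tfrac{1}{2} + \delta_{n+1}$. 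Thus the first assertion reduces to two combinatorial facts about the angles~$\tfrac{1}{2} \pm \delta_{n+1}$: first, that~$\delta_{n+1} = \tfrac{1}{3 \cdot 2^{n+1}}$, a direct computation with~$2^{n+1} t \equiv \pm \tfrac{1}{3} \pmod 1$ on~$[1/3,2/3]$, which forces both that no element of~$J_{n+1}$ (a fortiori none of~$J_n$) lies strictly between~$\tfrac{1}{2} - \delta_{n+1}$ and~$\tfrac{1}{2} + \delta_{n+1}$, and that~$\tfrac{1}{2} \pm \delta_{n+1} \notin J_n$ because~$2^n \cdot \tfrac{1}{3 \cdot 2^{n+1}} = \tfrac{1}{6}$ is not congruent to~$1/3$ or~$2/3$ modulo~$1$ (this uses~$n \ge 1$, so that~$2^{n-1} \in \Z$); and second, that the only parameter rays landing at~$c_{n+1}$ are~$\cR(\tfrac{1}{2} - \delta_{n+1})$ and~$\cR(\tfrac{1}{2} + \delta_{n+1})$, so that~$c_{n+1}$ is neither~$-3/4$ nor a landing point of any~$\cR(t)$ with~$t \in J_n$. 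The first fact is elementary; the second --- that~$c_{n+1}$ is a ``two\nobreakdash-ray'' point of~$\partial \cM$ --- is where the combinatorics specific to the parameter~$-2$ enters, and can be read off from the binary expansions of~$\tfrac{1}{2} \pm \delta_{n+1}$ (compare~\cite{Roe00,CorRiv13}). Granting both, $\partial \cP_{n+1}(-2) \cap \cI_n = \emptyset$, and the first assertion follows.

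The soft inclusions, the holomorphic motions, and the connectedness argument are all routine; I expect the main obstacle to be the combinatorial core of the first assertion --- identifying the two rays that cut off~$\cP_{n+1}(-2)$, checking that they are new at generation~$n+1$, and checking that their common landing point carries nothing from generation~$n$ --- together with making the ``locally constant label'' in the second assertion genuinely rigorous, that is, setting up the holomorphic motion of the puzzle graphs and the resulting persistence of the puzzle structure away from~$\cI_n$; this last is standard in the Yoccoz\nobreakdash-puzzle literature but must be carried out carefully.
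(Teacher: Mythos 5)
You should first note that the paper does not prove this lemma at all: it is quoted from~\cite{CorRiv13} (Lemma~3.3 there), whose proof rests on Roesch's correspondence between para-puzzle pieces and the puzzle pieces containing the critical value~\cite{Roe00}, i.e.\ essentially on the machinery you are trying to rebuild by hand. Your outline follows the same general strategy, and the elementary angle computations are correct ($\delta_{n+1}=\tfrac{1}{3\cdot 2^{n+1}}$, no angle of~$J_{n+1}$ strictly between~$\tfrac12\pm\delta_{n+1}$, and~$\tfrac12\pm\delta_{n+1}\notin J_n$ since~$2^n\delta_{n+1}=\tfrac16$). But the two places you yourself flag as ``the main obstacle'' are not incidental: they are exactly where the content of the lemma lies, and as written both are genuine gaps. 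For assertion~2, the equivalence ``$c\in I_{c,n}$ iff $c\in\cI_n$'' is immediate from $\Phi(c)=\varphi_c(c)$ only on the escaping locus; the graph~$I_{c,n}$ also contains the landing points (the preimages of~$\alpha(c)$), so ruling out that some Misiurewicz parameter of~$\cP_n(-2)$ has its critical value on the graph requires the Douady--Hubbard landing correspondence for preperiodic parameter rays (equivalently, the holomorphic motion of the closed graphs, landing points included, over the para-puzzle piece). Without that, the ``locally constant label'' argument is not available on all of~$\cP_n(-2)$, which is precisely what assertion~2 claims.

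For assertion~1, the co-landing of~$\cR(\tfrac12-\delta_{n+1})$ and~$\cR(\tfrac12+\delta_{n+1})$ at a single point, hit by no ray of angle in~$J_n$, is indispensable and unproved. The parenthetical justification you offer --- that~$\partial\cP_{n+1}(-2)$ meets~$\partial\cM$ in finitely many points --- does not imply it: if the two rays landed at two distinct (conjugate) points, the boundary would still meet~$\partial\cM$ in finitely many points, but a closed ray arc terminating at a point does not locally disconnect a neighbourhood of its endpoint, so the component of~$\cX_{n+1}\setminus\cI_{n+1}$ containing~$-2$ would spill around the landing points past the rays~$\cR(\tfrac12\pm\delta_{n+1})$, its closure would then meet~$\cI_n$, and assertion~1 would fail. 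So the co-landing (and the fact that only these two rays land there) must actually be established, either from the landing theorem for preperiodic parameter rays together with the identification of the angles, or --- as in the cited proof --- by transferring the dynamical facts through Roesch's correspondence: once one knows that~$\cP_{n+1}(-2)$ is cut off by the parameter rays of the same angles as the dynamical rays bounding the critical-value piece, assertion~1 follows from the dynamical inclusion~$\cl{P_{c,1}(\beta(c))}\subset P_{c,0}(\beta(c))$ (recalled in~\S\ref{ss:puzzles}) pulled back by~$f_c^n$, with no separate parameter-plane ray analysis needed. In short: the skeleton is the right one, but the two steps you defer are the theorem, and one of the justifications you do give for them is invalid.
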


\subsection{The uniformly expanding Cantor set}
\label{ss:expanding Cantor set}
For a parameter~$c$ in~$\cP_3(-2)$, the maximal invariant set~$\Lambda_c$ of~$f_c^3$ in~$P_{c, 1}(0)$ plays an important role in  the definition of the combinatorics of the quadratic-like maps used in the  proof of Theorem~\ref{t:temperature dependence}.
We recall here its definition using certain puzzle pieces that we also use in the next section.

Fix~$c$ in~$\cP_3(-2)$.
There are precisely~2 connected components of~$f_c^{-3}(P_{c, 1}(0))$ contained in~$P_{c, 1}(0)$ that we denote by~$Y_c$ and~$\tY_c$.
The closures of these sets are disjoint and contained in~$P_{c, 1}(0)$.
The sets~$Y_c$ and~$\tY_c$ are distinguished by the fact that~$Y_c$ contains in its boundary the common landing point of the external rays~$R_c(7/24)$ and~$R_c(17/24)$, denoted~$\gamma(c)$, and that~$\tY_c$ contains in its boundary the common landing point of the external rays~$R_c(5/24)$ and~$R_c(19/24)$.
The map~$f_c^3$ maps each of the sets~$Y_c$ and~$\tY_c$ biholomorphically to~$P_{c, 1}(0)$.
Thus, if we put
\[
\begin{array}{cccl}
  g_c & : Y_c \cup \tY_c & \to & P_{c,1}(0)\\
      &    z & \mapsto & g_c(z) \= f_c^{3}(z),
 \end{array}
\]
then
$$ \Lambda_c = \bigcap_{n\in \N} g_c^{-n}(\cl{P_{c,1}(0)}). $$

\subsection{Parameters}
\label{ss:Parameters}
The combinatorics of the quadratic-like maps that  we use to prove Theorem~\ref{t:temperature dependence} are modeled  from a subset  of the quadratic family introduced in~\cite[Proposition~3.1]{CorRiv13}.
In this subsection we recall the definition of this parameter set.

Given an integer~$n \ge 3$, let~$\cK_n$ be the set of all those real
parameters~$c<0$ such that 
$$ f_c(c) > f_c^2(c) > \cdots > f_c^{n-1}(c) > 0
\quad \text{and} \quad
f_c^n(c) \in \Lambda_c. $$
Note that for a parameter~$c$ in~$\cK_n$, the critical point of~$f_c$ cannot be
asymptotic to a non-repelling periodic point.
This implies that all the periodic points of~$f_c$ in~$\C$ are hyperbolic
repelling and therefore that~$K_c = J_c$, see~\cite{Mil06}.
On the other hand, we have~$f_c(c) > c$ and the interval~$I_c = [c, f_c(c)]$ is
invariant by~$f_c$.
This implies that~$I_c$ is contained in~$J_c$ and hence that for every real
number~$t$ we have~$P_c^{\R}(t) \le P_c(t)$.
Note also that~$f_c|_{I_c}$ is not renormalizable, so~$f_c$ is topologically
exact on~$I_c$, see for example~\cite[Theorem~III.4.1]{dMevSt93}.

Since for~$c$ in~$\cK_n$ the critical point of~$f_c$ is not periodic, for every integer~$k \ge 0$ we have~$f_c^{n + 3k}(c) \neq 0$.
Thus, we can define the sequence~$\iota(c)$ in~$\{0, 1 \}^{\N_0}$ for each~$k \ge 0$ by
$$ \iota(c)_k
\=
\begin{cases}
0 & \text{ if }  f_c^{n + 3k}(c) \in Y_c; \\
1  & \text{ if } f_c^{n + 3k}(c) \in \widetilde{Y}_c.
\end{cases} $$

The following proposition without the statement about the homeomorphism was proved in~\cite[Proposition~3.1]{CorRiv13}.
For the proof of the homeomorphism property see~\cite[Proposition~2.3]{CorRiv1708}.
\begin{prop}
\label{p:ps}
For each integer~$n \ge 3$, the set~$\cK_n$ is a compact subset of
$$ \cP_n(-2) \cap (-2, -3/4), $$
and the function~$\iota \colon \cK_n \to \{0, 1\}^{\N_0}$ is homeomorphism.
Finally, for each~$\delta > 0$ there is~$n_0 \ge 3$ such that for each
integer~$n \ge n_0$ the set~$\cK_n$ is contained in the interval~$(-2, -2 +
\delta)$.
\end{prop}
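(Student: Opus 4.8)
The plan is to prove this by the Yoccoz para-puzzle method, treating it as a combinatorial rigidity statement and exploiting that for every~$c \in \cK_n$ the critical orbit of~$f_c$ eventually enters the uniformly hyperbolic Cantor set~$\Lambda_c$ without returning near the critical point (note~$0 \notin \Lambda_c$: the point~$0$ is a critical point of~$g_c = f_c^3$, whereas~$g_c$ is injective on each of~$Y_c$ and~$\tY_c$, and~$\overline{Y_c} \cup \overline{\tY_c}$ is a compact subset of~$P_{c, 1}(0)$). This ``Misiurewicz-type'' feature will make the distortion and shrinking estimates below uniform in~$c \in \cK_n$. The key initial observation is that, since~$f_c^n(c) \in \Lambda_c$ and~$f_c^{n + 3k}(c) = g_c^k(f_c^n(c))$ for~$c \in \cK_n$, the sequence~$\iota(c)$ is exactly the symbolic coding of the point~$f_c^n(c)$ under the full shift~$g_c \colon Y_c \cup \tY_c \to P_{c, 1}(0)$; and since~$\overline{Y_c}, \overline{\tY_c}$ are disjoint and compactly contained in~$P_{c, 1}(0)$, the inverse branches of~$g_c$ are uniform contractions for the hyperbolic metric of~$P_{c, 1}(0)$, so for each fixed~$c$ this coding is a homeomorphism of~$\Lambda_c$ onto~$\{0, 1\}^{\N_0}$.

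For the location, I would first note that the condition~$f_c^n(c) \in \Lambda_c$ already forces~$c \in \cP_3(-2) \subset \cW$, hence~$c$ is real with~$c < -3/4$; and~$c > -2$, since for~$c \le -2$ one has~$f_c(c) \ge \beta(c)$, so the critical orbit lies in~$[\beta(c), +\infty)$ and cannot satisfy~$f_c(c) > f_c^2(c)$. To obtain~$c \in \cP_n(-2)$ I would use the classical identification of the dynamical and parameter external rays via~$\Phi(c) = \varphi_c(c)$: the hypotheses~$f_c(c) > \dots > f_c^{n-1}(c) > 0$ and~$f_c^n(c) \in P_{c, 1}(0)$ say exactly that the critical orbit of~$f_c$ runs through the Yoccoz puzzle, up to depth~$n$, along the itinerary of~$f_{-2}$ (staying in~$P_{c, 0}(\beta(c))$ and entering~$P_{c, 1}(0)$ at time~$n$), which transported to the parameter plane through the rays~$\cR(t)$, $t \in J_n$, is precisely the defining condition of~$\cP_n(-2)$; Lemma~\ref{l:auxiliary para-puzzle pieces} is used here. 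For compactness, I would transport, for each word~$w \in \{0, 1\}^j$, the dynamical puzzle piece~$Y_c(w) \subset P_{c, 1}(0)$ coding~$w$ to a parameter puzzle piece~$\mathbf{P}_n(w) \subset \cP_n(-2)$, and verify that~$\cK_n = \bigcap_{j \ge 0} \bigcup_{w \in \{0, 1\}^j} \overline{\mathbf{P}_n(w)}$, a decreasing intersection of compact sets.

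For the homeomorphism, continuity of~$\iota$ is immediate: each coordinate~$c \mapsto \iota(c)_k$ is locally constant on~$\cK_n$, since~$\overline{Y_c}$ and~$\overline{\tY_c}$ are disjoint, vary continuously with~$c$, and contain between them the point~$f_c^{n + 3k}(c)$. Bijectivity is where the work lies: it follows once~$\diam \mathbf{P}_n(w) \to 0$ as~$|w| \to +\infty$ (uniformly in the choice of~$w$), for then each element of~$\{0, 1\}^{\N_0}$ determines, through a nested sequence~$\overline{\mathbf{P}_n(w_0)} \supset \overline{\mathbf{P}_n(w_1)} \supset \dots$, a unique common parameter, which lies in~$\cK_n$ and realises the given coding. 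To prove this shrinking I would compare the motion of the parameter with the motion of the critical value through the dynamical puzzle: the transversality principle for Misiurewicz-type parameters (agreement to first order of the parameter and dynamical B\"ottcher coordinates at the critical value) shows that~$\mathbf{P}_n(w)$ and~$Y_c(w)$ are comparable up to bounded distortion, while~$Y_c(w)$ shrinks geometrically because~$g_c$ is uniformly expanding near~$\Lambda_c$. Then~$\iota$ is a continuous bijection from the compact set~$\cK_n$ onto the Hausdorff space~$\{0, 1\}^{\N_0}$, hence a homeomorphism. I expect this shrinking estimate --- carried out with distortion bounds uniform over~$c \in \cK_n$, which is exactly where the uniform hyperbolicity of~$\Lambda_c$ and the separation of the critical orbit from~$0$ enter --- to be the main obstacle.

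Finally, the inclusion~$\cK_n \subset (-2, -2 + \delta)$ for large~$n$ I would prove by a direct argument near the repelling fixed point~$\beta(c)$, independently of the para-puzzle. For~$c \in \cK_n$ set~$y_j \= \beta(c) - f_c^{j + 1}(c)$; using~$\beta(c)^2 + c = \beta(c)$ one gets~$y_{j + 1} = y_j\bigl(2\beta(c) - y_j\bigr)$, and since the chain forces~$0 < f_c^{n-1}(c) < \dots < f_c(c) < \beta(c)$, the ratio~$y_{j + 1}/y_j$ lies in~$(\beta(c), 2\beta(c))$ for~$j = 0, \dots, n - 2$. As~$f_c^n(c) \in \Lambda_c$ is at distance at least~$1$ and at most~$2\beta(c)$ from~$\beta(c)$ (because~$\Lambda_c \subset P_{c, 1}(0)$ and~$\alpha(c) + \beta(c) = 1$), and~$\beta(c) \in (3/2, 2)$, this gives~$0 < y_0 \le 8\,(2/3)^n$. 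Since~$y_0 = \beta(c) - c^2 - c$ is real-analytic in~$c$, vanishes at~$c = -2$, and has nonzero derivative there (equal to~$8/3$), it follows that~$|c + 2| \le C\,(2/3)^n$ for~$c \in \cK_n$ near~$-2$, while a compactness argument on~$[-2, -3/4]$ excludes all other~$c \in \cK_n$ once~$n$ is large; hence~$\cK_n \subset (-2, -2 + C(2/3)^n)$, which yields the claim.
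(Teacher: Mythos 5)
A preliminary remark: the paper does not prove Proposition~\ref{p:ps} itself; it quotes the compactness/location statements from \cite[Proposition~3.1]{CorRiv13} and the homeomorphism property from \cite[Proposition~2.3]{CorRiv1708}, so your proposal can only be measured against those para-puzzle arguments, whose general strategy you do follow. Your handling of the elementary parts is correct: the exclusion of $c \le -2$, the observation that the real inequalities are equivalent to the critical value having the puzzle combinatorics of $-\beta(c)$ up to depth~$n$ (so that, on the real line, the decreasing chain is automatic because $f_c(x) < x$ on $(\alpha(c), \beta(c))$), and the final estimate via $y_{j+1} = y_j\bigl(2\beta(c) - y_j\bigr)$ together with $y_{n-1} \ge \alpha(c) + \beta(c) = 1$, which gives $\cK_n \subset (-2, -2 + C(2/3)^n)$; this last argument is a nice self-contained alternative to deducing the statement from the shrinking of the pieces $\cP_n(-2)$.

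The genuine gap sits exactly at the heart of what is to be proved, namely that $\iota$ is a bijection. Your argument rests on two claims you do not establish. First, that for every finite word $w$ the parameters whose critical value follows the combinatorics coded by $w$ form a nonempty para-puzzle piece $\mathbf{P}_n(w)$ with $\cl{\mathbf{P}_n(w')} \subset \mathbf{P}_n(w)$ for $w$ a proper prefix of $w'$. This is the nontrivial direction of the dynamical-to-parameter correspondence (the converse of Lemma~\ref{l:auxiliary para-puzzle pieces}(2)); the identity $\Phi(c) = \varphi_c(c)$ alone does not give it, and without the Roesch/Douady--Hubbard degree-one (argument-principle) statement, or a real full-family argument, your surjectivity is circular, since nonemptiness of these pieces \emph{is} the content of surjectivity. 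Strict nesting of closures is also not automatic: since $2^2 \cdot (1/3) = 1/3 \pmod 1$, one has $J_N \subset J_{N+2}$, so graphs of different depths can share rays, and you must transport the compact containment $\cl{Y_c} \cup \cl{\tY_c} \subset P_{c,1}(0)$ to parameter space to get it. Second, the shrinking $\diam \mathbf{P}_n(w) \to 0$, which carries both injectivity and the well-definedness of the limit parameter, is only waved at via ``transversality at Misiurewicz-type parameters''; agreement of the parameter and dynamical B\"ottcher coordinates is an exact zeroth-order identity off $\cM$ and does not by itself yield the bounded-distortion comparability of $\mathbf{P}_n(w)$ with the dynamical piece $Y_c(w)$ --- you acknowledge this is ``the main obstacle'', but it is precisely the step a proof must supply (in \cite{CorRiv1708} this is where the actual work of the homeomorphism statement lies). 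Two smaller, fixable points: the parameter obtained from a nested sequence of closures must be shown to be \emph{real} (use that each $\mathbf{P}_n(w)$ is invariant under complex conjugation, the bounding angles coming in pairs $t$, $1-t$), and your identity $\cK_n = \bigcap_{j} \bigcup_{w} \cl{\mathbf{P}_n(w)}$ needs the boundary cases (critical orbit landing on $\alpha(c)$, $\talpha(c)$, $0$, or on rays) ruled out, which your strict-inequality analysis does essentially provide.
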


\section{Main results}
\label{s:main results}
In this section we state Theorem~\ref{t:temperature dependence}, and prove the Sensitive Dependence at Positive Temperature assuming this result.
We also state a more precise version of the High-Order Phase Transitions Theorem as Theorem~\ref{t:pressure estimates}.

In~\S\ref{ss:uniform families} we recall some generalities about uniform families of quadratic-like maps.  In~\S\ref{ss:conformal measures} we recall a general result about conformal measures and in~\S\ref{ss:inducing scheme} the inducing scheme that we use to estimate the pressure function and the conformal measures.
Then, we state Theorems~\ref{t:pressure estimates} and~\ref{t:temperature dependence} in~\S\ref{ss:main results}, and we assume this last result in~\S\ref{ss:proof of positive sensitive dependence} to prove the Sensitive Dependence at Positive Temperature.

\subsection{Uniform families of quadratic-like maps}
\label{ss:uniform families}
A quadratic-like map~$f \colon U \to V$ is \emph{normalized}, if its unique critical point is~$0$, and if~$D^2f(0) = 2$.
For such a map~$f$ there is a holomorphic function~$R_f \colon U \to \C$ such that for~$w$ in~$U$ we have
$$ f(w) = f(0) + w^2 + w^3 R_f(w). $$
Note that~$f$ is uniquely determined by its critical value~$f(0)$, and the function~$R_f$.

By the straightening theorem of Douady and Hubbard~\cite{DouHub85a}, for every quadratic-like map~$f \colon U \to V$ there is~$c$ in~$\C$ and a quasi-conformal homeomorphism~$h : \C \to \C$ that conjugates the quadratic polynomial~$f_c$ to~$f$ on a neighborhood of~$J_c$.
In the case~$f$ is real, $c$ is real, and~$h$ can be chosen so that it commutes with the complex conjugation.
In all the cases, the quasi-conformal homeomorphism~$h$ can be chosen to be holomorphic on a neighborhood of infinity, and tangent to the identity there.

Put
$$ \cX \= \{ c \in \C \mid G_c(c) \le 1 \}
\quad \text{and} \quad
\hcX \= \{ c \in \C \mid G_c(c) \le 2 \}, $$
and for~$c$ in~$\C$, put
$$ X_c \= \{ z \in \C \mid G_c(z) \le 1 \}
\quad \text{and} \quad
\hX_c \= \{ z \in \C \mid G_c(z) \le 2 \}. $$
Note that~$X_c$ is contained in the interior of~$\hX_c$, and that
$$ \cX = \{ c \in \C \mid c \in X_c \}
\quad \text{and} \quad
\hcX = \{ c \in \C \mid c \in \hX_c \}. $$
\begin{defi}[Uniform family of quadratic-like maps]
A family~$\sF$ of normalized quadratic-like maps is \emph{uniform}, if there are constants~$K\ge 1$ and~$R>0$, such that for each~$f$ in~$\sF$ there are~$c(f)$ in~$\cX$ and a $K$\nobreakdash-quasi-conformal homeomorphism~$h_f$ of~$\C$ satisfying the following properties.
\begin{enumerate}
\item[1.]
The homeomorphism~$h_f$ conjugates~$f_{c(f)}$ on~$\hX_{c(f)}$ to~$f$ on~$h_f(\hX_{c(f)})$.
Furthermore, if~$f$ is real, then~$h_f$ commutes with the complex conjugation.
\item[2.]
The set~$\hX_{c(f)}$ is contained in~$B(0, R)$, and the homeomorphism~$h_f$ is holomorphic on~$\C \setminus \cl{B(0, R)}$, and it is tangent to the identity at infinity.
\end{enumerate}
\end{defi}

Note that property~1 implies that~$h_f(0) = 0$.

Let~$\sF$ be a uniform family of quadratic-like maps.
For each~$f$ in~$\sF$ put
$$ X_f \= h_f(X_{c(f)})
\quad \text{and} \quad
\hX_f \= h_f(\hX_{c(f)}). $$
By the definition of uniform family, the puzzle pieces of~$f_{c(f)}$ can be push-forward to~$X_f$ by~$h_f$.
We call to these sets the \emph{puzzle pieces of~$f$}.
We say that a puzzle piece of $f$ has \textit{depth} $n$  if it is the push-forward of a puzzle piece of $c(f)$ with depth~$n$.
The puzzle piece of depth~$n$ of~$f$ containing~$w$ is denoted~$P_{f,n}(w)$.
Thus, we have
$$
P_{f,n}(w) \= h_f(P_{c(f),n}(h_f^{-1}(w))).
$$
 Set 
$$\beta(f) \= h_f(\beta(c(f)))
\quad \text{and} \quad
\tbeta(f) \= h_f(-\beta(c(f))).$$

For every  integer $n\ge 0$, put
$$\cP_n(\sF) \= \{f \in \sF \mid c(f) \in \cP_n(-2)\},$$  
and for
$n \ge 3$, put 
$$\cK_n(\sF) \= \{f \in \sF \mid c(f) \in \cK_n\}.$$  

Moreover, for~$f$ in~$\cP_3(\sF)$ put
\begin{displaymath}
  Y_f \= h_f(Y_{c(f)}),
  \text{ and }
  \tY_f \= h_f(\tY_{c(f)}),
\end{displaymath}
and let~$g_f \colon h_f(Y_{c(f)}\cup \tY_{c(f)}) \to P_{f, 1} (0)$ be defined by~$g_f \= h_f^{-1} \circ g_{c(f)}\circ h_f$.
Moreover, let~$p(f)$ and~$\wtp(f)$ be the unique fixed point of~$g_f$ in~$Y_f$ and $\tY_f$, respectively, and denote by~$\whp(f)$ the unique fixed point of~$g_f^2$ in~$\tY_f$ that is different from~$\wtp(f)$; it is a periodic point of~$g_f$ of minimal period~$2$.
Furthermore, denote by
\begin{displaymath}
  \cO^+(f) \= \left\{ f^j(p^+(f)) \mid j \in \{0, 1, 2 \} \right\}
  \text{ and }
  \cO^-(f) \= \left\{ f^j(p^-(f)) \mid j \in \{0, 1, \dots, 5 \} \right\}
\end{displaymath}
the orbits of~$p^+(f)$ and~$p^-(f)$ under~$f$, respectively.

For each integer~$n \ge 5$, and each~$f$ in~$\cK_n(\sF)$, put $\iota(f)\=\iota(c(f))$, see~\S\ref{ss:Parameters}, and note that for every integer $j\ge 0$ we have  
$$
\iota(f)_j \= \begin{cases}
0 & \text{ if } f^{n+1+3j}(0) \in Y_f;\\
1 & \text{ if } f^{n+1+3j}(0) \in \tY_f.
\end{cases} $$

Finally, for every~$f$ in~$\sF$ such that~$c(f)$ is real and belongs to~$[-2,0)$, denote by~$I(f)$ the image under~$h_f$ of the interval $[c(f), f_{c(f)}(c(f))]$.
Observe that $f(I(f))=I(f)$.
A quadratic-like map~$f$ in~$\sF$ is \emph{essentially topologically exact} if~$c(f)$ is in $[-2,0)$, and if~$f|_{I(f)}$ is topologically exact.
For such a map~$f$ we consider both, the map~$f|_{I(f)}$, and the complex map~$f$ acting on its Julia set~$J(f)$.
We also define~$\sM_f^{\R}$, $P_f^{\R}$, and \emph{equilibrium states} or \emph{Geometric Gibbs states} of~$f|_{I(f)}$ as in the introduction. 
In the case~$f$ is real, the definitions above coincide with those in the introduction.

\subsection{Conformal measures}
\label{ss:conformal measures}
Throughout this section we fix a uniform family of quadratic-like maps~$\sF$.

Let~$n \ge 4$ be an integer and~$f$  in~$\cK_n(\sF)$.
Given~$t > 0$ and a real number~$p$, a measure~$\mu$ is \emph{$(t, p)$\nobreakdash-conformal for~$f|_{I(f)}$} (resp.~$f$), if for every subset~$U$ of~$I(f)$ (resp.~$J(f)$) on which~$f|_{I(f)}$ (resp.~$f$) is injective we have
\begin{equation*}
\mu(f|_{I(f)}(U))
=
\exp(p) \int_U |Df|^{t} \dd \mu
\left( \text{resp. } \mu(f(U)) = \exp(p) \int_U |Df|^{t} \dd \mu \right).
\end{equation*}
In the case where~$P_f^{\R}(t) = 0$ (resp. $P_f^{\C}(t) = 0$), a $(t, 0)$\nobreakdash-conformal measure is simply called \emph{conformal}.

On the other hand, the \emph{conical} or \emph{radial Julia set of~$f|_{I(f)}$} (resp.~$f$) is the set of all points~$x$ in~$I(f)$ (resp.~$J(f)$) for which the following property holds: There exists~$r > 0$ and an unbounded sequence of positive integers~$(n_j)_{j = 1}^{+ \infty}$, such that for every~$j$ the map~$f|_{I(f)}^{n_j}$ (resp.~$f^{n_j}$) maps a neighborhood of~$x$ in~$I(f)$ (resp.~$J(f)$) diffeomorphically to~$B(f^{n_j}(x), r)$.

In the case where~$f$ is a real quadratic map, the following is~\cite[Proposition~5.1]{CorRiv15b}.
The proof applies without change to quadratic-like maps in~$\cK_n(\sF)$.

\begin{prop}[\cite{CorRiv15b}, Proposition~5.1]
\label{p:conformal measures}
Let~$n \ge 4$ be an integer, $f$ a map in~$\cK_n(\sF)$, and let~$t > 0$ and~$p$ in~$\R$ be given.
Then there is at most one $(t, p)$\nobreakdash-conformal probability measure of~$f|_{I(f)}$ (resp.~$f$) supported on~$I(f)$ (resp.~$J(f)$).
If such a measure~$\mu$ exists, then~$p \ge P_f^{\R}(t)$ 
(resp. $p \ge P_f^{\C}(t)$),
and~$\mu$ is either supported on the backward orbit of~$z = 0$ and dissipative, or~$\mu$ is nonatomic
and supported on the conical Julia set of~$f|_{I(f)}$ (resp.~$f$).
Furthermore, the former case holds precisely when the following series converges:
\begin{multline}
  \label{e:Poincare series}
 \sum_{j = 1}^{+ \infty} \exp(- j p) \sum_{y \in f|_{I(f)}^{-j}(0)} |Df^j (y)|^{-t}  
\\ 
\left( \text{resp. } \sum_{j = 1}^{+ \infty} \exp(- j p) \sum_{y \in f^{-j}(0)} |Df^j (y)|^{-t} \right).
\end{multline}
\end{prop}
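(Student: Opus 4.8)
The plan is to adapt the Patterson--Sullivan type argument of~\cite[Proposition~5.1]{CorRiv15b} to the quadratic-like setting, using that every~$f$ in~$\cK_n(\sF)$ is a straightening of a quadratic polynomial~$f_{c(f)}$ with~$c(f)$ in~$\cK_n$, so that the combinatorial structure (puzzle pieces, the fact that~$K(f) = J(f)$, topological exactness on~$I(f)$, and the non-recurrence~$f^{n-1}(c) > 0$ of the postcritical set) is identical to the polynomial case. First I would establish \emph{uniqueness}: if~$\mu_1, \mu_2$ are two $(t,p)$-conformal probability measures, then on any puzzle piece on which a suitable iterate of~$f$ is injective one can compare the two measures via the conformal derivative cocycle $\exp(-jp)|Df^j|^{-t}$, and topological exactness of~$f|_{I(f)}$ (resp.\ the fact that~$J(f)$ is the closure of repelling cycles and that~$f$ is topologically exact on~$J(f)$) lets one pull back to arbitrarily small scales to conclude~$\mu_1 = \mu_2$ by a standard Vitali/bounded-distortion argument, provided neither measure charges the grand orbit of~$0$. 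The atomic case is handled separately: a $(t,p)$-conformal measure with an atom must, by the conformality relation applied along preimages, be supported on~$\bigcup_{j \ge 0} f^{-j}(0)$, and its total mass is then exactly the Poincaré series in~\eqref{e:Poincare series}; so such a measure exists (and is unique after normalization) iff that series converges, and in that case it is purely atomic and dissipative.

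Next I would prove the dichotomy and the bound~$p \ge P_f^{\R}(t)$ (resp.\ $p \ge P_f^{\C}(t)$) in the nonatomic case. The inequality~$p \ge P_f(t)$ follows from the variational principle together with a standard argument bounding the measure of dynamical balls from below: a $(t,p)$-conformal measure gives every $n$-cylinder mass comparable to $\exp(-np)\|Df^n\|^{-t}$ up to bounded distortion (here I use the uniform bounded-distortion estimates on puzzle pieces available for uniform families, and the Collet--Eckmann / "goodness" control at the critical value coming from~$c(f) \in \cK_n$), which via the entropy--pressure inequality yields~$p \ge P_f(t)$. For the support statement, decompose~$J(f)$ (resp.\ $I(f)$) into the conical (radial) part and its complement; on the complement, orbits spend an increasing proportion of time near the postcritical set, and conformality forces the $\mu$-measure of the corresponding shrinking neighborhoods to be summable, so $\mu$ of the non-conical part is zero once $\mu$ is nonatomic. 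Finally, nonatomicity itself follows because an atom at a non-precritical point would, by conformality along a recurrent itinerary, contradict finiteness of~$\mu$; and an atom on the grand orbit of~$0$ puts us in the dissipative case already treated.

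The main obstacle I expect is the careful bookkeeping of distortion near the critical point: unlike for a single quadratic polynomial, here one must quote the \emph{uniform} distortion and expansion estimates for the family~$\sF$ (the Collet--Eckmann constants and the goodness constants of~\cite[Definition~2.2]{BalBenSch15}, cf.\ Proposition~\ref{p:transporting Peierls}) to make the comparison of conformal measures on deep puzzle pieces quantitative and scale-independent. Concretely, the delicate point is showing that the derivative cocycle along a chain of puzzle pieces returning close to~$0$ distorts the measure by a bounded factor, which requires the Koebe-type control on pullbacks through pieces that avoid a definite neighborhood of the critical value for a definite number of steps — exactly the combinatorial information encoded in~$\cK_n$. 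Once this uniform distortion is in hand, the remaining steps are formally identical to the proof of~\cite[Proposition~5.1]{CorRiv15b}, and I would simply remark that that proof "applies without change", as the statement already indicates. I would not reproduce the routine measure-theoretic estimates in detail.
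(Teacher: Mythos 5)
Your proposal ends up exactly where the paper does: the paper's entire proof is the remark that this statement is \cite[Proposition~5.1]{CorRiv15b} in the real quadratic case and that the proof applies without change to quadratic-like maps in~$\cK_n(\sF)$ (via the straightening, which preserves the relevant combinatorial and geometric structure), and your closing paragraph says precisely this, with your preceding sketch being a summary of the quoted Patterson--Sullivan-type argument rather than a new route. One minor caution: the uniform Collet--Eckmann and ``goodness'' constants of Proposition~\ref{p:transporting Peierls} that you invoke are neither needed for this statement nor available for every~$n \ge 4$ (that proposition only applies for~$n$ sufficiently large), but since the result is being quoted rather than re-proved this does not affect correctness.
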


\subsection{Inducing scheme and conformal measures}
\label{ss:inducing scheme}
Throughout this subsection we fix a uniform family of quadratic-like maps~$\sF$.
In this subsection we introduce the inducing scheme to estimate the geometric pressure function for maps in~$\cK_n(\sF)$.  
 
Let~$n \ge 5$ be an integer and~$f$  in~$\cK_n(\sF)$.
Put 
$$ 
\pV \= \pP = f^{-1}(P_{f, n}( \tbeta(f))) 
$$ 
and 
$$
\pD\=\{z\in \pV  \mid  f^m(z) \in V_f  \text{ for some } m\ge 1\}.
$$
 For~$w$ in~$\pD$ put~$m_f(w)\=\min\{m\in \N \mid f^m(w) \in \pV\}$, and call it the \emph{first return time of~$w$ to~$\pV$}.
The \emph{first return map to~$\pV$} is defined by
$$ \begin{array}{rcl}
F_f : \pD & \to & \pV \\
w & \mapsto & F_f(w) \= f^{m_f(w)}(w).
\end{array} $$
It is easy to see that~$\pD$ is a disjoint union of puzzle pieces; 
so each connected component of~$\pD$ is a puzzle piece.
Note furthermore that in each of these puzzle pieces~$W$, 
the return time function~$m_f$ is constant; denote the common value
of~$m_f$ on~$W$ by~$m_f(W)$.

Throughout the rest of this subsection we put $\hV_f \= P_{f,4}(0)$. 

\begin{lemm}[Uniform distortion bound, \cite{CorRiv1708}, Lemma~5.1]
\label{l:bounded distortion to nice}
There is a constant~$\Xi_1 > 1$ such that for each integer~$n \ge 5$ and
each
~$f$ in~$\cK_n(\sF)$ the following property holds: For every connected
component~$W$ of~$\pD$ the map~$F_f|_W$ is univalent and its distortion is
bounded by~$\Xi_1$.
Furthermore, the inverse of~$F_f|_W$ admits a univalent extension to~$\hV_f$
taking images in~$\pV$.
In particular, $F_f$ is uniformly expanding with respect to the hyperbolic
metric on~$\hV_f$.
\end{lemm}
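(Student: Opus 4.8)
The plan is to treat $\pF$ as the first-return map of $f$ to the puzzle piece $\pV = \pP = f^{-1}(P_{f,n}(\tbeta(f)))$, which by Lemma~\ref{l:auxiliary para-puzzle pieces} (transported by $h_f$) is the depth-$(n+1)$ puzzle piece $P_{f,n+1}(0)$ about the critical point, and to control the branches of $\pF$ by pulling back the fixed-depth piece $\hV_f = P_{f,4}(0)$ along orbits, then invoking Koebe's distortion theorem and the Schwarz--Pick lemma. The one genuinely combinatorial ingredient I would isolate at the start is the \emph{avoidance property}: the forward orbit $\{f^k(0) \mid k \ge 1\}$ is disjoint from $\hV_f$. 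Equivalently, for the polynomial $f_{c(f)}$ and $c \= c(f)$, the orbit $\{f_c^k(0) \mid k \ge 1\}$ misses $P_{c,4}(0)$: for $1 \le k \le n$ the defining inequalities of $\cK_n$ together with $f_c^n(c) \in \Lambda_c \subseteq P_{c,1}(0)$ force $f_c^k(0)$ into $P_{c,1}(\beta(c)) \cup P_{c,1}(-\beta(c))$, which is disjoint from $P_{c,1}(0) \supseteq P_{c,4}(0)$; while for $k \ge n+1$ one has $f_c^k(0) = f_c^{k-1}(c) \in f_c^{r}(Y_c \cup \tY_c)$ for some $r \in \{0,1,2\}$, a puzzle piece of depth $\le 4$ distinct from $P_{c,4-r}(0)$ because $0 \notin Y_c \cup \tY_c$ and its first two $f_c$-images, hence disjoint from $P_{c,4}(0)$. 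In particular $0 \notin \pD$.

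Granting this, the branch construction is straightforward. Fix a component $W$ of $\pD$ and put $m \= m_f(W)$. The sets $W, f(W), \dots, f^m(W)$ are puzzle pieces, and since $f^j(W) \cap \pV = \varnothing$ for $1 \le j < m$ while $0 \in \pV$, none of $f^0(W), \dots, f^{m-1}(W)$ contains the critical point. Letting $\widehat W_j$ be the component of $f^{-(m-j)}(\hV_f)$ containing $f^j(W)$, the avoidance property gives $f^{m-j}(0) \notin \hV_f$, so $0 \notin \widehat W_j$; hence each $\widehat W_j$ is a puzzle piece of depth $4 + (m-j)$ and $f^{m-j} \colon \widehat W_j \to \hV_f$ is a conformal isomorphism. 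In particular $\pF|_W = f^m|_W$ is univalent. Tracking the orbit shows that for $1 \le j \le n$ the piece $f^j(W)$ lies in a depth-$1$ puzzle piece other than $P_{f,1}(0)$, hence is disjoint from $\pV$, so $m \ge n+1$ and $\widehat W_0$ has depth $4 + m > n + 1$; as $W \subseteq \widehat W_0 \cap \pV$, nesting of puzzle pieces forces $\widehat W_0 \subseteq \pV$. Thus $\Psi_W \= (f^m|_{\widehat W_0})^{-1} \colon \hV_f \to \widehat W_0 \subseteq \pV$ is the sought univalent extension of $(\pF|_W)^{-1}$ with image in $\pV$.

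The key step, and the one I expect to be the main obstacle, is to produce a \emph{definite collar} around $\overline{\pV}$ inside $\hV_f$ whose modulus is independent of $n$ and $f$; this is what forces all the estimates down to a bounded-depth picture. Let $V_f^{+}$ be the component of $f^{-1}(P_{f,n-1}(\tbeta(f)))$ containing $0$, so that $\overline{\pV} \subseteq V_f^{+} \subseteq P_{f,5}(0) \subseteq \hV_f$. I would show $\overline{\pV} \Subset V_f^{+}$ with $\modulus(V_f^{+} \setminus \overline{\pV})$ bounded below uniformly, as follows: orbit bookkeeping again (the relevant pieces avoiding $0$) shows $f^{n-1}$ maps $P_{f,n-1}(\tbeta(f))$ conformally onto the fixed-depth piece $P_{f,0}(\beta(f))$ and restricts to a conformal isomorphism $P_{f,n}(\tbeta(f)) \to P_{f,1}(\beta(f))$, so the annulus $P_{f,n-1}(\tbeta(f)) \setminus \overline{P_{f,n}(\tbeta(f))}$ is conformally the annulus $P_{f,0}(\beta(f)) \setminus \overline{P_{f,1}(\beta(f))}$; the modulus of the latter is at least some $\modulus_0 > 0$, since $\overline{P_{f,1}(\beta(f))}$ is compactly contained in $P_{f,0}(\beta(f))$ and, by the uniform-family hypothesis, this pair is a $K$-quasiconformal image of the corresponding pair for $f_c$ with $c \in \cK_n$ lying in a fixed compact subset of $[-2,-3/4]$, on which the modulus is continuous and positive. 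A last pullback by the degree-$2$ map $f$ costs only a factor $\tfrac12$, so $\modulus(V_f^{+} \setminus \overline{\pV}) \ge \modulus_0/(2K)$.

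With the collar in place the conclusion is routine. Gr\"otzsch's inequality puts the simply connected set $\overline{\pV}$ in a fixed compact part of $V_f^{+} \cong \D$, bounding its hyperbolic diameter inside $\hV_f$ in terms of $\modulus_0$ and $K$ alone; Koebe's distortion theorem then bounds the distortion of the univalent map $\Psi_W$ on $\pV$, hence of $\pF|_W$ on $W$, by a constant $\Xi_1$ with the same dependence. Finally, $\Psi_W(\hV_f) = \widehat W_0 \subseteq \pV$ and the collar separates $\overline{\pV}$ from $\partial \hV_f$ with definite modulus, so Schwarz--Pick gives a uniform contraction factor $\lambda < 1$ for $\Psi_W \colon \hV_f \to \hV_f$; taking inverses, $\pF$ expands the hyperbolic metric of $\hV_f$ by at least $1/\lambda$ on $\pD$. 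The real content of the proof is thus concentrated in the avoidance property and in the uniform bound $\modulus\bigl(P_{f,0}(\beta(f)) \setminus \overline{P_{f,1}(\beta(f))}\bigr) \ge \modulus_0$; everything past that is Koebe and Schwarz--Pick.
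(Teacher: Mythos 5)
Your proposal is correct, and its ingredients --- the critical orbit avoiding $\hV_f=P_{f,4}(0)$, the univalent pullback of $\hV_f$ along each return branch giving the extension of $(\pF|_W)^{-1}$ with image in $\pV$, a uniform lower bound on $\modulus(\hV_f\setminus\cl{\pV})$ obtained by transporting the fixed-depth pair $P_{f,0}(\beta(f))\supset\cl{P_{f,1}(\beta(f))}$ through the conformal isomorphism $f^{n-1}$ and one degree-two pullback, and then Koebe plus Schwarz--Pick --- are essentially the intended argument. Note that this paper does not prove the lemma but quotes it from \cite[Lemma~5.1]{CorRiv1708}, whose proof follows the same nice-couple scheme, so your blind reconstruction matches the approach of the cited source.
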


Denote by~$\pfD$ the collection of connected components of~$\pD$ and
if $c(f)$ is real denote by~$\pfD^{\R}$ the sub-collection of~$\pfD$ of those sets intersecting~$I(f)$.
For each~$W$ in~$\pfD$ denote by~$\phi_W : \hV_f \to \pV$ 
the extension of~$f|_{W}^{-1}$ given by Lemma~\ref{l:bounded distortion to nice}. Given an
integer~$\ell \ge 1$ we denote by~$E_{f,\ell}$ (resp. $E_{f,\ell}^{\R}$) the
set of all words of length~$\ell$ in the alphabet~$\pfD$ (resp. $\pfD^{\R}$).
Again by~Lemma~\ref{l:bounded distortion to nice}, for each integer~$\ell \ge 1$ and
each word~$W_1 \cdots W_\ell$  in~$E_{f,\ell}$ the composition
$$ \phi_{W_1 \cdots W_\ell} = \phi_{W_1} \circ \cdots \circ \phi_{W_\ell} $$
is defined on~$\hV_f$.
We also put
$$ m_f(W_1 \cdots W_\ell) = m_f(W_1) + \cdots + m_f(W_\ell). $$

For~$t, p$ in~$\R$ and an integer~$\ell \ge 1$ put
$$ Z_{\ell}(t, p)
\=
\sum_{\underline{W} \in E_{f,\ell}} \exp(-m_f(\underline{W}) p) \left( \sup \{
|D\phi_{\underline{W}}(z) | \mid z \in \pV \} \right)^t $$
and
$$ Z_{\ell}^{\R}(t, p)
\=
\sum_{\underline{W} \in E_{f,\ell}^{\R}} \exp(-m_f(\underline{W}) p) \left(
\sup \{ |D\phi_{\underline{W}}(z) | \mid z \in \pV \} \right)^t. $$
For a fixed~$t$ and~$p$ in~$\R$ 
the sequence
$$ \left(\frac{1}{\ell} \log Z_{\ell}(t, p) \right)_{\ell = 1}^{+ \infty}
\left( \text{resp. } \left(\frac{1}{\ell}  \log Z_{\ell}^{\R}(t, p) \right)_{\ell = 1}^{+ \infty} \right) $$
converges to the pressure function of~$F_f$ (resp. $F_{f}|_{\pD \cap I(f)}$) for the
potential~$- t \log |D\pF| - p m_f$; 
we denote it by~$\psP(t, p)$ (resp. $\psP^{\R}(t, p)$).
On the set where it is finite, the function~$\psP$ (resp.~$\psP^{\R}$) so
defined is strictly decreasing in each of its variables.

Given~$t > 0$ and~$p$ in~$\R$, a finite measure~$\tmu$ on~$\C$ that is supported on the maximal invariant set of~$F_{f}|_{D_f \cap \R}$ (resp.~$F_f$) is \emph{$(t, p)$\nobreakdash-conformal for~$F_f$}, if for every~$W$ in~$\fD^{\R}_f$ (resp. $\fD_f$), and every Borel subset~$U$ of~$W \cap \R$ (resp.~$W$), we have
\begin{displaymath}
\tmu(F_f(U))
=
\exp(p m_f(W)) \int_U |DF_f|^t \dd \tmu.
\end{displaymath}
Note that in this case we have
\begin{multline}
  \label{eq:6}
    \exp(- p m_f(W)) \inf_{z \in W} |DF_f (z) |^{-t}
  \le
  \tmu(W)
  \\ \le
  \exp(- p m_f(W)) \sup_{z \in W} |DF_f (z) |^{-t}.
\end{multline}
\begin{prop}[\cite{CorRiv1708}, Proposition~5.2]
  \label{p:equilibria}
  Let~$n \ge 5$ be an integer, $f$ in~$\cK_n(\sF)$, and~$t > 0$ such that
\begin{equation}
  \label{e:Bowen formula}
\sP^{\R}_f(t, P_f^{\R}(t)) = 0
\left( \text{resp. }
\sP_f(t, P_f(t)) = 0 \right).
\end{equation}
Then there is a~$(t, P^\R_f(t))$\nobreakdash-conformal (resp.~$(t, P_f(t))$\nobreakdash-conformal) probability measure~$\tmu$ for~$F_f$, and there is a probability measure~$\trho$ that is invariant by~$F_f$, absolutely continuous with respect to~$\tmu$, and whose density satisfies
\begin{equation}
  \label{e:density bound}
  \Xi_1^{-t} \le \frac{\dd \trho}{\dd \tmu} \le \Xi_1^t.
\end{equation}
If in addition
\begin{multline}
  \label{e:time integrability}
\sum_{W \in \fD^\R} m_{f}(W) \cdot \exp( - m_{f}(W) P^\R_f(t)) \sup_{w \in W \cap \R} |DF_f(w)|^{-t}
\\ \left( \text{resp. }
\sum_{W \in \fD} m_{f}(W) \cdot \exp( - m_{f}(W) P_f(t)) \sup_{w \in W} |DF_f(w)|^{-t}
\right)
\end{multline}
is finite, then the measure
$$ \hrho \= \sum_{W \in \fD^{\R}} \sum_{j = 0}^{m_{f}(W) - 1} (f^j)_* \left( \trho|_{W \cap \R} \right)
\left( \text{resp. }
\sum_{W \in \fD} \sum_{j = 0}^{m_{f}(W) - 1} (f^j)_* \left( \trho_t|_W \right)
\right) $$
is finite and the probability measure proportional to~$\hrho$ is the unique equilibrium state of~$f|_{I(f)}$ (resp.~$f|_{J(f)}$) for the potential~$-t \log |Df|$. 
\end{prop}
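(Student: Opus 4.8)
The plan is to treat Proposition~\ref{p:equilibria} as an instance of the thermodynamic formalism of the uniformly expanding, full-branch inducing scheme $\pF \colon \pD \to \pV$ furnished by Lemma~\ref{l:bounded distortion to nice}, carried out at the borderline exponent where the induced pressure vanishes. I will write the argument for the complex case $f|_{J(f)}$; the interval case is identical, with $\pfD^\R$ in place of $\pfD$ throughout. Set $P \= P_f(t)$ and let $\psi \= -t\log|D\pF| - P\, m_f$, a potential that on each branch of $\pF$ is locally of bounded distortion thanks to Lemma~\ref{l:bounded distortion to nice}; by hypothesis~\eqref{e:Bowen formula} its induced pressure is $\sP_f(t,P)=0$.

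\emph{Conformal measure.} First I would introduce the Ruelle operator
\[
\cL g(z) \= \sum_{W \in \pfD} |D\phi_W(z)|^t \exp(-P\, m_f(W))\, g(\phi_W(z)),
\]
acting on bounded continuous functions on the maximal invariant set of $\pF$. By Lemma~\ref{l:bounded distortion to nice} each $\phi_W$ extends univalently to $\hV_f$ with image in $\pV$ and distortion at most $\Xi_1$, compositions $\phi_{\uW}$ are defined on $\hV_f$ with the same distortion bound, and $\cL\mathbf{1}$ is continuous, bounded below away from $0$, and bounded above with supremum comparable to the partition function $Z_1(t,P)$, which is finite since $\sP_f(t,P)$ is. Applying the Schauder--Tychonoff theorem to $\nu \mapsto \cL^*\nu/(\cL^*\nu)(\mathbf{1})$ on the weak$^*$\nobreakdash-compact convex set of probability measures on the maximal invariant set produces an eigenmeasure $\tmu$ with $\cL^*\tmu = \lambda\tmu$. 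Evaluating against $\cL^n\mathbf{1}$ gives $\lambda^n = \tmu(\cL^n\mathbf{1})$, and the distortion bound makes $\tmu(\cL^n\mathbf{1})$ comparable to $Z_n(t,P)$ uniformly in $n$, so $\log\lambda = \lim_n \tfrac1n\log Z_n(t,P) = \sP_f(t,P) = 0$; hence $\lambda=1$ and $\cL^*\tmu=\tmu$, which is exactly $(t,P)$\nobreakdash-conformality of $\tmu$ for $\pF$. Uniqueness among $(t,P)$\nobreakdash-conformal probability measures follows from exactness of $\tmu$: the uniform hyperbolic expansion in Lemma~\ref{l:bounded distortion to nice} shrinks the cylinders $\phi_{\uW}(\pV)$, so any two such measures are mutually absolutely continuous.

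\emph{Invariant density and descent to $f$.} The bounded distortion makes $(\cL^n\mathbf{1})_n$ uniformly bounded above, bounded below away from $0$, and log\nobreakdash-equicontinuous; a Ces\`aro limit gives $h=\cL h$ with $\int h\,\dd\tmu=1$ and $\Xi_1^{-t}\le h\le\Xi_1^t$, so $\dd\trho\=h\,\dd\tmu$ is $\pF$\nobreakdash-invariant and satisfies~\eqref{e:density bound}; by the usual Ruelle--Perron--Frobenius picture for a topologically mixing full-branch uniformly expanding system, $\trho$ is moreover the unique equilibrium state of $\pF$ for $\psi$ among invariant measures with finite $\int m_f$. Now assume~\eqref{e:time integrability}. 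By~\eqref{e:density bound} and~\eqref{eq:6}, the total mass of $\hrho$ equals $\sum_{W}m_f(W)\,\trho(W)$, comparable to the series in~\eqref{e:time integrability}, hence finite; a Rokhlin\nobreakdash-tower computation gives $f_*\hrho=\hrho$, so the normalization $\rho$ of $\hrho$ is $f$\nobreakdash-invariant. Abramov's formula gives $h_\rho(f)=h_\trho(\pF)/\!\int m_f\,\dd\trho$, and $\log|D\pF|=\sum_{j=0}^{m_f-1}\log|Df|\circ f^j$ gives $\int\log|Df|\,\dd\rho=\int\log|D\pF|\,\dd\trho/\!\int m_f\,\dd\trho$; together with $h_\trho(\pF)-t\int\log|D\pF|\,\dd\trho=P\int m_f\,\dd\trho$ (which is $\sP_f(t,P)=0$) this yields $h_\rho(f)-t\int\log|Df|\,\dd\rho=P=P_f(t)$, so $\rho$ attains the supremum defining $P_f(t)$ and is an equilibrium state. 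For uniqueness, any equilibrium state $\mu$ of $f|_{J(f)}$ for $-t\log|Df|$ has positive Lyapunov exponent (standard for equilibrium states of such potentials), hence gives positive mass to $\pV$ and lifts to a $\pF$\nobreakdash-invariant probability measure with finite $\int m_f$; by Abramov this lift is again an equilibrium state for $\psi$, so it equals $\trho$, and therefore $\mu$ is proportional to $\hrho$.

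The steps I expect to be most delicate are (i) pinning the operator eigenvalue $\lambda$ to the combinatorial value $\exp(\sP_f(t,P))$, where the full-branch structure and the uniform distortion constant $\Xi_1$ of Lemma~\ref{l:bounded distortion to nice} are indispensable, and (ii) the lifting step in the uniqueness argument, namely that \emph{every} equilibrium state of $f$ for $-t\log|Df|$ is seen by the inducing scheme (no escape to atoms on the backward orbit of~$0$, and summable first\nobreakdash-return times against it), so that the correspondence between equilibrium states for $\pF$ and for $f$ is a genuine bijection. I would also stress that~\eqref{e:time integrability} is precisely the hypothesis that makes the induced equilibrium state project to a \emph{finite} measure; without it one is left only with a $\sigma$\nobreakdash-finite conservative invariant measure for $f$.
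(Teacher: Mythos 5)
First, a point of comparison: this paper does not prove Proposition~\ref{p:equilibria} at all; it is quoted verbatim from~\cite{CorRiv1708}, Proposition~5.2, so your proposal can only be measured against the strategy of that companion paper. Your existence half is the standard route for such statements and is essentially sound: the first-return map $\pF$ is a full-branch uniformly expanding map with the uniform distortion constant~$\Xi_1$ of Lemma~\ref{l:bounded distortion to nice}, the induced Ruelle operator at the potential $-t\log|D\pF|-P_f(t)m_f$ has finite first partition function (finiteness of $Z_1(t,P_f(t))$ does follow from finiteness of $\sP_f(t,P_f(t))$ via the distortion bound, as you implicitly use), the eigenvalue is pinned to $\exp(\sP_f(t,P_f(t)))=1$ by comparing $\cL^n\mathbf{1}$ with $Z_n$, and the conformal measure, the invariant density with the bound~\eqref{e:density bound}, the finiteness of $\hrho$ under~\eqref{e:time integrability}, the $f$-invariance of $\hrho$, and the Abramov--Kac computation showing that the normalization of $\hrho$ attains $P_f(t)$ are all correct (the identity $h_{\trho}(\pF)-t\int\log|D\pF|\,\dd\trho-P_f(t)\int m_f\,\dd\trho=0$ deserves a word more than ``the usual RPF picture'', but with $\int m_f\,\dd\trho<+\infty$ it is indeed standard Gibbs theory for the countable full shift).

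The genuine gap is in the uniqueness step. You argue: an equilibrium state of $f$ for $-t\log|Df|$ has positive Lyapunov exponent, ``hence gives positive mass to $\pV$ and lifts''. The implication in quotes is false as stated: invariant measures with strictly positive exponent supported entirely outside $\bigcup_{k\ge 0}f^{-k}(\pV)$ exist (for instance measures on periodic orbits such as $\delta_{\beta(f)}$, or more generally measures carried by the compact hyperbolic set of points that never enter $\pV$), so positivity of the exponent alone does not make the measure visible to the inducing scheme. What is true and easy is the converse direction you also need: \emph{if} an ergodic equilibrium state charges $\pV$, then, because $\pF$ is a genuine first-return map, its normalized restriction to $\pV$ is $\pF$-invariant with $\int m_f$ finite by Kac, and your Abramov argument then identifies it with $\trho$. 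But to rule out equilibrium states that never see $\pV$ you need a separate argument — either a pressure gap for the invariant set of points avoiding $\pV$ (plus the positive-exponent input, which itself requires citing the Collet--Eckmann-type results the paper uses elsewhere, e.g.~\cite{NowSan98,Riv1204,PrzRivSmi03}), or simply invoking the known uniqueness theorems for these geometric potentials (\cite{PrzRiv1405} in the real case, \cite{PrzRiv11} in the complex case), which is exactly how the present paper handles uniqueness for $t<t_*(\fs)$ in~\S\ref{ss:proof of pressure estimates}. You flagged this lifting step as delicate, which is the right instinct, but as written the proposal asserts rather than proves it, and the one sentence offered in its support is not correct.
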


\subsection{Statements of main results}
\label{ss:main results}
In this subsection we state Theorems~\ref{t:pressure estimates} and~\ref{t:temperature dependence}.
These results are based on the notion of ``admissible'' family of quadratic-like maps introduced in~\cite{CorRiv1708}, which we recall now.
The proofs of both of these theorems are given in~\S\ref{s:proofs}.

Given a normalized quadratic-like map~$f$, for each periodic point~$p$ of~$f$ of period~$m$ in~$\N$, put
$$
\chi_f(p) \= \frac{1}{m} \log|Df^m(p)|.
$$

\begin{defi}[Admissible family of quadratic-like maps]
  \label{d:admissibility}
A uniform family of quadratic-like maps~$\sF$ is \emph{admissible}, if for every sufficiently large integer~$n \ge 6$ the following properties hold.
\begin{enumerate}
\item[1.]
If we endow~$\sF$ with the topology of locally uniform convergence, then there is a continuous function~$s_n \colon \cK_n \to \cK_n(\sF)$ such that~$c \circ s_n$ is the identity.
\item[2.]
  For every~$f$ in~$s_n(\cK_n)$, we have
  \begin{equation}
    \label{eq:5}
      \chi_f(p(f))
      >
      \chi_f(p^+(f))
      \text{ and }
      \chi_f(p^+(f))
    =
    \chi_f(p^-(f)).
    \end{equation}  
\end{enumerate}
\end{defi}

Given a uniform family~$\sF$, for each integer~$n \ge 5$, each~$f$ in~$\cK_n(\sF)$ put
\begin{equation}
\label{d:theta n}
\theta(f)
\=
\left| \frac{Dg_f(p(f))}{Dg_f(\wtp(f))} \right|^{1/2}
\text{ and }
t_*(f) \= \frac{ \log 2}{\log \theta(f)}.
\end{equation}
Note that~$t_*(f)$ is a real analytic function of~$f$.

Endow the set~$\{+, - \}$ with the discrete topology, and~$\signs$ with the corresponding product topology.

\begin{theoalph}[Phase transitions]
  \label{t:pressure estimates}
For every~$R_0 > 0$ there is a constant~$K_0 > 1$ such that if~$\sF_0$ is an admissible uniform family of quadratic-like maps with constants~$K_0$ and~$R_0$, then for every sufficiently large integer~$n$ there is a continuous subfamily~$(\fs)_{\uvarsigma \in \signs}$ of~$s_n(\cK_n)$ such that the following properties hold.

For each~$\uvarsigma$ in~$\signs$ the map~$\fs$ is essentially topologically exact and we have $\chi_{\rm{crit}}(\fs) > 0$.
Moreover, for each positive~$t  < t_*(\fs)$ there is a unique equilibrium state~$\rho_{t}^{\R}(\uvarsigma)$ (resp. $\rho_{t}(\uvarsigma)$) of~$\fs|_{I(\fs)}$ (resp.~$\fs|_{J(\fs)}$) for the potential $- t \log |D\fs|$. 

Furthermore, there are positive numbers~$q$, $\xi$,  $\Xi> 2\xi$, and $\Delta\ge 1$ such that for every~$\uvarsigma$ in~$\signs$ the following property holds. 
Put 
$$
t_*\= t_*(\fs)
\text{ and }
t_0
\=
\left(1 - \frac{1}{400q} \right) \cdot t_*
$$
and define the functions~$\delta^+$, $\delta^-$, $P^+$,
$P^- : (t_0, + \infty) \to \R$, by
\[
  \delta^+(t)
  \= 
\begin{cases}
\frac{2 \log 2}{3} \cdot 2^{- q \left(\left( \frac{t_*(\Xi-2\xi)}{q\left( t_* - t \right)}\right)^{\frac{1}{2}} - 1 \right)^3}
& \text{if } t \in (t_0,t_*);
\\
0 & \text{if } t \ge t_*;
\end{cases}
\]
\[
 \delta^-(t)
\=
\begin{cases}
\frac{\log 2}{3} \cdot 2^{- q \left(\left( \frac{t_*(\Xi+2\xi)}{q\left( t_* - t \right)}\right)^{\frac{1}{2}} +\Delta \right)^3}
& \text{if } t \in (t_0,t_*);
\\
0 & \text{if } t\ge t_*;
\end{cases}    
\]
\[
P^+(t)\= - t \frac{\chi_{\rm{crit}}(\fs)}{2} + \delta^+(t),
\text{ and }
P^-(t)\= - t \frac{\chi_{\rm{crit}}(\fs)}{2} + \delta^-(t).
\]
Then for~$t >t_0$ we have
$$
P^-(t)  \le P_{\fs}^\R(t) \le  P_{\fs}(t) \le P^+(t),
$$
and for $t \ge t_*$ there is no equilibrium state of~$\fs|_{I(\fs)}$
(resp.~$\fs|_{J(\fs)}$) for the potential~$- t \log |D\fs|$  and we have
$$
\sP_{\fs}^\R\left(t, - t \frac{\pchicrit}{2}\right)
\le 
\sP_{\fs}\left(t, - t\frac{\pchicrit}{2}\right)
<
0. $$
Finally, for~$t \ge t_*$ and for~$p$ in~$\R$ the following properties hold:
\begin{enumerate}
\item[1.]
If~$p \ge - t \pchicrit/2$, then there is a unique $(t, p)$\nobreakdash-conformal probability measure for~$\fs|_{I(\fs)}$ (resp.~$\fs$) supported on~$I(\fs)$ (resp.~$J(\fs)$).
Moreover, this measure is dissipative, purely atomic, and supported on the backward orbit of~$z = 0$.
\item[2.]
If~$p < - t \pchicrit/2$, then there is no $(t, p)$\nobreakdash-conformal probability measure for~$\fs|_{I(\fs)}$ (resp.~$\fs$) supported on~$I(\fs)$ (resp.~$J(\fs)$).
\end{enumerate}
\end{theoalph}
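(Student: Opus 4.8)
The plan is to reduce everything to the inducing scheme of~\S\ref{ss:inducing scheme} and the pressure/conformal-measure estimates for the first-return map~$\pF$, using the postcritical series as the bridge between the combinatorics of~$\fs$ and the numerical estimates in the appendices. First I would fix an admissible uniform family~$\sF_0$ with constants~$K_0, R_0$ and a large integer~$n$, and use Proposition~\ref{p:ps} together with the homeomorphism~$\iota \colon \cK_n \to \{0,1\}^{\N_0}$ and the section~$s_n \colon \cK_n \to \cK_n(\sF_0)$ to define the subfamily~$(\fs)_{\uvarsigma \in \signs}$: one converts~$\uvarsigma \in \signs$ into a sequence in~$\{0,1\}^{\N_0}$ (for instance by encoding the tail with a fixed recurrence pattern so that the two symbols~$+$ and~$-$ select between the periodic orbits~$\cO^+(\fs)$ and~$\cO^-(\fs)$), applies~$\iota^{-1}$, and then~$s_n$. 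Continuity of~$\uvarsigma \mapsto \fs$ follows from continuity of~$\iota^{-1}$ and of~$s_n$. That~$\fs$ is essentially topologically exact and has~$\pchicrit > 0$ is then immediate from the discussion in~\S\ref{ss:Parameters} (the critical point is non-recurrent to non-repelling cycles, so~$K_c = J_c$, the interval~$I_c$ is invariant and non-renormalizable, hence topologically exact; and the Collet--Eckmann-type lower bound for~$|D\fs^n(\fs(0))|$ coming from admissibility gives~$\pchicrit > 0$).

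Next I would establish the pressure formula for~$t \ge t_*$. The key point is that the postcritical orbit of~$\fs$ shadows the periodic orbits~$\cO^+(\fs), \cO^-(\fs)$, whose Lyapunov exponents are both equal to~$\tfrac12\log\theta(\fs)$ by~\eqref{eq:5} and~\eqref{d:theta n}, and that~$\chi_{\fs}(p(\fs)) > \chi_{\fs}(p^+(\fs))$. One shows that the Poincar\'e-type series in~\eqref{e:Poincare series}, evaluated at~$p = -t\,\pchicrit/2$, is dominated (up to bounded distortion, using Lemma~\ref{l:bounded distortion to nice}) by a numerical series of the type treated in Appendix~\ref{s:reduction}; when~$t \ge t_*$ that series converges, which via Proposition~\ref{p:conformal measures} forces the dissipative, purely atomic conformal measure supported on the backward orbit of~$0$, and simultaneously gives~$P_{\fs}(t) \le -t\,\pchicrit/2$. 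The matching lower bound~$P_{\fs}^{\R}(t) \ge -t\,\pchicrit/2$ comes from testing the variational principle against measures equidistributed on long pieces of the postcritical orbit (or on~$\cO^+(\fs)$ directly), so~$P_{\fs}^{\R}(t) = P_{\fs}(t) = -t\,\pchicrit/2$ for~$t \ge t_*$. The strict inequality~$\sP_{\fs}^{\R}(t, -t\,\pchicrit/2) \le \sP_{\fs}(t, -t\,\pchicrit/2) < 0$ then follows because the induced system~$\pF$ sees a strictly smaller exponential growth rate than the full map at this slope (again the convergence of the induced partition function~$Z_\ell$), and the non-existence of an equilibrium state for~$t \ge t_*$ follows from Proposition~\ref{p:equilibria} combined with the failure of~\eqref{e:Bowen formula} at~$p = P_{\fs}(t)$, or from the fact that the only conformal measure is dissipative and atomic. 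Items~1 and~2 on conformal measures for~$t \ge t_*$, $p \in \R$, then drop out of Proposition~\ref{p:conformal measures}: for~$p \ge -t\,\pchicrit/2$ the series~\eqref{e:Poincare series} converges (monotonicity in~$p$), giving the unique dissipative atomic measure; for~$p < -t\,\pchicrit/2$ one shows no conformal measure can exist, since a nonatomic one would force~$p \ge P_{\fs}(t) = -t\,\pchicrit/2$, a contradiction, and an atomic one would need the series to converge, which it does not below the critical slope.

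For~$t$ in~$(t_0, t_*)$ I would prove the two-sided bound $P^-(t) \le P_{\fs}^{\R}(t) \le P_{\fs}(t) \le P^+(t)$ by the same translation into the postcritical series, now in the regime where it diverges: the induced pressure~$\sP_{\fs}(t, \cdot)$ has a zero at~$p = P_{\fs}(t) > -t\,\pchicrit/2$, and one estimates how far this zero sits above~$-t\,\pchicrit/2$ by controlling the postcritical series. The middle inequality~$P_{\fs}^{\R} \le P_{\fs}$ is automatic from~$I(\fs) \subset J(\fs)$. The crux is that the postcritical series is comparable (with explicit multiplicative and additive constants~$\xi, \Xi, \Delta, q$ absorbed as in the statement) to a numerical series whose growth is governed by the recurrence pattern encoded in~$\uvarsigma$; feeding this into the abstract estimates of Appendix~\ref{s:reduction} produces precisely the Kosterlitz--Thouless-type bounds~$2^{-q(\cdots)^3}$ with the stated exponents. \textbf{The main obstacle} is exactly this last step: obtaining sharp upper \emph{and} lower estimates on the postcritical series $\sum_j \exp(-jp)\sum_{y \in \fs^{-j}(0)} |D\fs^j(y)|^{-t}$ uniformly in~$\uvarsigma \in \signs$ and uniformly as~$t \uparrow t_*$, with the two-sided control tight enough that the resulting lower bound~$\delta^-$ and upper bound~$\delta^+$ have the same double-exponential-in-$(t_*-t)^{-1/2}$ shape. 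This requires carefully tracking the distortion constants through the inducing scheme (Lemma~\ref{l:bounded distortion to nice}), relating the return-time data~$m_{\fs}(W)$ to the depths in the Yoccoz puzzle, and then matching the combinatorial series against the abstract numerical series of Appendix~\ref{s:reduction} so that the constants~$q, \xi, \Xi, \Delta$ can be chosen once and for all, independently of~$\uvarsigma$.
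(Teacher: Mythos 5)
Your overall architecture (inducing scheme, translation of the geometric pressure into the postcritical series, and then into the abstract series of Appendix~\ref{s:reduction}) is the same as the paper's, but two steps of your sketch are genuinely wrong or missing. First, the matching lower bound for $t \ge t_*$ does not follow from testing the variational principle against $\cO^+(\fs)$ or against equidistributions along the postcritical orbit: by Lemma~\ref{l:2 variables functions} the exponent of $p^+(\fs)$ equals $\chicritfs$, and the postcritical orbit itself has exponent $\chicritfs$, so these measures only give $P^{\R}_{\fs}(t) \ge -t\,\chicritfs$, which is strictly weaker than the required $-t\,\chicritfs/2$. The halving comes from the quadratic critical point, and the paper obtains it indirectly: the lower bound $P^{\R}_{\fs}(t)\ge P^-(t)$ for $t<t_*$ (Proposition~\ref{p:improved MS criterion}, part~1, together with Lemma~\ref{l:positive temperature first floor}) gives $P^{\R}_{\fs}(t_*)=-t_*\chicritfs/2$ in the limit $t\nearrow t_*$, whence $\chiinfR(\fs)\ge \chicritfs/2$, and equality $\chiinfR(\fs)=\chicritfs/2$ is taken from~\cite[Proposition~5.6]{CorRiv1708}; only then does the variational definition yield the affine formula for $t\ge t_*$. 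Your direct argument cannot close this factor of~$2$.

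Second, the construction of the subfamily is not a detail you can leave as ``a fixed recurrence pattern'': the specific shape of $\delta^{\pm}$ (the $2^{-q(\,\cdot\,)^3}$ bounds with $s^{\pm}(\tau)=\bigl(\tfrac{\Xi\mp 2\xi}{q(1-\tau)}\bigr)^{1/2}$) is produced by the itinerary built in~\S\ref{ss:subfamily} from the partition into blocks $I_s,J_s$ of~\S\ref{ss:positive temperature partition}, whose lengths are governed by the cubic $Q(s)=qs^3$; it is exactly this design that makes Lemma~\ref{l:2 variables functions} convert the postcritical series into the series $\Pi^{\pm}$ evaluated at $\lambda(s^+(\tau)-2)$ and $\lambda(s^-(\tau)+\Omega)$, where Lemma~\ref{l:positive temperature first floor} applies. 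You correctly identify this as the main obstacle, but that obstacle is the content of the theorem, so the proposal defers rather than proves the two-sided pressure estimate. Finally, the non-existence of equilibrium states for $t\ge t_*$ cannot be deduced from ``failure of~\eqref{e:Bowen formula}'' (Proposition~\ref{p:equilibria} is only a sufficient criterion), and the route through the atomic conformal measure needs a bridge you omit: the paper first shows any equilibrium state would have positive Lyapunov exponent (Collet--Eckmann, via~\cite{NowSan98,Riv1204,PrzRivSmi03}) and then invokes~\cite{Dob15b,Dob12} to make it absolutely continuous with respect to the $(t,-t\chicritfs/2)$\nobreakdash-conformal measure, which is supported on the backward orbit of $z=0$ and hence would have to charge the non-periodic point $0$, a contradiction.
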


For proving the High-Order Phase Transitions Theorem, observe that $t_*(\fs)$ is continuous in the compact set $\signs$.

\begin{theoalph}[Sensitive dependence]
\label{t:temperature dependence}
For every~$R_0 > 0$ there is a constant~$K'_0 > 1$ such that if~$\sF_0$ is an admissible uniform family of quadratic-like maps with constants~$K'_0$ and~$R_0$, then for every sufficiently large integer~$n$ there is a continuous subfamily~$(\fs)_{\uvarsigma \in \signs}$ of~$s_n(\cK_n)$ satisfying the conclusions of Theorem~\ref{t:pressure estimates} and such that the following property holds:

There are constants~$C > 0$ and~$\varkappa > 0$, and a strictly increasing sequence  of positive real numbers $(\tau_m)_{m\in \N}$ converging to~1 such that for every sequence~$\uvarsigma = (\varsigma(m))_{m \in \N}$ in~$\signs$ the following properties hold.
Put~$t_* \= t_*(\fs)$, and let~$m$ and~$\whm$ be integers such that
$$ \whm \ge m \ge 1
\quad \text{and} \quad
\varsigma(m) = \cdots = \varsigma(\whm) $$
and let~$t$ be in~$[t_* \tau_m, t_* \tau_{\whm}]$.
Then the equilibrium state~$\rho_{t}^{\R}(\uvarsigma)$ (resp.~$\rho_{t}(\uvarsigma)$) of~$\fs|_{I(\fs)}$ (resp.~$\fs|_{J(\fs)}$) satisfies
\begin{multline*}
\rho_t^{\R}(\uvarsigma) \left( B \left( \cO^{\varsigma(m)}(\fs), \exp\left(\frac{- \varkappa}{t_* - t}\right) \right) \right)
\ge
1 - C \exp\left(\frac{- \varkappa}{t_* - t}\right)
\\
\left( \text{resp. } 
 \rho_t(\uvarsigma) \left( B \left( \cO^{\varsigma(m)}(\fs), \exp\left(\frac{- \varkappa}{t_* - t}\right) \right) \right)
\ge
1 - C \exp\left(\frac{- \varkappa}{t_* - t}\right)
\right).
\end{multline*}
\end{theoalph}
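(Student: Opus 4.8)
The plan is to run the argument behind Theorem~\ref{t:pressure estimates} while keeping track of \emph{where} the equilibrium state sits, not only of the pressure. Fix a large integer~$n$; the subfamily~$(\fs)_{\uvarsigma \in \signs}$ is the one produced for Theorem~\ref{t:pressure estimates}, obtained by prescribing the combinatorics~$\iota(\fs)$ as a concatenation of consecutive ``blocks'' $B_1, B_2, \dots$ of code indices whose lengths are fixed once and for all (depending only on~$n$, hence the same for every~$\uvarsigma$) and grow with their index, the~$m$\nobreakdash-th block being ``constant of sign~$\varsigma(m)$'': along~$B_m$ the code forces the postcritical orbit of~$\fs$ to shadow~$\cO^{+}(\fs)$ when~$\varsigma(m) = +$ and~$\cO^{-}(\fs)$ when~$\varsigma(m) = -$. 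Fix~$\uvarsigma$ and abbreviate~$f \= \fs$, $\chicritf$, $t_* \= t_*(f)$. For~$t \in (0, t_*)$, Theorem~\ref{t:pressure estimates} gives the unique equilibrium state~$\rho_t = \rho_t(\uvarsigma)$ (and~$\rho_t^{\R}(\uvarsigma)$ on the interval side), and by Proposition~\ref{p:equilibria} it is the normalization of~$\hrho = \sum_{W} \sum_{j = 0}^{m_f(W) - 1} (f^{j})_*(\trho|_W)$, where~$W$ ranges over the first-return branches of~$f$ to~$\pV$, $\trho$ is~$\pF$\nobreakdash-invariant with density in~$[\Xi_1^{-t}, \Xi_1^{t}]$ relative to the~$(t, P_f(t))$\nobreakdash-conformal measure~$\tmu$ for~$\pF$, and~$m_f(W)$ is the return time. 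By bounded distortion (Lemma~\ref{l:bounded distortion to nice}) and~\eqref{eq:6}, for a Borel set~$A$ one has, up to the finitely many branch types whose orbit meets~$\partial A$,
$$
\rho_t(A) \asymp \frac{\sum_W \#\{\, 0 \le j < m_f(W) : f^{j}(W) \subset A \,\}\, \tmu(W)}{\sum_W m_f(W)\, \tmu(W)}, \qquad \tmu(W) \asymp \exp\!\bigl(- m_f(W) P_f(t)\bigr) \sup_{z \in W} |D\pF(z)|^{-t}.
$$
So it suffices to bound the~$\tmu$\nobreakdash-weighted fraction of return time spent outside~$B\bigl(\cO^{\varsigma(m)}(f), \exp(-\varkappa/(t_* - t))\bigr)$.

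Next, group the branches~$W$ according to how long their forward orbit shadows the postcritical orbit of~$f$ before a ``free'' return. As in the reduction used for Theorem~\ref{t:pressure estimates} (the Geometric Peierls estimates of Appendix~\ref{s:estimating pressure} and the numerical series of Appendix~\ref{s:reduction}, from~\cite{CorRiv1708}), the two weighted sums above are comparable to a numerical series indexed by this \emph{shadowing depth}~$\ell$, whose~$\ell$\nobreakdash-th term has size~$\asymp \exp(-\ell\, \delta(t) + o(\ell))$, where~$P_f(t) = - t \chicritf / 2 + \delta(t)$ and, by Theorem~\ref{t:pressure estimates}, $\delta^-(t) \le \delta(t) \le \delta^+(t)$. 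The~$o(\ell)$ comes from the code-dependent factor in the branch multiplier, governed by the multipliers along~$\cO^{+}(f)$ and~$\cO^{-}(f)$; since these two orbits have equal Lyapunov exponent by admissibility~\eqref{eq:5}, that factor is only subexponential. Hence the~$\tmu$\nobreakdash-weight concentrates on shadowing depths of order~$1/\delta(t)$, which, as~$\delta(t)$ is of order~$2^{-(\mathrm{const}/\sqrt{t_* - t})^{3}}$, is enormous compared with~$1/(t_* - t)$. Because~$\delta(t)$ is monotone in~$t$ and the block lengths grow, there is a sequence~$(\tau_m)$ in~$(0,1)$, strictly increasing to~$1$ and depending only on~$n$ (not on~$\uvarsigma$), such that for every~$m \le \whm$ and every~$t \in [t_* \tau_m, t_* \tau_{\whm}]$ the shadowing depths carrying all but an~$\exp(-\varkappa/(t_* - t))$ fraction of the weight lie inside~$B_m \cup \cdots \cup B_{\whm}$.

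Finally, suppose~$\varsigma(m) = \cdots = \varsigma(\whm)$, so the code has constant sign~$\varsigma(m)$ along~$B_m \cup \cdots \cup B_{\whm}$. The relevant inverse branches of~$g_f$ contract toward~$p^{+}(f)$ or~$p^{-}(f)$ (according to~$\varsigma(m)$) at a definite rate, so after~$j$ further steps into this stretch the postcritical orbit---hence any orbit shadowing it---lies within~$\mathrm{const}\cdot\theta(f)^{-j}$ of~$\cO^{\varsigma(m)}(f)$; therefore all but~$O(\log(1/\varepsilon)) = O(1/(t_* - t))$ of the steps spent in~$B_m \cup \cdots \cup B_{\whm}$ fall in~$B(\cO^{\varsigma(m)}(f), \varepsilon)$ with~$\varepsilon = \exp(-\varkappa/(t_* - t))$. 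Combining this with the previous step---the dominant shadowing depths are of order~$1/\delta(t) \gg 1/(t_* - t)$ and sit in~$B_m \cup \cdots \cup B_{\whm}$---the~$\tmu$\nobreakdash-weighted fraction of return time outside~$B(\cO^{\varsigma(m)}(f), \varepsilon)$ is at most~$C \exp(-\varkappa/(t_* - t))$ after relabelling~$\varkappa$ and choosing~$C$; this also absorbs the straddling branches set aside above, and yields the stated estimate. The interval statement follows verbatim with~$\pfD^{\R}$, $F_f|_{\pD \cap \R}$, $P_f^{\R}$ in place of~$\pfD$, $\pF$, $P_f$.

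The crux is the localization in the second step: showing that for~$t \in [t_* \tau_m, t_* \tau_{\whm}]$ the mass of the reduced numerical series genuinely lives on code indices in~$B_m$ through~$B_{\whm}$, with no leakage into earlier blocks (whose sign may be opposite) or later ones. This requires sharp two-sided control of the partial sums of that series as a function of~$t$---in effect a quantitative inversion of the relation between~$t$ and the optimal shadowing depth---together with a choice of block lengths and of~$(\tau_m)$ compatible with these estimates and uniform in~$\uvarsigma$. Carrying this out is precisely the purpose of the abstract estimates in Appendix~\ref{s:reduction}, and is the technical heart of~\S\ref{ss:proof of temperature dependence}.
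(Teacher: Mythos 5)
Your outline follows the same route as the paper: represent $\rho_t(\uvarsigma)$ through the inducing scheme via Proposition~\ref{p:equilibria}, compare the weight of the level-$k$ branches with the postcritical series (Proposition~\ref{p:estimating Z_1 by the postcritical series}) and then with the numerical two-variables series (Lemma~\ref{l:2 variables functions}, Appendix~\ref{s:reduction}), locate the dominant levels inside a stretch where the code has constant sign, and use the contraction Lemma~\ref{l:contractions} to place the corresponding orbit segments in $B(\cO^{\varsigma(m)}(\fs),\exp(-\varkappa/(t_*-t)))$. However, the decisive step is asserted rather than proved, and as you state it it would not go through. You claim that the level-$\ell$ weight is $\asymp\exp(-\ell\,\delta(t)+o(\ell))$ and that, ``because $\delta(t)$ is monotone and the block lengths grow,'' one can choose $(\tau_m)$ so that all but an $\exp(-\varkappa/(t_*-t))$ fraction of the weight sits in the blocks $B_m,\dots,B_{\whm}$. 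Neither ingredient is available in this form: the series $\pi^{\pm}_k$ does not decay at the uniform rate $\delta(t)$ (it decays like $2^{-\tau}$ per step along the $I_s$ stretches and only like $2^{-\lambda k}$, $\lambda=3\delta(t)/\log 2$, along the sign windows $J_s$), and no monotonicity of $\delta(t)=P(t)+t\chicritf/2$ is established or used in the paper. What actually pins the dominant window is the \emph{two-sided} pressure bound of Theorem~\ref{t:pressure estimates}, which forces $P(t)=-t\chicritf/2+\frac{\log 2}{3}\lambda(s^{\C})$ with $s^{\C}\in[s^+(\tau)-2,\,s^-(\tau)+\Omega]$; the slack between $s^+$ and $s^-+\Omega$ is precisely why the sequence $(\tau_m)$ must be chosen in advance with $s^+(\tau_m)\in\N$ and the gap condition $s^-(\tau_{m-1})+\Omega+6<s^+(\tau_m)$, so that the four windows $[s_0-3,s_0]$ (with $s_0=\lceil s^{\C}\rceil$) cannot straddle a sign change, including the boundary case $\whm=m$. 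Your construction leaves the block lengths unspecified, yet the result depends on the cubic-exponential choice $|J_s|\approx 2^{q(s+1)^3}$ both to make $\lambda(s)$ match the pressure gap and to make such $(\tau_m)$ exist.

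Concretely, the missing quantitative content is the relative-mass estimate $|\hrho-\hrho'|\le C\,2^{-qs_0^2}\,|\hrho'|$, which in the paper comes from Lemma~\ref{l:zero temperature tower}(2) (the $2^{-qs_0^2}$ comparison of the truncated weighted series with $\sum_{\varsigma=s_0-3}^{s_0}\hJ^-_\varsigma$) combined with both parts of Proposition~\ref{p:estimating Z_1 by the postcritical series}, together with the lower bound $s_0^2\gtrsim t_{*,\inf}/(q(t_*-t))$ that converts $2^{-qs_0^2}$ into $\exp(-\varkappa/(t_*-t))$; a ``fraction of return time'' count alone does not control the leakage from branch levels lying in earlier or later windows and in the $I_s$ stretches. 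Two further points are skipped: before invoking Proposition~\ref{p:equilibria} one must verify its hypotheses (Bowen's equation~\eqref{e:Bowen formula} and the time integrability~\eqref{e:time integrability}), which requires the finiteness of the weighted series (Lemma~\ref{l:zero temperature tower}(1)) and Proposition~\ref{p:improved MS criterion}; and for the $\varsigma(m)=-$ case one needs the parity bookkeeping built into the construction (even endpoints and even lengths of the $1^-$ blocks) so that the postcritical orbit shadows the orbit of $p^-(\fs)$ in phase, which is what makes the puzzle-piece identification~\eqref{e:equal puzzle pieces} and hence the support statement for $\hrho'$ work.
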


Our estimates show that for every $\uvarsigma$ in $\signs$ every accumulation measure of $(\rho_t^{\R}(\uvarsigma))_{t>0}$ and of $(\rho_t(\uvarsigma))_{t>0}$ is supported on $\cO^{+}(\fs)\cup \cO^{-}(\fs)$, see Remark~\ref{r:accumulation}.

\subsection{Proof of the Sensitive Dependence at Positive Temperature assuming Theorem~\ref{t:temperature dependence}}
\label{ss:proof of positive sensitive dependence}
Let~$K'_0$ be the constant given by Theorem~\ref{t:temperature dependence} for $R_0=80$.
Let~$r_\#$ be the constant given by~\cite[Lemma~3.5]{CorRiv1708}  for this choice of~$K'_0$. Put $\Lambda_0 = B(0,r_\#)$, and
let $\sF_0 = (\df)_{\lambda\in \Lambda_0}$ be the family of quadratic-like maps constructed in \cite[Lemma~3.5]{CorRiv1708}  for this choice of~$K'_0$. 
By  \cite[Lemma~3.5]{CorRiv1708} and~\cite[(3.3)]{CorRiv1708} the family $\sF_0$ is uniform with constants $K'_0$ and $80$, and admissible.
Fix a sufficiently large integer~$n$ for which the conclusions of Theorem~\ref{t:temperature dependence} are satisfied, and let~$(\fs)_{\uvarsigma \in \signs}$ and $(\tau_m)_{m \in \N}$ as in the statement of Theorem~\ref{t:temperature dependence}.
Given~$\uvarsigma$ in~$\signs$, denote by~$\lambda(\uvarsigma)$ the unique parameter in~$\Lambda_0$ such that~$\whf_{\lambda(\uvarsigma)} = \fs$.
By~\cite[Lemma~3.5]{CorRiv1708}, $\lambda(\uvarsigma)$ is real.
We prove the Sensitive Dependence at Positive Temperature with~$\Lambda = \{ \lambda(\uvarsigma) \mid \uvarsigma \in \signs \}$.

Let~$\lambda_0$ in~$\Lambda$ be given, and let~$(\delta_\ell)_{\ell\in \N}$ be an increasing sequence of positive numbers converging to~1 as $\ell\to +\infty$.
Let~$\uvarsigma_0$ in $\signs$ be such that~$\lambda_0\=\lambda(\uvarsigma_0)$, and put~$t_{*, 0} \= t_*(f_{\uvarsigma_0})$.
Replacing~$( \delta_{\ell} )_{\ell \in\N}$ by a subsequence if necessary, assume that for every $\ell\in \N$ there is a unique $m\in \N$ such that 
$$
\delta_\ell\in [\tau_m,\tau_{m+1})\text{ and } 
\delta_{\ell+1}\ge \tau_{m+2}.$$
Put~$m(\ell) \= m$.
We see that $m(\ell+1)\ge m(\ell)+2$, which implies
\begin{equation}
\label{e:delta}
\delta_\ell\in [\tau_{m(\ell)}, \tau_{m(\ell+1)-1}].
\end{equation}
Let~$M$ be a given positive integer. Let $\ell_M$ be the smallest $\ell$ such that $m(\ell)\ge M$.
Let~$\uvarsigma = (\varsigma(m))_{m \in \N}$ be a sequence in~$\signs$ that agrees with~$\uvarsigma_0$ up to $m(\ell_M)$ and    such that   for every even (resp. odd) integer~$\ell > \ell_M$ and every~$m$ in~$[m(\ell), m(\ell+1) - 1]$ we have~$\varsigma(m) = +$ (resp. $\varsigma(m) = -$).

Put $\lambda\=\lambda(\uvarsigma)$ and $t_* \= t_*(f_{\uvarsigma})$.
Since the subfamily $(\fs)_{\uvarsigma \in \signs}$ is continuous, taking $M$ sufficiently large we can ensure that $\lambda$ is arbitrarily close to $\lambda_0$ and that~$t_*$ is arbitrarily close to $t_{*, 0}$. 

By~\eqref{e:delta} we have that~$t_*(\uvarsigma)\cdot \delta_\ell$ is in~$[t_*(\uvarsigma)\cdot\tau_{m(\ell)}, t_*(\uvarsigma)\cdot\tau_{m(\ell+1)-1}]$.
Since by definition of~$\uvarsigma$ for each~$\ell$ in~$\N$  larger than $\ell_M$ we have
$$ \varsigma(m(\ell)) = \cdots = \varsigma(m(\ell + 1) - 1), $$
and since~$\varsigma(m(\ell))$ alternates between~$+$ and~$-$ according to whether~$\ell$ is even or odd, Theorem~\ref{t:temperature dependence} implies the desired assertion for this choice of~$\uvarsigma$ 
and thus, for~$\lambda$ and~$t_*$.

\section{Proof of main results}
\label{s:proofs}
In this section we prove Theorems~\ref{t:pressure estimates} and~\ref{t:temperature dependence} in~\S\ref{ss:proof of pressure estimates} and~\S\ref{ss:proof of temperature dependence}, respectively.
Indeed, for both theorems we use the same family of maps. 
More precisely,   put~$\upsilon \= \frac{1}{4} \log 2$, and let~$R_0 > 0$ be given. Let~$K_1$, $n_1$, and~$\kappa_1$ be given by Proposition~\ref{p:transporting Peierls} with~$R' = R_0$.
We prove that Theorems~\ref{t:pressure estimates} and~\ref{t:temperature dependence} hold with~$K_0 =  K_1$.
Let~$\sF_0$ be a uniform family of quadratic-like maps that admits~$K_0$ and~$R_0$ as uniformity constants, and that is admissible.
By Proposition~\ref{p:transporting Peierls} with~$\sF_1 = \sF_0$, for every~$n \ge n_1$, every~$f$ in~$\cK_n(\sF_0)$ satisfies the Geometric Peierls Condition with constants~$\kappa_1$ and~$\upsilon$, and by~\eqref{e:choice of n for negative pressure} we have
\begin{equation}
  \label{e:large phase transition}
\log \theta(f) \le \frac{3}{8}\upsilon.
\end{equation}
Taking~$n_1$  larger if necessary, assume that for every~$n \ge n_1$ there is a continuous function~$s_n \colon \cK_n \to \cK_n(\sF_0)$ such that~$c \circ s_n$ is the identity, and that~\eqref{eq:5} holds for every~$f$ in~$s_n(\cK_n)$.

Let~$\Xi_1$, $\Xi_2$, $C_0$,$\upsilon_0$ and ~$\Xi_3$ be the constants given by Lemmas~\ref{l:bounded distortion to nice}, \ref{l:distortion to central 0},  \ref{l:contractions}, and \ref{l:2 variables functions},  respectively, with~$\sF = \sF_0$.
Moreover, let~$n_2$ and~$C_1$ the constants given by Proposition~\ref{p:improved MS criterion} with~$\sF = \sF_0$ and~$\kappa = \kappa_1$, let~$n_3$ and~$C_2$ be given by Proposition~\ref{p:estimating Z_1 by the postcritical series} with~$\sF = \sF_0$, and let~$n_{\&} \ge \max \{n_1,n_2, n_3\}$ be sufficiently large so that
\begin{equation}
  \label{e:choice of n I}
\exp(n_{\&} \upsilon) \ge \Xi_3^{\frac{1}{2}} C_1 \left(25 + 5\cdot \Xi_2 \right).
\end{equation}

\subsection{The subfamily}
\label{ss:subfamily}
In this subsection we define the subfamily $(f_{\uvarsigma})_{\uvarsigma\in \{+,-\}^{\N}}$, as in the statement of Theorems~\ref{t:pressure estimates} and~\ref{t:temperature dependence}.
Fix an integer~$n \ge n_{\&}$, let~$c_{\&}$ in~$\cK_n$ be such that~$\iota(c_{\&})$ is the constant sequence equal to~$0$, and put~$f_{\&} \= s_n(c_{\&})$.
By~\eqref{eq:5} we have~$\theta(f_{\&}) > 1$, so there is~$r_{\&} > 0$ such that for~$c$ in~$B(c_{\&}, r_{\&}) \cap \cK_n$ the number~$\theta(s_n(c))$ is defined, and depends continuously on~$c$.
Reducing~$r_{\&}$ if necessary, assume that for all~$c$ and~$c'$ in~$B(c_{\&}, r_{\&}) \cap \cK_n$ we have~$\theta(s_n(c)) \le \theta(s_n(c'))^2$.
By Proposition~\ref{p:ps} it follows that there is an integer~$q_{\&} \ge 0$ such that the set
$$ \{ c \in \cK_n \mid \text{ for every~$j$ in~$\{0, \ldots, q_{\&} \}$, $\iota(c)_j = 0$} \} $$
is a compact set contained in~$B(c_{\&}, r_{\&})$.
Put 
$$
t_{*,\sup} \=  \sup_{c \in \cK_n \cap B(c_{\&}, r_{\& })} \frac{\log2}{\log \theta(s_n(c)) }, 
t_{*,\inf} \=  \inf_{c \in \cK_n \cap B(c_{\&}, r_{\& })} \frac{\log2}{\log \theta(s_n(c)) }, 
$$
$$
\eta_0\=  \sup_{f \in s_n( \cK_n \cap B(c_{\&}, r_{\& }))} \frac{\exp(\chi_{f}(\beta(f)))}{\exp(\chi_\text{crit}(f))},
$$
and fix $\Omega > 0$ sufficiently large such that 
\begin{equation}\label{e:choice of Omega}
2^{ 2\Omega-3 } 
 \ge
2^n \left( \Xi_3^{\frac{1}{2}} C_1 \eta_0^{\frac{n}{2}}\right)^{t_{*,\sup}}.
\end{equation}
On the other hand, by~\eqref{eq:5} for each~$c$ in~$\cK_n$ the number
$$
\xi(s_n(c))=\frac{\log \Xi_2}{2\log\theta(s_n(c))},
$$ 
defined as in~\S\ref{ss:estimating postcritical series},
depends continuously on~$c$ in~$\cK_n$, so
$$ \xi \= \sup_{c \in \cK_n \cap B(c_{\&}, r_{\& })} \xi(s_n(c)) < + \infty. $$
Note that by our choice of $r_\&$ for every $c \in \cK_n \cap B(c_{\&}, r_{\& })$ we have 
\begin{equation}
\label{e:supxi}
\xi\le 2\xi(s_n(c)).
\end{equation}
Put~$\Xi \= \lceil 2\xi \rceil + 1$ as in~\S\ref{ss:positive temperature partition}, and let~$q \ge q_{\&}$ be a sufficiently large integer satisfying the conditions in~\S\ref{ss:positive temperature partition} and such that in addition $q+\Xi$ is even.
For each real number~$s \ge 0$, let~$I_s$ and~$J_s$ be the intervals defined in~\S\ref{ss:positive temperature partition} for these choices of~$\Xi$ and~$q$. 
By definition, as~$s$ varies in~$\N_0$ these intervals form a partition of~$[1, +\infty)$.
Moreover, for each integer~$s$ in~$\N_0$, the end points of~$I_s$ and~$J_s$ are even.

Endow the set~$\{0, 1^+, 1^- \}$ with the discrete topology, and $\{0, 1^+, 1^- \}^{\N_0}$ with the corresponding product topology.
Moreover, endow the subset~$\hSigma$ of~$\{0, 1^+, 1^- \}^{\N_0}$, defined in~\S\ref{ss:estimating postcritical series}, with the induced topology.

As in~\S\ref{ss:estimating the 2 variables series}, for every $\tau \in (0,1)$, put
$$ s^+(\tau) \= \left(\frac{\Xi - 2\xi}{q(1 - \tau)}\right)^{\frac{1}{2}} \text{ and } s^-(\tau) \= \left(\frac{\Xi + 2\xi}{q(1 - \tau)}\right)^{\frac{1}{2}}.$$
Let $(\tau_m)_{m\in \N_0}$ be an increasing sequence in $[1/2,1)$  converging to~1 satisfying the following properties:
\begin{align}
\label{e:tau 0}& \tau_0 \ge \left(1 - \frac{1}{400q} \right); \\
\label{e:tau 0 2}& t_{*,\sup}(1-\tau_0)\log C_0 \le t_{*,\inf} \frac{\upsilon_0}{8q}; \\
\label{e:tau recursion}& \text{For every } m \in \N, \text{ we have } s^+(\tau_{m}) \in \N, \text{ and } s^-(\tau_{m-1})+\Omega  + 6 < s^+(\tau_{m}).
\end{align}	
Notice that  the inequality $\Xi-2\xi\ge 1$ implies that for every $\tau \ge 1 - \frac{1}{400q}$ we have
\begin{equation}
\label{e:lower bound s+}
s^+(\tau)\ge 20.
\end{equation}
Put $\ell(0)=0$ and for every  $m$ in $\N$  put  $\ell(m) \= s^+(\tau_{m}) - 5$.
Given $\uvarsigma$ in $\{+,-\}^{\N}$, let~$\uwhx(\uvarsigma)$ be the sequence in~$\{0, 1^+, 1^- \}^{\N_0}$ defined by
$$ \uwhx(\uvarsigma)_j
\=
\begin{cases}
0
&
\text{if for some~$s$ in~$\N_0$ we have $j+1 \in I_s$};
\\
1^+
&
\text{if $j+1 \in \bigcup_{k\in [\ell(0),\ell(1))} J_{k}$};
\\
1^{\varsigma(m)}
&
\text{for } j +1 \in \bigcup_{k\in [\ell(m),\ell(m+1))} J_{k}.
\end{cases}
$$
Note that by definition~$\uwhx(\uvarsigma)$ is in~$\hSigma$ and that the first~$q$ entries of this sequence are equal to~$0$.
Moreover, the map~$\uwhx : \signs \to \hSigma$ so  defined  is continuous.
Finally, note that the length of each maximal block of~$1^-$'s in~$\uwhx(\uvarsigma)$ is even.

Define the family of itineraries~$(\iota(\uvarsigma))_{\uvarsigma \in \signs}$ in~$\{0, 1\}^{\N_0}$, by
$$ \iota(\uvarsigma)_j
=
\begin{cases}
0
&
\text{if $\uwhx(\uvarsigma)_j = 0$};
\\
1
&
\text{if~$\uwhx(\uvarsigma)_j = 1^+$};
\\
0
&
\text{if~$\uwhx(\uvarsigma)_j = 1^-$ and~$j$ is even};
\\
1
&
\text{if~$\uwhx(\uvarsigma)_j = 1^-$ and~$j$ is odd}.
\end{cases}
$$
Note that~$\iota(\uvarsigma)$ is compatible with~$\uwhx(\uvarsigma)$ in the sense of~\S\ref{ss:estimating postcritical series} and that the first~$q$ entries of this sequence are equal to~$0$. Moreover, $\iota(\uvarsigma)$ depends continuously on~$\uvarsigma$ in~$\signs$.

Given~$\uvarsigma$ in~$\signs$, let~$c(\uvarsigma)$ in~$\cK_n$ be the unique parameter such that~$\iota(f_{c(\uvarsigma)}) = \iota(\uvarsigma)$ (Proposition~\ref{p:ps}), and put~$\fs \= s_n(c(\uvarsigma))$.
Note that the function~$\uvarsigma \mapsto \fs$ so defined is continuous.
On the other hand, since for each~$\uvarsigma$ in~$\signs$ the first~$q \ge q_{\&}$ entries of~$\iota(\fs) = \iota(\uvarsigma)$ are equal to~$0$, the parameter~$c(\uvarsigma)$ is in~$B(\lambda_{\&}, r_{\&})$.
So, for every~$\uvarsigma$ and~$\uvarsigma'$ in~$\signs$ we have
\begin{equation}
  \label{e:theta distortion}
\theta(\fs) \le \theta(f_{\uvarsigma'})^2.
\end{equation}

This finishes the definition of the continuous subfamily~$(\fs)_{\uvarsigma \in \signs}$ of~$s_n(\cK_n)$.
 
\subsection{Proof of Theorem~\ref{t:pressure estimates}}
\label{ss:proof of pressure estimates}
The purpose of this subsection is to finish the proof of Theorem~\ref{t:pressure estimates}.
That for each~$\uvarsigma$ in~$\signs$ the map~$\fs|_{I(\fs)}$ is essentially topologically exact follows from the fact that this map is not renormalizable, see~\cite[\S3]{CorRiv13} for details.
To prove that for each~$\uvarsigma$ in~$\signs$ there is a unique equilibrium  state for every positive~$t<t_*(\fs)$ we just need to show that  
$P_{\fs}^{\R} (t) > -t \cdot \chi_{\text{crit}}(\fs)/2$.   
This follows from the estimates for the pressure function given below.
These estimates will be also used in the proof of Theorem~\ref{t:temperature dependence} in the next section, and they depend only on Lemma~\ref{l:positive temperature first floor}.

Let~$N : \N_0 \to \N_0$ and~$B : \N_0 \to \N_0$ be the functions defined in~\S\ref{ss:positive temperature partition} for our choices of~$\Xi$ and~$q$.
Clearly, $N(k)/k \to 0$ as~$k \to + \infty$, and for each~$\uvarsigma$ in~$\signs$, these functions coincide with those defined in~\S\ref{ss:estimating postcritical series} with~$(\whx_j)_{j \in \N_0} = \uwhx(\uvarsigma)$.
It follows that for each integer~$k \ge 0$, the~2 variables functions~$\pi_{\fs,k}^{\pm}$ defined in~\S\ref{ss:estimating postcritical series} 
for our choice of~$\xi$ in~\S\ref{ss:subfamily} are independent of~$\uvarsigma$; we denote them by~$\pi_k^{\pm}$.
Note that the 2~variables series defined in~\S\ref{ss:positive temperature partition}: $\Pi^{\pm}$,
and for each integer~$s \ge 0$, the series~$I_s^{\pm}$, and~$J_s^{\pm}$, satisfy
$$ \Pi^{\pm} = \sum_{k = 0}^{+ \infty} \pi_k^{\pm},
I_s^{\pm} = \sum_{k \in I_s} \pi_k^{\pm},\text{ and }
J_s^{\pm} = \sum_{k \in J_s} \pi_k^{\pm}.$$
Finally, for each real number~$s \ge 0$ put~$\lambda(s) = |J_s|^{-1}$, as in~\S\ref{ss:positive temperature partition}.

Fix~$\uvarsigma$ in~$\signs$ and put
$$ f\= \fs,
\wtp\= \wtp(\uvarsigma),
\whp \= \whp(\uvarsigma),
$$
$$ P^{\R} \= P_{f_{\uvarsigma}}^{\R},
\sP^{\R} \= \sP_{f_{\uvarsigma}}^{\R},
P\= P_{f_{\uvarsigma}},
\sP \= \sP_{f_{\uvarsigma}}, $$
$$ t_*\=t_*(\fs),  \text{ and } t_0\= \left(1 - \frac{1}{400q} \right) \cdot t_*.$$

Now we prove the estimates for the pressure functions for every $t$ in $\left(t_0,t_*\right)$.
Fix~$t$ in $\left(t_0,t_*\right)$ and  put
$$ \tau
\=
\frac{1 }{t_*} t
=
\frac{\log \theta(f)}{\log 2} t.$$
By~\eqref{e:lower bound s+},  we have $s^+(\tau) - 2 \ge 18.$ Put
$$ \hP^+
\=
- t \frac{\pchicrit}{2} + \frac{\log 2}{3} \lambda(s^+(\tau) - 2),
\quad \text{and} \quad
\hP^-
\=
- t \frac{\pchicrit}{2} + \frac{\log 2}{3} \lambda(s^-(\tau)+\Omega). $$
By~\eqref{e:large phase transition} we have that
\begin{equation}
\label{e:t_0 condition}
t_0 \ge 8 = \frac{2\log2}{\upsilon}. 
\end{equation}

First we prove the lower bound for the real pressure function. 
By~\eqref{eq:5}, \eqref{e:lambda upper bound}, Lemma~\ref{l:2 variables functions} with $\uwhx=\uwhx(\uvarsigma)$ and~$\delta = \frac{\log 2}{3} \lambda(s^-(\tau)+\Omega) \le \log 2$,  Lemma~\ref{l:positive temperature first floor}, the definition of $\eta_0$,  and
\eqref{e:choice of Omega}  we have
\begin{multline}
\label{e:postcritical series lower bound}
\sum_{k = 0}^{+ \infty} \exp \left(-(n+3k) \hP^- \right) |Df^{n+3k}(f(0))|^{-\frac{t}{2}}
\\
\begin{aligned}
& \ge \Xi_3^{- \frac{t}{2}} \exp \left( - n \frac{\log 2}{3} \lambda(s^-(\tau)+\Omega) \right) \left( \frac{\exp (\pchicrit)}{|Df(\beta(f))|}
\right)^{\frac{t}{2} n }
\Pi^- \left( \tau, \lambda(s^-(\tau)+\Omega) \right).
\\ & \ge
2^{-n} \left( \Delta^{\frac{1}{2}} \eta_0^{\frac{n}{2}} \right)^{-t} 2^{2\Omega-3}
\\ & 
\ge
C_1^t.
\end{aligned}
\end{multline}

By~\eqref{e:choice of n for negative pressure} we have~$ 2\upsilon \le \pchicrit $.
Thus, using that $\lambda(s)$ is strictly decreasing, by part~1 of Lemma~\ref{l:the itinerary}, and~\eqref{e:lambda upper bound}, we deduce 
$$\hP^-<\hP^+ < 0.$$
Together with~\eqref{e:t_0 condition} and using part~1 of Proposition~\ref{p:improved MS criterion}, we get that
\begin{equation}
  \label{e:pressure lower bound}
P(t) \ge P^{\R}(t) \ge \hP^-
\quad \text{and} \quad
\sP(t, \hP^-) \ge \sP^{\R}(t, \hP^-) > 0.  
\end{equation}
Putting $\Delta\= \Omega+1$, we get $P^{\R}(t) \ge \hP^-\ge P^-(t)$, where $P^-(t)$ as in the statement of Theorem~\ref{t:pressure estimates}.

Now we prove the upper bound for the complex pressure function.
Notice that by~\eqref{e:supxi}  and~\eqref{e:theta distortion} we have
$$
2^{\tau \xi} = \theta(f)^{t\xi}\le \theta(f)^{2t\xi(f)} = \Xi_2^t. 
$$
Combined with~\eqref{eq:5}, Lemma~\ref{l:2 variables functions} with $\uwhx=\uwhx(\uvarsigma)$ and~$\delta = \frac{\log 2}{3} \lambda(s^+(\tau) - 2)$, Lemma~\ref{l:positive temperature first floor},  and~\eqref{e:choice of n I}, we obtain that
\begin{multline*}
\label{e:postcritical series upper bound}
\sum_{k = 0}^{+ \infty} \exp \left(-(n+3k) \hP^+ \right) |Df^{n+3k}(f(0))|^{-\frac{t}{2}}
\\
\begin{aligned}
& \le \Xi_3^{\frac{1}{2}} \left( \frac{\exp (\pchicrit)}{|Df(\beta(f))|} \right)^{\frac{t}{2} n }
\Pi^+ \left( \tau, \lambda(s^+(\tau) - 2) \right).
\\ & \le
\left( \Xi_3^{\frac{1}{2}} \exp(- n \upsilon) \right)^t (25 +
5 \cdot 2^{\tau \xi})
\\ & \le
\left( \Xi_3^{\frac{1}{2}} \exp(- n \upsilon)  \left( 25 +5\cdot \Xi_2 \right) \right)^t
\\ & \le
C_1^{-t}.
\end{aligned}
\end{multline*}
Together with part~2 of Proposition~\ref{p:improved MS criterion}, this implies
\begin{equation}
  \label{e:pressure upper bound}
P^{\R}(t) \le P(t) \le \hP^+
\quad \text{and} \quad
\sP^{\R}(t, \hP^+) \le \sP(t, \hP^+) < 0.  
\end{equation}
To conclude we observe that by part~2 of Lemma~\ref{l:the itinerary} we have that $ \hP^+\le P^+(t)$, 
where $P^+(t)$ is as in the statement of Theorem~\ref{t:pressure estimates}.
This finish the prove of the estimates for the pressure functions for~$t$ in $(t_0,t_*)$.

Now we prove the existence and uniqueness of equilibrium states for every~$t$ in~$(0,  t_*)$. Put
$$ \chiinfR (f)
\=
\inf \left\{ \int \log |Df| \dd \mu \mid \mu \in \sM_f^{\R} \right\},
$$
and
$$ \chiinfC (f)
\=
\inf \left\{ \int \log |Df| \dd \mu \mid \mu \in \sM_f \right\}. $$
Combining~\eqref{e:pressure lower bound} and~\eqref{e:pressure upper bound}, we deduce that
$$ 
P(t_*)=P^{\R}(t_*) 
=  \lim_{t\nearrow t_*} P^{\R}(t) 
=
- t_* \frac{\chicritf}{2}. $$
Using that for every $t>0$ we have $P^{\R}(t) \ge -t\chiinfR (f)$, we deduce that $\chiinfR (f)\ge \chicritf/2$, and by \cite[Proposition~5.6]{CorRiv1708} we get $\chiinfR (f) = \chicritf/2$.
Similarly, $\chiinfC(f) = \chicritf /2$.
Using~\eqref{e:pressure lower bound} again, we conclude that for every~$t $ in $(0,t_*)$ we have
\begin{equation*}
P^{\R}(t) > - t\chiinfR (f)
\quad \text{and} \quad 
P(t) > - t \chiinfC (f).
\end{equation*}
The existence and uniqueness of equilibrium states follows from~\cite{PrzRiv1405} in the real case and from~\cite{PrzRiv11} in the complex case.

For the estimates for the pressure functions for $t\ge t_*$, observe that  
$$
P(t_*)=P^{\R}(t_*)=- t_* \chiinfR (f)
$$
and the definition of the pressure function imply that 
for every~$t $ in $[t_*,+\infty)$ we have
\begin{equation}
\label{e:pressure asintota}
P(t)=P^{\R}(t) 
=
- t\frac{\chicritf}{2}.
\end{equation}

Finally, we prove for $t\ge t_*$ the assertion about the $(t,p)$-conformal measures and the non-existence of equilibrium states.
Using~\eqref{eq:5}, Lemma~\ref{l:2 variables functions} with $\uwhx=\uwhx(\uvarsigma)$ and~$\delta = 0$, part~1 of Sublemma~\ref{subl:11},  and~\eqref{e:choice of n I}, this implies that for every $t$ in $\left[ t_*,+\infty\right)$,
\begin{multline*}
\label{e:postcritical series upper bound 1}
\sum_{k = 0}^{+ \infty} \exp \left(-(n+3k) \left(-t\pchicrit/2\right) \right) |Df^{n+3k}(f(0))|^{-\frac{t}{2}}
\\
\begin{aligned}
& \le \Xi_3^{\frac{1}{2}} \left( \frac{\exp (\pchicrit)}{|Df(\beta(f))|} \right)^{\frac{t}{2} n }
\Pi^+ \left( \tau,0) \right).
\\ & \le
\left( \Xi_3^{\frac{1}{2}} \exp(- n \upsilon) \right)^{t} 2( 2^{\tau \xi}+1)
\\ & \le
\left( \Xi_3^{\frac{1}{2}} \exp(- n \upsilon)  \left( 2 +2\cdot \Xi_2 \right) \right)^{t}
\\ & \le
C_1^{-t}.
\end{aligned}
\end{multline*}
Together with part~2 of Proposition~\ref{p:improved MS criterion}, this implies that for every $t$ in $\left[ t_*,+\infty\right)$,
\begin{equation*}
  \label{e:pressure upper bound 2}
\sP^{\R}\left(t, -t\frac{\pchicrit}{2}\right) \le \sP\left(t, -t\frac{\pchicrit}{2}\right) < 0.  
\end{equation*}
Thus, for every $p\ge -\pchicrit/2$ we get that 
$$
\sP^{\R}\left(t, p \right) \le \sP\left(t, p\right) \le \sP\left(t, -t\frac{\pchicrit}{2}\right)< 0. 
$$
Hence, the assertion about $(t,p)$-conformal measures follows from~\eqref{e:pressure asintota}, Proposition~\ref{p:conformal measures} and \cite[Lemma~5.5]{CorRiv1708}.

To prove the assertions about equilibrium states, let~$t \ge t_*$ be given and suppose by contradiction there is an 
equilibrium state~$\rho$ of~$f|_{I(f)}$ (resp.~$f|_{J(f)}$) for the potential~$- t \log |Df|$.
Since~$f$ satisfies the Collet-Eckmann condition, it follows that the Lyapunov exponent of~$\rho$ is strictly positive, see~\cite[Theorem~A]{NowSan98} or~\cite[Main Theorem]{Riv1204} for the real case and~\cite[Main Theorem]{PrzRivSmi03} for the complex case.
Then~\cite[Theorem~6]{Dob15b} in the real case and~\cite[Theorem~8]{Dob12} in the complex case imply that~$\rho$ is absolutely continuous with respect to the $(t, - t \pchicrit/2)$\nobreakdash-conformal measure for~$f|_{I(f)}$ (resp.~$f$) that is supported on~$I(f)$ (resp.~$J(f)$).
This implies in particular that~$\rho$ is supported on the backward orbit of~$z = 0$ and hence that~$\rho$ charges~$z = 0$.
This is impossible because this point is not periodic.
This contradiction shows that there is no equilibrium state of~$f|_{I(f)}$ (resp.~$f_{J(f)}$) for the potential~$- t \log |Df|$ and completes the proof Theorem~\ref{t:pressure estimates}.

\subsection{Proof of Theorem~\ref{t:temperature dependence}}
\label{ss:proof of temperature dependence}
In this subsection we complete the proof of Theorem~\ref{t:temperature dependence}.
We give the proof in the complex setting; except for the obvious notational changes, it applies to the real case without modifications.
The proof is based on the fact that for each~$\uvarsigma$ in~$\signs$ the equilibrium state~$\rho_t(\uvarsigma)$ is absolutely continuous with respect to  some  
$(t,P_{\fs}(t))$-conformal measure with a density bounded from above and from below (see Proposition~\ref{p:equilibria}). Then, we can estimate the measure via the derivative of the orbit  of the critical point. We keep the same notation as in the previous subsections.

Fix $t$ in $(t_0, t_*)$ and let~$m_0$ be in $\N$ such that $t$ is in~$[t_* \tau_{m_0-1}, t_* \tau_{m_0}]$.
Observe  that  $\tau_{m_0-1} \le  \tau \le \tau_{m_0}$.
On the other hand, note that by~\eqref{e:pressure lower bound} and~\eqref{e:pressure upper bound} there is~$s^{\C}$ in~$[s^+(\tau) - 2, s^-(\tau)+\Omega]$ such that
\begin{equation}
  \label{e:pressure control}
  P(t) = - t \frac{\chicritf}{2} + \frac{\log 2}{3} \lambda(s^{\C}).
\end{equation}

We start by proving that the hypotheses  of Proposition~\ref{p:equilibria} are satisfied for this value of $t$.
By~\eqref{eq:5},  Lemma~\ref{l:2 variables functions} with~$\delta = \frac{\log 2}{3} \lambda(s^-(\tau)+\Omega)$, and part~1 of Lemma~\ref{l:zero temperature tower}, we have
\begin{equation}
  \label{e:time postcritcal series}
\sum_{k = 0}^{+ \infty} k \cdot \exp \left(-(n+3k)\hP^- \right) |Df^{n+3k}(f(0))|^{-\frac{t}{2}}
<
+ \infty.  
\end{equation}
In particular, this implies that the sum in~\eqref{e:postcritical series lower bound} is finite, so by part~1 of Proposition~\ref{p:improved MS criterion} we have
$$
\sP(t, \hP^-)
<
+ \infty.$$
This implies that~$\sP(t, \cdot)$ is continuous and strictly decreasing on~$[\hP^-, + \infty)$, so by~\eqref{e:pressure lower bound}, \eqref{e:pressure upper bound}, and \cite[Proposition~II]{CorRiv1708} we have~\eqref{e:Bowen formula}.
Combining~\eqref{e:time postcritcal series} and  part~3 of Proposition~\ref{p:improved MS criterion}, we obtain
\eqref{e:time integrability}.
In view of~\eqref{e:pressure lower bound}, this implies the same sum in~\eqref{e:time integrability} with~$\hP^-$ replaced by~$P(t)$, is finite.
This complete the proof that the hypotheses of Proposition~\ref{p:equilibria} are satisfied.

Let $\trho$ and $\hrho$ be the measures given by Proposition~\ref{p:equilibria}. Put 
$\fD\= \fD_{f},
F \= F_{f}$
and for every integer~$k \ge 0$ put~$\fD_k \= \fD_{f, k}$.
For each integer~$s \ge 0$ let~$a_s$ and~$b_s$ be the left and right endpoint of~$I_s$, respectively as in~\S\ref{ss:positive temperature partition}.
Thus~$a_0 = 1$ and for every integer~$s \ge 0$ we have
$$ I_s = [a_s, b_s)
\quad \text{and} \quad
J_s = [b_s, a_{s + 1}). $$
Note that by part~2 of Lemma~\ref{l:the itinerary} and the hypothesis $q\ge 100(\Xi+1)$ in~\S\ref{ss:positive temperature partition} we have~$a_{s + 1} - b_s = |J_s| > (s + 1)^2$.

Put $s_0\= \lceil s^\C \rceil$ and for each integer~$\varsigma$ in~$[s_0 - 3, s_0]$ put
$$ \hrho_{\varsigma}'
\=
\sum_{k = b_{\varsigma} + {\varsigma}^2}^{a_{{\varsigma} + 1} - 1} \sum_{j = n + 3b_{\varsigma}-2}^{n + 3(k+1 - \varsigma^2)} \sum_{W \in \fD_{k}} (f^j)_* \left(\trho_t|_W \right), $$
and put
$$ \hrho''
\=
\sum_{k \in J_{s_0}} \sum_{j = n + 3b_{s_0 - 1}-2}^{n + 3(a_{s_0} +1  - s_0^2)} \sum_{W \in \fD_{ k}} (f^j)_* \left(\trho_t|_W \right). $$
Finally, put
$$ \hrho' \= \left( \sum_{\varsigma = s_0 - 3}^{s_0} \hrho_{\varsigma}' \right) +\hrho''. $$

In part~1 we estimate the total mass of~$\hrho'$ from below, and in part~2 we show that the total mass of~$\hrho_t - \hrho'$ is small in comparison to that of~$\hrho'$.
In part~3 we complete the proof of Theorem~\ref{t:temperature dependence} by showing  that the measure~$\hrho'$
is supported on a small neighborhood of the orbit of~$\wtp$ if~$\varsigma(m) = +$, and on a small neighborhood of the orbit of~$\whp$ if~$\varsigma(m) = -$.

Similarly to the previous subsection note that the~2 variables series defined in~\S\ref{ss:estimating the weighted 2 variables series}: $\tPi^{\pm}$, 
and for each integer~$s \ge 0$, the series~$\tI_s^+$, $\tJ_s^+$, and~$\hJ_s^\pm$, satisfy
$$ \tPi^+ = \sum_{k = 0}^{+ \infty} k \cdot \pi_k^+,
\tI_s^+ = \sum_{k \in I_s} k \cdot \pi_k^+,
\tJ_s^+ = \sum_{k \in J_s} k \cdot \pi_k^+ \text{ and } \hJ_s^{\pm} = \sum_{k = b_s + s^2}^{a_{s+1}-1} (k+1-b_s-s^2) \pi_k^{\pm} . $$

\partn{1}
To estimate the total mass of~$\hrho'$ from below  put~$\Upsilon_1 \= \Xi_1 C_2 \Xi_3^{\frac{1}{2}} \eta_0^{\frac{n}{2}} 2^n$.  Let~$\varsigma$ be an integer in~$[s_0 - 3, s_0]$.
By part~1 of Proposition~\ref{p:estimating Z_1 by the postcritical series}, Lemma~\ref{l:2 variables functions} with~$\delta = \frac{\log 2}{3}\lambda(s^{\C}) \le \log 2$, \eqref{eq:6}, \eqref{e:density bound}, and~\eqref{e:pressure control}, we have
\begin{displaymath}
  \begin{split}
|\hrho_{\varsigma}'|
& =
\sum_{k = b_{\varsigma} + {\varsigma}^2}^{a_{{\varsigma} + 1} - 1} 3(k + 1 - b_{\varsigma} - {\varsigma}^2) \sum_{W \in \fD_{ k}} \trho_t (W)
\\ & \ge
\left( \Xi_1 C_2 \right)^{-t} \sum_{k = b_{\varsigma} + {\varsigma}^2}^{a_{{\varsigma} + 1} - 1} 3(k + 1 - b_{\varsigma} - {\varsigma}^2) \exp(-(n + 3k)P(t)) |Df^{n + 3k}(f(0))|^{- t/2}
\\ & \ge
\left( \Xi_1 C_2 \Xi_3^{\frac{1}{2}} \left( \frac{|Df(\beta(\fs))|}{\exp(\chicritf)} \right)^{\frac{n}{2}}\right)^{-t} 2^{-n} \sum_{k = b_{\varsigma} + {\varsigma}^2}^{a_{{\varsigma} + 1} - 1} 3(k + 1 - b_{\varsigma} - {\varsigma}^2) \pi_k^-(\tau, \lambda(s^{\C}))
\\ & \ge
3 \Upsilon_1^{-t} \hJ_{\varsigma}^-(\tau, \lambda(s^{\C})).    
  \end{split}
\end{displaymath}
This implies
\begin{equation}
\label{e:central tower section I}
|\hrho'|
\ge
3\Upsilon_1^{-t} \sum_{\varsigma = s_0 - 3}^{s_0} \hJ_{\varsigma}^-(\tau, \lambda(s^{\C})).
\end{equation}

\partn{2} For~$\varsigma$ in~$[s_0 - 3, s_0]$, put
$$ H_{\varsigma}
\=
\{ k \in \N_0 \mid b_\varsigma + \varsigma^2 \le k \le a_{\varsigma + 1} - 1 \}. $$
By part~2 and the last inequality of Proposition~\ref{p:estimating Z_1 by the postcritical series}, \eqref{eq:6}, \eqref{e:density bound}, and~\eqref{e:pressure control}, we have
\begin{displaymath}
\begin{split}
\left| \hrho - \hrho' \right|
& =
\sum_{\substack{k \in \N_0 \\ k \not \in \bigcup_{\varsigma = \tau_0 - 3}^{s_0} H_{\varsigma}}} \sum_{W \in \fD_{ k}} m_{f}(W) \trho(W)
\\ & \quad +
\sum_{\varsigma = \tau_0 - 3}^{s_0 - 1} \sum_{k \in H_{\varsigma}} \sum_{W \in \fD_{ k}} \left(m_f(W) - 3 (k + 2 - b_{\varsigma} - \varsigma^2) \right) \trho(W)
\\ & \quad +
\sum_{k \in H_{s_0}} \sum_{W \in \fD_{ k}} \left(m_f(W) - 3 (k + 4 - b_{s_0} - 2s_0^2 + |J_{s_0 - 1}|))\right) \trho(W)
\\ & \le
(\Xi_1 C_2)^t \left[ \sum_{\substack{k \in \N_0 \\ k \not \in \bigcup_{\varsigma = \tau_0 - 3}^{s_0} H_{\varsigma}}} (n + 3k + 1) \exp(-(n + 3k) P(t)) |Df^{n + 3k}(f(0))|^{- \frac{t}{2}}
\right. \\ & \quad +
\sum_{\varsigma = \tau_0 - 3}^{s_0 - 1} \sum_{k \in H_{\varsigma}} (n + 3(b_{\varsigma} + \varsigma^2)) \exp(-(n + 3k) P(t)) |Df^{n + 3k}(f(0))|^{- \frac{t}{2}}
\\ & \quad + \left.
\sum_{k \in H_{s_0}} (n + 3(b_{s_0} + 2s_0^2 - |J_{s_0 - 1}|)) \exp(-(n + 3k) P(t)) |Df^{n + 3k}(f(0))|^{- \frac{t}{2}}
\right].    
  \end{split}
\end{displaymath}
Thus, if we put~$\Upsilon_2 \= \Xi_1 C_2 \Xi_3^{\frac{1}{2}} \exp(-n \upsilon)$, then by~\eqref{eq:5}, Lemma~\ref{l:2 variables functions} with~$\delta = \frac{\log 2}{3} \lambda(s^{\C})$, \eqref{eq:6}, and part~2 of Lemma~\ref{l:zero temperature tower},
\begin{displaymath}
  \begin{split}
\left| \hrho - \hrho' \right|
& \le
(n + 4) \Upsilon_2^t \left[ \tPi^+ (\tau, \lambda(s^{\C}))
- \sum_{\varsigma = s_0 - 3}^{s_0} \hJ_{\varsigma}^+(\tau, \lambda(s^{\C}))
- (|J_{s_0 - 1}| - s_0^2) \hJ_{s_0}^+(\tau, \lambda(s^{\C})) \right]
\\ & \le
(n + 4) \Upsilon_2^t 2^{-q s_0^2} \sum_{\varsigma = s_0 - 3}^{s_0} \hJ_{\varsigma}^-(\tau, \lambda(s^{\C})).        
  \end{split}
\end{displaymath}
Together with~\eqref{e:central tower section I} we get
$$ \left| \hrho - \hrho' \right|
\le
3(n + 4) \left(\Upsilon_1 \Upsilon_2 \right)^t 2^{- q s_0^2} |\hrho'|.
$$
Putting
$C=3(n+4)(\Upsilon_1\Upsilon_2)^{t_{*,\sup}}$
  we have
\begin{equation} 
\label{e:meas comparison}
\frac{\left| \hrho - \hrho' \right|}{|\hrho|}
\le \frac{\left| \hrho - \hrho' \right|}{|\hrho'|}
\le
C2^{- q s_0^2} \le C \exp(-qs_0^2 \log 2). 
\end{equation}

\partn{3} Using the inequalities~~$s_0\ge s^\C  \ge s^+(\tau)-2 $, $s^+(\tau_0)\ge 20$, $\Xi-2\xi\ge 1$, and the definitions of~$\tau$ and~$t_{*,\text{inf}}$ we have that for every $\varsigma\in [s_0-3,s_0]$,
\begin{equation}
\label{e:lower bound s_0}
 \varsigma^2
\ge
\frac{s_0^2}{2}
\ge
\frac{(s^+(\tau)-2)^2}{2} 
\ge
\frac{(s^+(\tau))^2}{4} 
= \frac{\Xi-2\xi}{4q(1-\tau)} 
\ge \frac{t_{*,\text{inf}}}{4q(t_*-t)}. 
\end{equation}
Using~\eqref{e:tau 0 2} and  putting~$\upsilon_! \= \upsilon_0 \frac{t_{*,\text{inf}}}{8q}$, we get
\begin{equation}
  \label{e:oscilating support}
C_0 \exp\left(\frac{ -\upsilon_0\cdot t_{*,\inf} }{4q(t_*-t)}\right) \le \exp\left(\frac{ - \upsilon_! }{t_*-t}\right).
\end{equation}

For $\varsigma\in \{+,-\},$ denote by~$\cO^\varsigma$ the forward orbit of~$p^\varsigma$  under~$f$.
Recall that  $\tau_{m_0-1} \le  \tau \le \tau_{m_0}$ and  $s^\C$ in $[s^+(\tau)-2,  s^-(\tau) +\Omega]$.
Using that $s^+$ and $s^-$ are increasing functions  we have 
$$
s^+(\tau_{m_0-1}) - 2 < s^+(\tau)-2 \le s^\C \le  s^-(\tau) +\Omega  \le s^-(\tau_{m_0})  +\Omega.
$$
Hence,
$$
s^+(\tau_{m_0-1}) - 2 < s_0   \le s^-(\tau_{m_0})  +\Omega+1.
$$
and by~\eqref{e:tau recursion} we
$$
s^+(\tau_{m_0-1}) - 2  < s_0 < s^+(\tau_{m_0+1}) - 5.
$$
This implies that  
\begin{equation}
\label{e:s_0 interval 1}
[s_0-3,s_0] \subseteq [\ell(m_0-1), \ell(m_0+1)).
\end{equation} 
A similar argument shows that if $\tau=\tau_{m_0-1}$ then 
\begin{equation}
\label{e:s_0 interval 2}
[s_0-3,s_0] \subseteq [\ell(m_0-1), \ell(m_0)).
\end{equation}  
Fix an integer~$\varsigma$ in~$[s_0 - 3, s_0]$ and 
let $m(\varsigma)$ in $\N_0$ be so that $\ell(m(\varsigma))\le \varsigma < \ell(m(\varsigma)+1)$.
For every integer~$j$ such that $j+1$ is in~$\bigcup_{k\in [\ell(m(\varsigma)),\ell(m(\varsigma)+1))} J_{k}$ we have~$\uwhx(\uvarsigma)_j = 1^{\varsigma(m(\varsigma))}$, so
$$ \iota(\uvarsigma)_j
=
\begin{cases}
1 & \text{if $\varsigma(m(\varsigma)) = +$};
\\
0 & \text{if $\varsigma(m(\varsigma)) = -$ and~$j$ is even};
\\
1 & \text{if $\varsigma(m(\varsigma)) = -$ and~$j$ is odd}.
\end{cases} $$
 Since $b_\varsigma$ is even, for every~$\ell$ in~$[0,a_{\varsigma + 1} - 1-b_\varsigma]$ the points
$ f^{n + 1 + 3(b_{\varsigma}+\ell-1)}(0)$ and $f^{3\ell}(p^{\varsigma(m(\varsigma))})$ are both in $Y_f$ or both in $\widetilde{Y}_f$.
Since $Y_f$ and $\widetilde{Y}_f$ are puzzle pieces of depth~4, it follows that 
$$ P_{f, 3(a_{\varsigma + 1} - 1-b_\varsigma)+ 4}(f^{n + 1 + 3(b_\varsigma-1)}(0))
=
P_{f, 3(a_{\varsigma + 1} - 1-b_\varsigma)+ 4}(p^{\varsigma(m(\varsigma))}). $$
Then, for each integer~$j$ in~$[b_{\varsigma}-1, a_{\varsigma+1} -2]$ we have
\begin{equation}\label{e:equal puzzle pieces}
 P_{f, 3(a_{\varsigma + 1}-2 -j)+4}(f^{n + 1 + 3j}(0))
=
P_{f, 3(a_{\varsigma + 1}-2 -j)+4}(f^{3j}(p^{\varsigma(m(\varsigma))})),
\end{equation}
which implies that  for each integer~$k$  in~$J_\varsigma$ and for each integer~$j$ in~$[b_{\varsigma}-1, k -1]$,
$$ P_{f, 3(k - j) +1}(f^{n + 1 + 3j}(0))
=
P_{f, 3(k - j) +1}(f^{3j}(p^{\varsigma(m(\varsigma))})).$$

Note that by definition of~$\fD_{ k}$, every element~$W$ of~$\fD_{ k}$ is contained in~$P_{f, n + 3k + 2}(0)$, so, if in addition we have $k\ge b_\varsigma+\varsigma^2$ and~$j \le  k  - \varsigma^2$, then by~\eqref{e:oscilating support} and Lemma~\ref{l:contractions} we obtain
$$ f^{n + 1 + 3j}(W) \cup  f^{(n + 1 + 3j) + 1}(W) \cup 
f^{(n + 1 + 3j) + 2}(W)
\subset
B(\cO^{\varsigma(m(\varsigma))}, \exp( - \upsilon_! / (t_*-t))),$$
which proves that~$\hrho'_\varsigma$ is supported on~$B(\cO^{\varsigma(m(\varsigma))}, \exp( - \upsilon_! / (t_*-t)))$.

On the other hand, for each integer~$k$ in~$J_{s_0}$,  every element $W$ of $\fD_{k}$ is contained in $P_{f,n+3k+2}(0)$, and hence in $P_{f,n+3(a_{s_0}-1)+2}(0)$. Thus, by~\eqref{e:equal puzzle pieces}
 with $\varsigma=s_0-1$  we have, by~\eqref{e:oscilating support} and Lemma~\ref{l:contractions}, that for every integer $j$ in $[b_{s_0-1}-1, a_{s_0}-s_0^2]$, 
  $$f^{n + 1 + 3j}(W) \cup  f^{(n + 1 + 3j) + 1}(W) \cup f^{(n + 1 + 3j) + 2}(W)
\subset
B(\cO^{\varsigma(m(s_0-1))} , \exp( - \upsilon_! / (t_*-t))), $$
which proves that~$\hrho''$ is supported on~$B(\cO^{\varsigma(m(s_0-1))}, \exp( - \upsilon_! / (t_*-t)))$.

Let~$m$ and~$\whm$ be integers as in the statement of Theorem~\ref{t:temperature dependence}, that is,
\begin{equation}
\label{e:emes}
 \whm \ge m \ge 1,
\varsigma(m) = \cdots = \varsigma(\whm), 
\text{ and fix } t \text{ in } [t_*\tau_m, t_* \tau_{\whm}].
\end{equation}
Then, $\tau_m \le \tau \le \tau_{\whm}$ so for every~$\varsigma$ in~$[s_0-3,s_0]$, by~\eqref{e:s_0 interval 1}
 if $\whm >  m$ and  by~\eqref{e:s_0 interval 2} if $\whm =  m$,  we have $\varsigma(m(\varsigma)) = \varsigma(m)$. 
It follows from the considerations above that the measure $\hrho'$ is supported on~$B(\cO^{\varsigma(m)}, \exp( - \upsilon_! / (t_*-t)))$. 
To conclude the proof, recall that the equilibrium state~$\rho_t$ of~$f|_{J(f)}$ for the potential $- t \log |D f|$ is the probability measure proportional to~$\hrho$.
By~\eqref{e:meas comparison} together with~\eqref{e:lower bound s_0} and~\eqref{e:oscilating support}, we conclude that 
$$ \rho_t\left(\C \setminus B\left(\cO^{\varsigma(m)},  \exp \left( -\frac{\upsilon_! }{t_*-t}  \right)\right)\right)
\le
\frac{|\hrho - \hrho'|}{|\hrho|}
\le
C \exp\left(- \frac{t_{*,\inf} \cdot \log 2}{2(t_*-t)}\right). $$
Recalling that $\upsilon_! = \upsilon_0 \cdot t_{*,\text{inf}}/8q$, this completes the proof of Theorem~\ref{t:temperature dependence} with 
$$\varkappa= \min\{\tfrac{\upsilon_0}{8q}, \tfrac{\log 2}{2}\} t_{*,\inf}.$$

\begin{rema}
\label{r:accumulation}
Without assuming the existence of $\whm$ and $m$ satisfying~\eqref{e:emes} the measure 
$\hrho'$ is supported on $B\left(\cO^{+}\cup \cO^{-},  \exp ( -\upsilon_!/(t_*-t) )\right)$, and the estimate above gives
that for every $t\in (	t_*\tau_1,t_*)$ we have
$$ \rho_t\left(\C \setminus B\left(\cO^{+}\cup \cO^{-},  \exp \left( -\frac{\varkappa}{t_*-t}  \right)\right)\right)
\le
C \exp\left(- \frac{\varkappa}{t_*-t}\right). $$

\end{rema}

\appendix

\section{Estimating the geometric pressure function}
\label{s:estimating pressure}

In this appendix we collect several results from~\cite{CorRiv1708} that we use in the proofs Theorems~\ref{t:pressure estimates} and~\ref{t:temperature dependence} in~\S\ref{s:proofs}.

Throughout this section we fix a uniform family of quadratic-like maps~$\sF$, with constants~$R$ and~$K$.

\subsection{Some uniform  bounds}
\label{ss:uniform distortion bound}

For a parameter~$c$ in~$\cP_2(-2)$ the external rays~$R_c(7/24)$ and~$R_c(17/24)$ land at the point~$\gamma(c)$ in~$P_{c,1}(0)$, see~\cite[\S3.3]{CorRiv13}.
Let~$\hU_c$ be the open disk containing~$-\beta(c)$ that is bounded by the
equipotential~$2$ and by
\begin{equation*}
\label{eq:pleasant cut}
R_c(7/24) \cup \{ \gamma(c) \} \cup R_c(17/24).  
\end{equation*}
Put $\hW_c \= f_c^{-1}(\hU_c)$, and for every $n\ge 3$ and every $f$ in $\cK_n(\sF)$ put $\hW_f \= h_f(\hW_{c(f)})$.

\begin{lemm}[Uniform distortion bound, \cite{CorRiv1708}, Lemma~4.9]
\label{l:distortion to central 0}
There is~$\Xi_2 > 1$ such that for each integer~$n \ge 4$ and each~$f$ in~$\cK_n(\sF)$ the following properties hold:
For each integer~$m \ge 1$ and each connected component~$W$ of~$f^{-m}(P_{f,1}(0))$ on which~$f^m$ is univalent, $f^m$ maps a neighborhood of~$W$ biholomorphically to~$\hW_f$ and the distortion of this map on~$W$ is bounded by~$\Xi_2$.
\end{lemm}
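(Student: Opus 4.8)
This is a uniform Koebe-type distortion bound, and the plan is to deduce it from the Koebe distortion theorem once two facts are in place, with constants independent of $n\ge 4$ and of $f\in\cK_n(\sF)$: first, that $\cl{P_{f,1}(0)}$ is compactly contained in $\hW_f$ with $\modulus\!\left(\hW_f\setminus\cl{P_{f,1}(0)}\right)\ge\delta_0$ for some $\delta_0>0$; second, that whenever $f^m|_W$ is univalent, the branch of $f^{-m}$ carrying $P_{f,1}(0)$ onto $W$ extends to a univalent map $\varphi_W$ on all of $\hW_f$. Granting both, one takes the neighborhood of $W$ in the statement to be $\varphi_W(\hW_f)$; since $\cl{P_{f,1}(0)}\subset\hW_f$ with complementary annulus of modulus at least $\delta_0$, the Koebe distortion theorem bounds the distortion of $\varphi_W$ on $P_{f,1}(0)$, hence the distortion of $f^m$ on $W=\varphi_W(P_{f,1}(0))$, by a constant $\Xi_2=\Xi_2(\delta_0)>1$ depending only on $\delta_0$. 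That is the assertion.

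For the modulus bound I would transfer the estimate through the quasi-conformal conjugacy $h_f$. By the definition of a uniform family, $h_f$ is $K$-quasi-conformal and carries $P_{c(f),1}(0)$ and $\hW_{c(f)}=f_{c(f)}^{-1}(\hU_{c(f)})$ onto $P_{f,1}(0)$ and $\hW_f$; since a $K$-quasi-conformal map distorts the modulus of an annulus by at most the factor $K$, it suffices to bound $\modulus\!\left(\hW_c\setminus\cl{P_{c,1}(0)}\right)$ from below for $c=c(f)\in\cK_n$. From the explicit description of $\hU_c$ — bounded by the equipotential $2$ and two external rays landing at $\gamma(c)$, a vertex of the depth-$3$ Yoccoz puzzle — one reads off that $\hU_c$ strictly contains $\cl{P_{c,0}(-\beta(c))}$, and hence, since $P_{c,1}(0)=f_c^{-1}(P_{c,0}(-\beta(c)))$, that $\hW_c=f_c^{-1}(\hU_c)$ strictly contains $\cl{P_{c,1}(0)}$. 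Now $\cK_n\subset\cP_n(-2)\subset\cP_3(-2)$ for every $n\ge3$ by Proposition~\ref{p:ps} and Lemma~\ref{l:auxiliary para-puzzle pieces}, the set $\cl{\cP_3(-2)}$ is compact, and puzzle pieces, equipotentials and external rays vary continuously with the parameter; therefore $c\mapsto\modulus\!\left(\hW_c\setminus\cl{P_{c,1}(0)}\right)$ is continuous and strictly positive on $\cl{\cP_3(-2)}$, hence bounded below there, which gives $\delta_0$.

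For the univalent extension I would run the standard Yoccoz-puzzle pull-back argument. Univalence of $f^m|_W$ is equivalent to $0\notin f^j(W)$ for $j=0,\dots,m-1$, and each $f^j(W)$ is a puzzle piece, being a connected component of $f^{-(m-j)}(P_{f,1}(0))$. Let $W'$ be the connected component of $f^{-m}(\hW_f)$ containing $W$; the goal is to show $0\notin f^j(W')$ for $j<m$, which yields that $f^m\colon W'\to\hW_f$ is univalent and is the required extension. Using $f(\hW_f)=h_f(\hU_{c(f)})$ together with the inclusion $\cl{P_{f,0}(\tbeta(f))}\subset h_f(\hU_{c(f)})$, each $f^j(W')$ is a connected component of an iterated $f$-preimage of $h_f(\hU_{c(f)})$ that is only slightly larger than the puzzle piece $f^j(W)$ it contains, and the point to verify is that this enlargement never reaches the critical point while $f^j(W)$ does not. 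This is precisely what the choice of the boundary of $\hU_c$ — the equipotential $2$ and the rays at the depth-$3$ vertex $\gamma(c)$ — is designed to guarantee, and it rests on the combinatorial analysis of the wake $\cW$ carried out in~\cite{CorRiv13}; compare~\S\ref{ss:wake 1/2} and~\S\ref{ss:expanding Cantor set}.

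I expect this last step — propagating univalence from $W$ to the enlarged domain $W'$ — to be the main obstacle: it requires tracking how the boundary rays and equipotentials of $\hW_f$ pull back along the orbit of $W$ and checking that they never collide with the critical point under the hypothesis that $f^m|_W$ is univalent, which is the genuinely combinatorial part of the proof. The modulus bound, by contrast, is a soft compactness argument once the inclusion $\cl{P_{c,1}(0)}\subset\hW_c$ has been recorded.
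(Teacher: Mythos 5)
First, note that this paper does not prove the lemma at all: it is imported verbatim from~\cite{CorRiv1708} (Lemma~4.9 there), so there is no in-paper argument to match your sketch against line by line. Your architecture — (i) a uniform lower bound on $\modulus\bigl(\hW_f\setminus\cl{P_{f,1}(0)}\bigr)$ obtained from the quadratic family via the $K$\nobreakdash-quasi-conformal conjugacy and a compactness argument over the para-puzzle (better phrased as: the defining rays and equipotentials move holomorphically over $\cP_2(-2)$, which contains the compact set $\cl{\cP_3(-2)}\supset\bigcup_{n\ge 4}\cK_n$), and (ii) a univalent extension of the inverse branch from $P_{f,1}(0)$ to $\hW_f$ followed by Koebe distortion — is indeed the standard route and is consistent with how the cited lemma is established.

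The genuine gap is exactly the step you flag and then defer: the existence of the neighborhood, i.e.\ that the component $W'$ of $f^{-m}(\hW_f)$ containing $W$ satisfies $0\notin f^j(W')$ for $j<m$. Saying this ``is precisely what the choice of the boundary of $\hU_c$ is designed to guarantee'' is not an argument; the claim is false for arbitrary parameters and uses the combinatorics of $\cK_n$ in an essential way. What must be proved is a pleasant/nice-couple property: for $c=c(f)\in\cK_n$ the forward orbit of the critical value never meets the collar $\hW_c\setminus\cl{P_{c,1}(0)}$. This is delicate rather than automatic: on the real line the collar is (for $c$ near $-2$) the pair of intervals between $\pm\alpha(c)$ and the points of modulus $\sqrt{\gamma(c)-c}$, and orbit points such as $f_c^{n-1}(c)$ and the points $f_c^{n+3k+2}(c)$ land just outside it only because every point of $\cl{Y_c}\cup\cl{\tY_c}$ lies on the $P_{c,1}(0)$\nobreakdash-side of $\gamma(c)$ — i.e.\ the avoidance is an exact consequence of choosing the rays $R_c(7/24),R_c(17/24)$ and of the itinerary structure of $\cK_n$ (initial monotone real segment, then $f^3$\nobreakdash-orbit trapped in $\cl{Y_c}\cup\cl{\tY_c}$), and it degenerates to an equality at the boundary case $f_c^n(c)=\gamma(c)$. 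One must verify this avoidance and then run the pull-back induction showing that if some $f^j(W')$ contained $0$ while $f^j(W)$ did not, the critical value would be forced into the collar, a contradiction. Without this verification the claimed biholomorphism $f^m\colon W'\to\hW_f$ — the entire content of the lemma beyond soft Koebe estimates — is unproven, so the proposal, while correctly structured, does not constitute a proof.
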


Recall that~$g_f \colon h_f(Y_{c(f)}\cup \tY_{c(f)}) \to P_{f, 1}(0)$ is defined by~$g_f = h_f \circ g_{c(f)}\circ h_f^{-1}$, see~\S\ref{ss:uniform families}.
\begin{lemm}[\cite{CorRiv1708}, Lemma~4.11]
  \label{l:contractions}
Let~$\sF$ be a uniform family of  quadratic-like maps.
Then, there are constants $C_0 > 0$ and $\upsilon_0 > 0$ such that for every~$f$ in~$\cP_5(\sF)$, every~$\ell$ in~$\N$, and every connected
component~$W$ of $g_f^{-\ell}(P_{f,1}(0))$, we have
$$
\max\{\diam(W), \diam(f(W)),\diam(f^2(W))\} \le C_0\exp(-\upsilon_0\ell).
$$
\end{lemm}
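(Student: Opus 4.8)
The plan is to write each connected component~$W$ of~$g_f^{-\ell}(P_{f,1}(0))$ as the image of~$P_{f,1}(0)$ under a composition of~$\ell$ inverse branches of~$g_f$, to show that each of these branches contracts the hyperbolic metric of~$P_{f,1}(0)$ by a definite factor that is uniform over~$\sF$ and over~$\cP_5(\sF)$, and then to translate this into a Euclidean estimate and transfer it to~$f(W)$ and~$f^2(W)$. First I would recall that~$g_f$ coincides with~$f^3$ on~$Y_f\cup\tY_f$ and maps each of~$Y_f$ and~$\tY_f$ biholomorphically onto~$P_{f,1}(0)$; letting~$\psi_0\colon P_{f,1}(0)\to Y_f$ and~$\psi_1\colon P_{f,1}(0)\to\tY_f$ be the corresponding inverse branches, the set~$g_f^{-\ell}(P_{f,1}(0))$ has exactly~$2^\ell$ connected components, each of the form~$\psi_{i_1}\circ\cdots\circ\psi_{i_\ell}(P_{f,1}(0))$ with~$(i_1,\dots,i_\ell)\in\{0,1\}^\ell$, and each such~$W$ is a connected subset of~$Y_f\cup\tY_f$, hence of~$\cl{Y_f}\cup\cl{\tY_f}$.

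The crucial uniform input I would establish next is that~$\cl{Y_f}\cup\cl{\tY_f}\Subset P_{f,1}(0)$ with a modulus bound that does not depend on~$f$ or on~$\sF$. Because~$f\in\cP_5(\sF)$, the parameter~$c(f)$ lies in~$\cP_5(-2)$, and by Lemma~\ref{l:auxiliary para-puzzle pieces} the closure of~$\cP_5(-2)$ is a compact subset of~$\cP_3(-2)$; on this compact set the puzzle pieces~$Y_c$, $\tY_c$, $P_{c,1}(0)$ depend continuously on~$c$, with~$\cl{Y_c}\cup\cl{\tY_c}$ disjointly contained in~$P_{c,1}(0)$ (see~\S\ref{ss:expanding Cantor set}), so the moduli of~$P_{c,1}(0)\setminus\cl{Y_c}$ and of~$P_{c,1}(0)\setminus\cl{\tY_c}$ are bounded below by some~$m_0>0$; transporting by the~$K$\nobreakdash-quasiconformal conjugacy~$h_f$, which distorts moduli of annuli by at most the factor~$K$, the same holds for~$f$ with~$m_0$ replaced by~$m_0/K$. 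Likewise, being~$K$\nobreakdash-quasiconformal images of pieces of~$f_{c(f)}$ with~$c(f)$ in a compact set and with everything contained in~$B(0,R)$, the (bounded, simply connected) pieces in question have uniformly bounded Euclidean size and shape.

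With this in hand I would run the hyperbolic contraction argument on~$D\=P_{f,1}(0)$. Writing~$\lambda_\Omega$ for the hyperbolic density of a domain~$\Omega$ and~$\iota$ for the inclusion of~$Y_f\cup\tY_f$ into~$D$, monotonicity of the hyperbolic metric gives~$\lambda_D/\lambda_{Y_f\cup\tY_f}<1$ pointwise on~$Y_f\cup\tY_f$, and this ratio tends to~$0$ along~$\partial Y_f\cup\partial\tY_f$ (there~$\lambda_{Y_f\cup\tY_f}\to+\infty$ while~$\lambda_D$ stays bounded, since~$\cl{Y_f}\cup\cl{\tY_f}\Subset D$), so its supremum over~$Y_f\cup\tY_f$ is attained at an interior point and equals some~$\rho_0<1$; by the previous paragraph~$\rho_0$ can be taken uniform over~$\sF$ and~$\cP_5(\sF)$. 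As each~$\psi_{i_k}$ is a hyperbolic isometry onto its image, the derivative of~$\psi_{i_k}\colon D\to D$ in the hyperbolic metric of~$D$ has norm~$\lambda_D/\lambda_{Y_f\cup\tY_f}$ at the image point, hence at most~$\rho_0$; so~$\psi_{i_k}$ contracts the hyperbolic distance of~$D$ by the factor~$\rho_0$. Writing~$W_{i_\ell}=\psi_{i_\ell}(D)\in\{Y_f,\tY_f\}$ and~$D_1=\max\{\diam_{\mathrm{hyp}}(Y_f),\diam_{\mathrm{hyp}}(\tY_f)\}<+\infty$ (finite and uniform by the previous paragraph), this yields~$\diam_{\mathrm{hyp}}(W)=\diam_{\mathrm{hyp}}\bigl(\psi_{i_1}\circ\cdots\circ\psi_{i_{\ell-1}}(W_{i_\ell})\bigr)\le\rho_0^{\ell-1}D_1$. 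On the fixed compact set~$\cl{Y_f}\cup\cl{\tY_f}\subset D$ the hyperbolic and Euclidean metrics are bi\nobreakdash-Lipschitz equivalent with uniform constants, so the connectedness of~$W$ gives~$\diam(W)\le C_1\diam_{\mathrm{hyp}}(W)\le C_1 D_1\rho_0^{\ell-1}$. For~$f(W)$ and~$f^2(W)$, note that~$W\subset Y_f$ or~$\tY_f$, on which~$g_f=f^3$, and hence~$f$ and~$f^2$, are univalent; by Koebe's theorem and the uniform geometry these maps extend biholomorphically to a uniform neighborhood of~$\cl{Y_f}\cup\cl{\tY_f}$ with uniformly bounded distortion there (as in Lemma~\ref{l:distortion to central 0}), and, the ambient region being uniformly bounded, satisfy~$|Df|,|Df^2|\le M_0$ there for a uniform~$M_0$; since~$W$, as the univalent image of~$D$ under~$f^{3\ell}$, has uniformly bounded eccentricity, combining these gives~$\diam(f^j(W))\le M\diam(W)$ for~$j\in\{1,2\}$ with~$M$ uniform. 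Taking~$\upsilon_0\=-\log\rho_0$ and choosing~$C_0$ large enough to absorb~$M C_1 D_1$ and to cover the finitely many values of~$\ell$ not already controlled by the trivial bound~$\diam\le\diam(B(0,R))$ then finishes the argument.

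The main obstacle is making the constants~$\rho_0$, the bi\nobreakdash-Lipschitz constant, the distortion bound and~$M_0$ genuinely uniform over the whole family~$\sF$ and over all~$f$ in~$\cP_5(\sF)$; this is exactly what the axioms of a uniform family, the compactness of~$\cl{\cP_5(-2)}$ inside~$\cP_3(-2)$, and the control of moduli of annuli under~$K$\nobreakdash-quasiconformal maps are there to provide. The remaining points---the passage to the hyperbolic metric on the puzzle pieces, and the transfer from~$W$ to~$f(W)$ and~$f^2(W)$---are routine Koebe\nobreakdash-type distortion theory.
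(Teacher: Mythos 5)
This lemma is not proved in the paper you were given: it is quoted verbatim from the companion paper \cite{CorRiv1708} (Lemma~4.11), so there is no in-paper argument to compare yours against. On its own terms, your proposal follows the standard route one would expect for such a statement: write each component of $g_f^{-\ell}(P_{f,1}(0))$ as an $\ell$-fold composition of the two inverse branches of $g_f$, get a definite hyperbolic contraction on $P_{f,1}(0)$ from the compact containment $\cl{Y_f}\cup\cl{\tY_f}\subset P_{f,1}(0)$, make the constants uniform via compactness of $\cl{\cP_5(-2)}$ in the parameter plane together with the $K$\nobreakdash-quasiconformal transfer of moduli, and then push the estimate to $f(W)$ and $f^2(W)$ by a derivative bound. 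I see no step that would fail, and the overall strategy is sound.

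Two places are thinner than the rest and deserve to be made explicit. First, uniformity of the contraction factor $\rho_0<1$ does not follow from the modulus bound by mere assertion: you need the quantitative statement that if the annulus $P_{f,1}(0)\setminus\cl{Y_f}$ has modulus at least $m_0$, then the inclusion of $Y_f$ into $P_{f,1}(0)$ contracts the respective hyperbolic metrics by a factor $\rho(m_0)<1$ depending only on $m_0$ (provable, e.g., by a normal-families argument for the Riemann maps, or by Teichm\"uller-type module estimates); similarly, the uniform bound on the hyperbolic diameters of $Y_f$ and $\tY_f$ inside $P_{f,1}(0)$ should be justified, e.g., by the fact that a $K$\nobreakdash-quasiconformal map between hyperbolic domains distorts hyperbolic distances by a function of $K$ only. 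Second, in the last step the appeal to a ``uniformly bounded eccentricity'' of $W$ is both unjustified as stated (it would require extending the inverse branches univalently beyond $P_{f,1}(0)$) and unnecessary: since $f$ and $f^2$ are holomorphic on all of $P_{f,1}(0)$ with $|Df|,|Df^2|$ uniformly bounded there (Cauchy estimates plus the uniform geometry of the family), you can simply join two points of $W$ by the hyperbolic geodesic of $P_{f,1}(0)$, whose Euclidean length is at most a uniform constant times the hyperbolic diameter of $W$ because the hyperbolic density of the uniformly bounded domain $P_{f,1}(0)$ is bounded below; injectivity of $f^j$ plays no role in this diameter bound. With these two points spelled out, your argument is complete.
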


\subsection{Geometric Peierls condition}
\label{ss:criticallydriven}
Let~$n \ge 5$ be an integer and~$f$ in~$\cK_n(\sF)$.
Put 
$$
\pD' \= \{w\in \C \setminus
\pV \mid f^{m}(w)\in \pV \text{ for some } m \in \N\},
$$
and for~$w$ in $\pD'$ denote by~$m_f(w)$ the least integer~$m$ such that $f^{m}(w)\in \pV$ and
call it the \emph{first landing time of~$w$ to~$\pV$}.
The \emph{first landing map to~$\pV$} is the map~$\pL : \pD' \to \pV$ defined
by~$\pL(w) \= f^{m_f(w)}(w)$.

\begin{defi}[Geometric Peierls Condition]
\label{d:geometric Peierls}
Given~$\kappa > 0$ and $\upsilon > 0$, a quadratic-like map~$f$ in~$\sF$ satisfies the \emph{Geometric Peierls Condition with constants~$\kappa$ and $\upsilon$}, if for every~$z$ in~$L_f^{-1}(V_f)$ we have
\begin{equation*}
  \label{eq:7}
  |DL_f(z)| \ge \kappa \exp((\chicritf/2 + \upsilon) m_f(z)).
\end{equation*}
\end{defi}

\begin{prop}[\cite{CorRiv1708}, Proposition~4.3]
\label{p:transporting Peierls}
For every~$\upsilon > 0$ satisfying~$\upsilon < \frac{1}{2} \log 2$ and every~$R' >0$, there are constants~$K_1 > 1$, $n_1 \ge 6$, and~$\kappa_1 > 0$, such that the following property holds.
If~$\sF_1$ admits~$K_1$ and~$R'$ as uniformity constants, then for every integer~$n \ge n_1$, every element $f$ of~$\cK_n(\sF_1)$ satisfies the Geometric Peierls Condition with constants~$\kappa_1$ and~$\upsilon$, 
and we have
\begin{equation*}
  \label{eq:4}
  \chi_f(\beta(f)) > \chicritf + 2\upsilon,
  \end{equation*}
  \begin{equation}
  \label{e:choice of n for negative pressure}
  \chicritf > 2 \upsilon,
  \text{ and }
  \chi_f(p(f)) < \chi_f(p^+(f)) + \upsilon/4.
\end{equation}
\end{prop}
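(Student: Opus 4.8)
The plan is to prove quantitative versions of all four conclusions first for the quadratic polynomials $f_c$ with $c \in \cK_n$, and then to carry them over to $\cK_n(\sF_1)$ through the $K_1$-quasiconformal conjugacies $h_f$. Since each $h_f$ is conformal and tangent to the identity off the fixed ball $B(0, R')$ and maps the puzzle, the postcritical orbit, and the periodic points of $f_{c(f)}$ onto the corresponding objects of $f$, standard distortion estimates for $K_1$-quasiconformal maps show that every Lyapunov exponent and every first-landing derivative of $f$ agrees with the one of $f_{c(f)}$ up to a factor $1 + O_{R'}(K_1 - 1)$; choosing $K_1$ close enough to $1$ this transfer preserves each strict inequality below with room of order $\upsilon$, so it is enough to treat the polynomials. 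There the combinatorics is governed by the limit parameter $c = -2$, to which $\cK_n$ shrinks as $n \to \infty$ by Proposition~\ref{p:ps}. The branched cover $s \mapsto 2\cos(\pi s)$ semiconjugates the tent map $T$ on $[0,1]$ to $f_{-2}$ on $[-2,2]$, is a local diffeomorphism away from $z = \pm 2$, has maximal derivative at the critical point $z = 0$ (so that landing maps onto neighbourhoods of $0$ expand better than return maps), and transports the puzzle of $f_{-2}$ to the dyadic partition. Because every monotone branch of $T^{3}$ has slope $\pm 8$, one gets $|DT^{3\ell}| \equiv 8^{\ell}$; hence the $g_{-2} = f_{-2}^{3}$-expansion rate along any orbit inside the Cantor set $\Lambda_{-2}$ equals $\tfrac13\log 8 = \log 2$, in particular $\chi_{f_{-2}}(p(f_{-2})) = \chi_{f_{-2}}(p^{+}(f_{-2})) = \log 2$, while $\chi_{f_{-2}}(\beta(-2)) = \log|Df_{-2}(2)| = \log 4 = 2\log 2$.

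Next I would deduce the three displayed inequalities for $f_c$ with $c \in \cK_n$ by continuity in $c$. The fixed point $\beta(c)$ and the hyperbolic Cantor set $\Lambda_c$ with its marked fixed points $p(c), p^+(c)$ of $g_c$ vary holomorphically with $c$, so $\chi_{f_c}(\beta(c)) \to 2\log 2$ and $\chi_{f_c}(p(c)), \chi_{f_c}(p^+(c)) \to \log 2$ as $c \to -2$. For the critical exponent one uses that for $c \in \cK_n$ the postcritical orbit enters $\Lambda_c$ after $n$ steps: for large $k$, $\tfrac1k\log|Df_c^k(f_c(c))|$ equals, up to an $O(n/k)$ term, the exponential growth rate of $|Dg_c^{\ell}|$ along the $g_c$-orbit of $f_c^n(c) \in \Lambda_c$, and the latter is uniformly close to $\log 2$ once $c$ is close to $-2$; hence $\chi_{\crit}(f_c) \to \log 2$. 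As $\upsilon < \tfrac12\log 2$ is a fixed number, for all $n \ge n_1(\upsilon)$ we obtain $\chi_{f_c}(\beta(c)) > \chi_{\crit}(f_c) + 2\upsilon$ (since $2\log 2 > \log 2 + 2\upsilon$), $\chi_{\crit}(f_c) > 2\upsilon$ (since $\log 2 > 2\upsilon$), and $\chi_{f_c}(p(c)) < \chi_{f_c}(p^+(c)) + \upsilon/4$ (the two sides share the limit $\log 2$), each with slack bounded below by a fixed positive multiple of $\min\{\upsilon,\ \tfrac12\log 2 - \upsilon\}$, which is exactly what the quasiconformal transfer consumes. (Together with the admissibility relation \eqref{eq:5}, the last inequality then pins $1 < \theta(f) < \exp(3\upsilon/8)$.)

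The core is the Geometric Peierls Condition $|DL_{f_c}(z)| \ge \kappa_1 \exp\!\big((\chi_{\crit}(f_c)/2 + \upsilon)\, m_{f_c}(z)\big)$ for the first landing map onto the depth-$(n+1)$ critical puzzle piece $V_{f_c} = f_c^{-1}(P_{f_c,n}(-\beta(c)))$, which has diameter of order $2^{-n}$. Given $z$ in its domain with $m = m_{f_c}(z)$, I would split the orbit $z, f_c(z), \dots, f_c^{m-1}(z)$, which avoids $V_{f_c}$, into \emph{free} stretches disjoint from a fixed neighbourhood $W_0$ of $0$ --- on which, by Ma\~n\'e's theorem applied to the non-recurrent critical point, $f_c$ is uniformly expanding with rate close to $\log 2$ --- and \emph{bound} stretches, each triggered by an entry into $W_0$ at some distance $\delta \gtrsim \diam V_{f_c}$ from $0$. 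A bound stretch shadows the postcritical orbit, so the relevant inverse branch of the corresponding iterate extends univalently to the nice puzzle piece $P_{f_c,4}(0) \supset V_{f_c}$ and, by the Koebe distortion theorem, contributes a derivative factor comparable to $\delta^{-1}$; moreover its length is $O(\log(1/\delta)) = O(n)$, because during its first $n-1$ steps the postcritical orbit expands at rate $\log 4 = 2\log 2$ (it hugs the repelling point near $2$), so the discrepancy $\asymp \delta^{2}\,|Df_c^{j}(c)|$ reaches unit scale after only $j \asymp \log(1/\delta)$ steps. Hence each bound stretch expands at rate at least $\chi_{\crit}(f_c) - O(1/n)$, the free stretches only improve the total, and multiplying the stretches while controlling distortion uniformly gives $|DL_{f_c}(z)| \ge \kappa_1 \exp\!\big((\chi_{\crit}(f_c) - O(1/n))\, m\big)$; since $\chi_{\crit}(f_c)/2 + \upsilon \le \tfrac12\log 2 + \upsilon < \log 2 \approx \chi_{\crit}(f_c)$ with a fixed gap, this beats $(\chi_{\crit}(f_c)/2 + \upsilon)m$ once $n \ge n_1(\upsilon)$. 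I expect this last step to be the real obstacle: one must simultaneously control the distortion of arbitrarily long bound stretches through the nice-set extension property and make the assertion ``bound periods have length $O(n)$'' quantitative --- i.e., that because the postcritical orbit expands at rate $\log 4$ throughout its first $n$ steps, a close approach at distance $\delta \gtrsim 2^{-n}$ can trigger a bound period of length at most $n + O(1)$ --- so that the expansion rate of a landing orbit never drops to $\chi_{\crit}(f_c)/2 + \upsilon$ or below. Once these polynomial estimates are established, the quasiconformal transfer from the first paragraph yields the proposition, uniformly over all $\sF_1$ admitting $K_1$ and $R'$ as uniformity constants.
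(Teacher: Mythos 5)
This paper does not actually prove Proposition~\ref{p:transporting Peierls}; it is quoted from~\cite{CorRiv1708} (Proposition~4.3 there), so the only comparison I can make is with the strategy of that companion paper. Your overall route coincides with it: establish the estimates for the quadratic polynomials $f_c$, $c\in\cK_n$, using that the postcritical orbit spends its first roughly $n$ iterates near the $\beta$-fixed point, where the derivative is close to $4$, and afterwards lies in the uniformly expanding Cantor set $\Lambda_c$, where the rate is close to $\log 2$ (whence $\chi_{\crit}\approx\log 2$, $\chi_f(\beta)\approx 2\log 2$, $\chi_f(p)\approx\chi_f(p^+)\approx\log2$, and the fact that points in the domain of the first landing map to $P_{f,n+1}(0)$ stay at distance $\gtrsim 2^{-n}$ from the critical point, so bound periods have length $n+O(1)$ and fall inside the initial segment); then ``transport'' everything to $\cK_n(\sF_1)$ through the straightening conjugacies $h_f$, with dilatation $K_1$ so close to $1$ that the loss is absorbed by the slack $\upsilon$ budgeted at the polynomial level. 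This is exactly the spirit of the cited proof, and your identification of the delicate points (uniform distortion along arbitrarily long bound stretches via the extension to a fixed nice piece, and the quantitative bound-period estimate) is apt.

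Two soft spots should be flagged. First, the transfer mechanism as you state it is false: a $K_1$-quasiconformal conjugacy does \emph{not} make ``every Lyapunov exponent and every first-landing derivative of $f$ agree with the one of $f_{c(f)}$ up to a factor $1+O_{R'}(K_1-1)$''. Such a uniform-in-$m$ multiplicative bound on $|Df^m|$ would force the exponents to be equal, whereas multipliers of repelling cycles are not quasiconformal invariants; a $K$-qc conjugacy only scales logarithms of derivatives (equivalently, exponents) by factors roughly in $[K^{-1},K]$, and individual derivatives are controlled only through H\"older distortion of puzzle-piece diameters combined with Koebe. This weaker statement does suffice --- it is precisely why $K_1$ must be allowed to depend on $\upsilon$ and why one needs the polynomial Peierls condition with a margin $\upsilon'>\upsilon$, uniformly in the landing time $m$ --- but the argument has to be run through diameters and bounded geometry, not through a pointwise $1+O(K_1-1)$ comparison of derivatives. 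Second, the core Peierls estimate for the polynomials is only sketched: besides the bound-period bookkeeping you acknowledge, note that when $\upsilon$ is close to $\tfrac12\log 2$ the required rate $\chi_{\crit}/2+\upsilon$ is close to $\log 2$, so on free stretches one needs expansion at rate close to $\log 2$ with controlled constants (a statement about maps near the Chebyshev parameter, not just the positive rate furnished by Ma\~n\'e's theorem), and the multiplicative constants arising at the junctions of free and bound stretches must be absorbed using the definite derivative drop $\delta\gtrsim 2^{-n}$ at the start of each bound stretch. As written, the proposal is a correct plan with these gaps left open rather than a complete proof.
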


\subsection{Controlling the geometric pressure function via the postcritical series }

\begin{prop}[\cite{CorRiv1708}, Proposition~I]
\label{p:improved MS criterion}
Let $\sF$ be a uniform family of normalized quadratic-like maps. For every $\kappa>0$ and every $\upsilon>0$ 
there are~$n_2 \ge 5$ and~$C_1 > 1$
 such that for every 
integer~$n \ge n_2$ and every $f$ in~$\cK_n(\sF)$ satisfying the Geometric Peierls Condition with constants $\kappa$ and $\upsilon$,
the following properties hold for each~$t \ge  2\log2/\upsilon$.
\begin{enumerate}
\item[1.]
For~$p$ in~$[- t \pchicrit/2, 0)$ satisfying
$$ \sum_{k = 0}^{+ \infty} \exp(- (n + 3k)p)|Df^{n + 3k} (f(0))|^{-t/2}
\ge
C_1^{t}, $$
we have $\psP^{\R}(t, p) > 0$ and~$P_f^{\R}(t) \ge p$.
If in addition the sum above is finite, then~$\psP(t, p)$ is finite and~$P_f^{\R}(t) > p$.
\item[2.]
For~$p \ge - t \pchicrit/2$ satisfying
$$ \sum_{k = 0}^{+ \infty} \exp(- (n + 3k)p)|Df^{n + 3k} (f(0))|^{-t/2}
\le
C_1^{-t}, $$
we have $\psP(t, p) < 0$ and~$P_f(t) \le p$.
\item[3.]
For~$p \ge - t \pchicrit/2$ satisfying
$$ \sum_{k = 0}^{+ \infty} k \cdot \exp(- (n + 3k)p)|Df^{n + 3k} (f(0))|^{-t/2}
<
+ \infty, $$
we have
$$ \sum_{W \in \pfD} m_f(W) \cdot \exp(- m_f(W)p) \sup_{z \in W}|D\pF(z)|^{-t}
<
+ \infty. $$
\end{enumerate}
\end{prop}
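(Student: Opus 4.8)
The plan is to reduce all three assertions to a comparison between the one-step partition functions of the first return map~$\pF$ and the postcritical series
$$
\sigma(t,p)\=\sum_{k=0}^{+\infty}\exp(-(n+3k)p)\,|Df^{n+3k}(f(0))|^{-t/2},
$$
and then to feed this into the relation between the induced pressure~$\psP$ and the pressure~$P_f$. First I would dispose of the passage from~$\psP$ to~$P_f$, which is the soft half: by the uniform distortion bound (Lemma~\ref{l:bounded distortion to nice}) and the fact that each inverse branch~$\phi_{\uW}$ maps~$\hV_f$ into~$\pV\subset\hV_f$, the sequences~$(Z_\ell(t,p))_{\ell\ge1}$ and~$(Z_\ell^{\R}(t,p))_{\ell\ge1}$ are submultiplicative and, up to a factor~$\Xi_1^{2t}$, supermultiplicative; hence~$\psP(t,p)\le\log Z_1(t,p)$, $\psP^{\R}(t,p)\ge\log\bigl(\Xi_1^{-2t}Z_1^{\R}(t,p)\bigr)$, and~$\psP(t,p)$ is finite as soon as~$Z_1(t,p)$ is. Combined with \cite[Proposition~II]{CorRiv1708} --- which gives that~$\psP(t,\cdot)$ is continuous and strictly decreasing on the set where it is finite and vanishes at~$P_f(t)$ when finite there, and likewise over~$\R$ --- a strict sign of~$\psP(t,p)$ yields~$P_f(t)\le p$, resp.~$P_f^{\R}(t)\ge p$, with the inequality becoming strict once the relevant series is also finite. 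Part~3 then follows in the same way from the time-weighted partition functions~$\sum_{\uW\in E_{f,\ell}}m_f(\uW)\exp(-m_f(\uW)p)\bigl(\sup_{\pV}|D\phi_{\uW}|\bigr)^t$, using also that~$\sup_{W}|D\pF|^{-t}$ and~$\sup_{\pV}|D\phi_W|^t$ differ by at most a factor~$\Xi_1^t$.

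The heart of the proof is to estimate~$Z_1(t,p)=\sum_{W\in\pfD}\exp(-m_f(W)p)\bigl(\sup_{\pV}|D\phi_W|\bigr)^t$, for~$p\ge-t\chicritf/2$, in terms of~$\sigma(t,p)$. The geometric input is that~$\pV=f^{-1}\bigl(P_{f,n}(\tbeta(f))\bigr)$ is a small neighbourhood of the critical point~$0$, and that~$f^j(\pV)$ is disjoint from~$\pV$ for~$1\le j\le n$ --- because~$\tbeta(f)$ maps to the fixed point~$\beta(f)$, so~$f(\pV)\subset P_{f,n}(-\beta(f))$ and~$f^j(\pV)\subset P_{f,n-j+1}(\beta(f))$ for~$2\le j\le n$, all disjoint from~$\pV\subset P_{f,1}(0)$ --- whence~$m_f(W)\ge n+1$ for every component~$W$. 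Consequently the orbit of each point of a component~$W$ first undergoes the quadratic map~$f$ at~$0$, then shadows the postcritical orbit~$f(0),f^2(0),\dots$ for some number~$j$ of steps before being repelled from it, and then lands back in~$\pV$. Factoring~$\pF|_W$ along these three stages --- using bounded distortion over the shadowing stage (the postcritical orbit stays uniformly off the critical point by the non-recurrence built into~$\cK_n$), Lemma~\ref{l:distortion to central 0} for the passage through the central puzzle piece, and the Geometric Peierls Condition (Definition~\ref{d:geometric Peierls}, Proposition~\ref{p:transporting Peierls}) for the landing stage --- one finds that the contribution of~$W$ is, up to a factor~$C_1^{\pm t}$, the product of~$\exp(-jp)\,|Df^j(f(0))|^{-t/2}$, the exponent~$-t/2$ being due to the square-root branch of~$f$ at the critical value, with a factor depending on the landing time. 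The components whose shadow survives into the~$g_f$-periodic regime have~$j=n+3k+O(1)$ for some~$k\ge0$, with boundedly many of them for each~$k$, and (using that consecutive derivatives along the postcritical orbit are comparable from time~$n$ on) the sum of their contributions equals~$\sigma(t,p)$ up to a factor~$C_1^{\pm t}$; for real~$f$ all these branches are real, so this already gives~$Z_1^{\R}(t,p)\ge C_1^{-t}\sigma(t,p)$.

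For the matching upper bound one must in addition control the contributions of the shadows that are repelled before reaching the periodic regime. Those carry, by the Geometric Peierls Condition, an extra exponential gain in their landing time, which is of length at least~$n-j$ since~$m_f(W)\ge n+1$; combined with the hypothesis~$t\ge2\log2/\upsilon$ and the choice of~$n\ge n_2$ large, this makes their total contribution negligible in comparison with the thresholds that govern the sign of~$\psP(t,\cdot)$ --- this is where~$n_2$ gets fixed. The outcome is two-sided estimates for~$Z_1(t,p)$ and~$Z_1^{\R}(t,p)$ in terms of~$\sigma(t,p)$ from which, via the first paragraph, parts~1 and~2 follow; part~3 follows from the same analysis with~$\sigma$ replaced by the time-weighted series~$\sum_{k\ge0}k\exp(-(n+3k)p)\,|Df^{n+3k}(f(0))|^{-t/2}$, the weight~$k$ being the cost of the factor~$m_f(W)=n+3k+O(1)$.

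The step I expect to be the main obstacle is the shadowing estimate of the second paragraph: one must control the derivative of each return branch by~$|Df^j(f(0))|^{-1/2}$ with a multiplicative error of the form~$C_1^{\pm t}$ that does \emph{not} deteriorate with the shadowing length~$j$, with~$n$, or across the family~$\sF$, and one must show that the contributions of the shadows which never reach the~$g_f$-periodic regime are genuinely dominated. This requires combining, uniformly, the Koebe control of the branches extended to~$\hV_f=P_{f,4}(0)$ (Lemma~\ref{l:bounded distortion to nice}), the a priori distortion bound~$\Xi_2$ through the central puzzle piece (Lemma~\ref{l:distortion to central 0}), the uniform Geometric Peierls constant~$\kappa$ for the landing branches (Proposition~\ref{p:transporting Peierls}), and the uniform separation of the postcritical orbit from the critical point; extracting the exact square-root exponent with all these constants uniform is the delicate point, and it is precisely what forces the hypothesis~$t\ge2\log2/\upsilon$ and the smallness of~$1/n$.
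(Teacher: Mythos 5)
You should first be aware that the paper never proves this proposition: it is imported verbatim from~\cite{CorRiv1708} (Proposition~I there) and merely collected in Appendix~\ref{s:estimating pressure}, so there is no in-paper proof to measure your argument against. That said, your outline follows exactly the architecture of the cited proof: compare the one-block partition function $Z_1(t,p)$ of the first return map~$\pF$ (and its real counterpart) with the postcritical series, then convert sign information on~$\psP(t,\cdot)$ into the inequalities for~$P_f^{\R}(t)$ and~$P_f(t)$ via the Bowen-type statement of~\cite[Proposition~II]{CorRiv1708}, with the soft half (submultiplicativity of~$Z_\ell$, the distortion factor~$\Xi_1^{t}$ from Lemma~\ref{l:bounded distortion to nice}, strictness from finiteness plus strict monotonicity) handled as you describe. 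The two-sided, per-level comparison that your second and third paragraphs require is precisely~\cite[Proposition~5.6]{CorRiv1708}, restated in this paper as Proposition~\ref{p:estimating Z_1 by the postcritical series}, including the weighted version that yields part~3.

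As a proof, however, the proposal has a genuine gap, and you name it yourself: the entire quantitative content --- that each return branch contracts like $|Df^{j}(f(0))|^{-1/2}$ with multiplicative errors of the form $C^{\pm t}$ uniform in~$j$, in~$n$, and over the family, and that the branches with long landing phases are summable --- is described but not established. This is exactly where the Geometric Peierls Condition, the hypothesis $t \ge 2\log 2/\upsilon$ (needed to beat the at most exponential, rate~$\log 2$, growth of the number of landing branches of a given landing time against the gain $\exp(-(\pchicrit/2+\upsilon)t\,m)$), and the choice of~$n_2$ and~$C_1$ actually enter, and none of it is carried out. In addition, one counting claim in your second paragraph is wrong as stated: for a fixed level~$k$ the components of~$\pD$ are \emph{not} boundedly many --- after leaving the critical nest a branch may take an arbitrarily long landing time back to~$\pV$, and the number of such branches grows exponentially in that time --- so the sum over level-$k$ components is dominated by the single term $\exp(-(n+3k)p)|Df^{n+3k}(f(0))|^{-t/2}$ only through the Peierls gain together with $t\ge 2\log 2/\upsilon$, i.e.\ the same mechanism you reserve for the ``repelled'' shadows, not through bounded multiplicity (for the lower bound, by contrast, picking one real component per~$k$ is indeed enough). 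With Proposition~\ref{p:estimating Z_1 by the postcritical series} granted as a black box your reduction is sound, but without a proof of that estimate the heart of the proposition remains unproved.
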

Let~$n \ge 4$ be an integer and~$f$  in~$\cK_n(\sF)$.
Since the critical point~$z = 0$ does not belong to~$\pD$ (\emph{cf}., Lemma~4.2 in \cite{CorRiv13}), for each integer~$\ell \ge 1$, each connected component of~$\pD$ intersecting~$P_{f, \ell}(0)$ is contained
in~$P_{f, \ell}(0)$.
We define the \emph{level} of a connected component~$W$ of~$\pD$ as the largest integer~$k \ge 0$ such that~$W$ is contained in~$P_{f, n + 3k + 2}(0)$.
Given an integer~$k \ge 0$ denote by~$\fD_{f,k}$ the collection of all connected components of~$\pD$ of level~$k$; we have~$\pfD = \bigcup_{k = 0}^{+ \infty} \fD_{f,k}$.

\begin{prop}[\cite{CorRiv1708}, Proposition~5.6]
\label{p:estimating Z_1 by the postcritical series}
Let $\sF$ be a uniform family of normalized quadratic-like maps. For every $\kappa>0$ and every $\upsilon>0$ 
there are~$n_3 \ge 5$ and~$C_2 > 1$ such that for every integer~$n \ge n_3$
and 
every ~$f$ in~$\cK_n(\sF)$ satisfying the Geometric Peierls Condition with constants $\kappa$ and $\upsilon$, the following properties hold for each~$t \ge 2\log 2 / \upsilon$
and each integer $k \ge 0$:
\begin{enumerate}
\item[1.]
For each~$p < 0$, we have
\begin{multline*} 
\sum_{W \in \fD_{f,k}\cap \pfD^\R} \exp(-m_f(W)p) \inf_{z\in W} |D\pF(z)|^{-t}
>  \\
C_2^{-t} \exp(- (n + 3k)p)|Df^{n + 3k} (f(0))|^{-t/2}.
\end{multline*}
\item[2.]
For each~$p \ge - t \pchicrit/2 - t \upsilon/3$, we have
\begin{multline*} 
\sum_{W \in
\fD_{f,k}} \exp(-m_f(W)p) \sup_{z\in W} |D\pF(z)|^{-t}
<  \\
C_2^{t} \exp(- (n + 3k)p)|Df^{n + 3k} (f(0))|^{-t/2}.
\end{multline*}
Moreover,
\begin{multline*} 
\sum_{W \in
\fD_{f,k}} (m_f(W)-(n+3k)) \exp(-m_f(W)p) \sup_{z\in W} |D\pF(z)|^{-t}
<  \\
C_2^{t} \exp(- (n + 3k)p)|Df^{n + 3k} (f(0))|^{-t/2}.
\end{multline*}

\end{enumerate}
\end{prop}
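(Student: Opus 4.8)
\emph{Proof proposal.} All implicit constants below are of the form $C_\sF^{\pm t}$ with $C_\sF>1$ depending only on $\sF$ (hence on the eventually chosen $n_3$, but never on $n\ge n_3$, $k$, $f$ or $p$), and $A\asymp B$ abbreviates $C_\sF^{-t}B\le A\le C_\sF^{t}B$. The plan is to reduce the proposition to a single per-component estimate: I would show that every component $W$ of $\pD$ of level $k$ satisfies $\inf_{z\in W}|D\pF(z)|\asymp\sup_{z\in W}|D\pF(z)|\asymp|Df^{n+3k}(f(0))|^{1/2}$ and $m_f(W)=(n+3k)+O(1)+m'(W)$ with $m'(W)\ge 0$ a first-landing tail; that for a fixed level $k$ there are at most $N_0=N_0(\sF)$ components with bounded tail, at least one of them contained in $I(f)$; and that the components with long tail are summable by the Geometric Peierls Condition. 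Granting this, part~1 follows by retaining only the real bounded-tail component (no summation needed), and part~2, with its weighted refinement, by summing over $\fD_{f,k}$ and absorbing the tail series into $C_2$.

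\textbf{Step 1: where the level-$k$ components are.} Fix $f\in\cK_n(\sF)$ satisfying the Geometric Peierls Condition and $W\in\fD_{f,k}$. By Lemma~\ref{l:bounded distortion to nice} the map $\pF|_W=f^{m_f(W)}|_W$ is univalent onto $\pV$, and since $0\notin\pD$ the first step $f|_W$ is univalent too, so $f^{m_f(W)-1}|_{f(W)}$ is univalent and $f(W)\subset P_{f,n+3k+1}(f(0))$. The idea is to use the explicit combinatorics of $\cK_n$ from~\S\ref{s:preliminaries} and~\cite[\S3]{CorRiv13}: the critical orbit descends the monotone staircase $f^2(0)>\cdots>f^n(0)>0$, enters the uniformly expanding Cantor set $\Lambda_f\subset Y_f\cup\tY_f$, and thereafter moves under $g_f=f^3$ along the itinerary $\iota(f)$, with $f^{n+1+3j}(0)\in Y_f\cup\tY_f$ for all $j\ge 0$. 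From this one sees that a level-$k$ component $W$ must be a small puzzle piece located near $0$ at the scale of $P_{f,n+3k+2}(0)$, whose orbit shadows the critical orbit for about $n+3k$ iterates --- lying, around iterate $n+1+3k$, in the piece $Y_f$ or $\tY_f$ prescribed by the $k$-th symbol of $\iota(f)$ --- after which a bounded maneuver through $\Lambda_f$ (followed, in general, by a first-landing detour of length $m'$) carries $f^{m_f(W)}(W)$ onto $\pV$. The same description gives the bound $N_0$ on the number of bounded-tail level-$k$ components, and --- since the critical orbit, the pieces $P_{f,\cdot}(f^j(0))$, $Y_f$, $\tY_f$ and $\pV$ are all conjugation-invariant --- a real one among them.

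\textbf{Steps 2--3: the derivative, the exponent $1/2$, the tail, and assembly.} By Lemma~\ref{l:distortion to central 0} the univalent iterates straightening $P_{f,n+3k+1}(f(0))$ onto $\hW_f$ and $f^{m_f(W)-1}|_{f(W)}$ have distortion at most $\Xi_2$; hence $\diam P_{f,n+3k+1}(f(0))\asymp|Df^{n+3k}(f(0))|^{-1}$ and, by the quadratic form $f(w)=f(0)+w^2+w^3R_f(w)$ near $0$, $\dist(W,0)\asymp\diam P_{f,n+3k+2}(0)\asymp|Df^{n+3k}(f(0))|^{-1/2}$ with $|Df(w)|\asymp\dist(W,0)$ for $w\in W$. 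Writing $|D\pF|=|Df(w)|\cdot|Df^{m_f(W)-1}(f(w))|$ and using bounded distortion along the shadowing stretch and the identification $m_f(W)-1=(n+3k)+O(1)+m'$, one gets $\sup_W|D\pF|\asymp|Df^{n+3k}(f(0))|^{1/2}\cdot|DL_f|$ --- the $n$-dependent diameters of $\pV$ and $W$ cancelling --- and this is the source of the exponent $-t/2$. For the tail, the Geometric Peierls Condition gives $|DL_f|\ge\kappa\exp\bigl((\pchicrit/2+\upsilon)m'\bigr)$, so for $p\ge-t\pchicrit/2-t\upsilon/3$,
\[
\exp(-m_f(W)p)\,\bigl(\sup_{z\in W}|D\pF(z)|\bigr)^{-t}\ \le\ C_\sF^{t}\,\exp(-(n+3k)p)\,|Df^{n+3k}(f(0))|^{-t/2}\,\exp\!\Bigl(-\tfrac{2t\upsilon}{3}m'\Bigr);
\]
since the number of level-$k$ components with tail $m'$ is at most of order $2^{m'}$, the geometric factor is summable once $\tfrac{2t\upsilon}{3}>\log 2$, which holds for every $t\ge 2\log2/\upsilon$. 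Summing the per-component estimate over $\fD_{f,k}$ (the $\le N_0$ bounded-tail terms plus this convergent series), bundling $\Xi_1,\Xi_2,N_0,\kappa$ and the tail sum into one constant $C_2>1$, and taking $n_3$ large enough that all the relevant series converge and the shadowing stretch is genuinely of length $\sim n+3k$, yields part~2; inserting the extra factor $m'\exp(-\tfrac{2t\upsilon}{3}m')$ in place of $\exp(-\tfrac{2t\upsilon}{3}m')$ yields the weighted estimate; and keeping only the real bounded-tail component from Step~1, on which $|D\pF|\asymp|Df^{n+3k}(f(0))|^{1/2}$, yields part~1.

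\textbf{Main obstacle.} Steps~2--3 are routine once the distortion lemmas and the Geometric Peierls Condition are in hand. The real difficulty is Step~1: converting, \emph{uniformly in the large parameter $n$}, the purely combinatorial condition ``$W$ has level $k$'' into the dynamical statement ``the orbit of $W$ shadows the critical orbit for $\approx n+3k$ steps and then returns to $\pV$'', and extracting from the Yoccoz-puzzle structure of $\cK_n$ both the bounded count $N_0$ of bounded-tail level-$k$ components and a real one among them. This requires tracking precisely how the cascade of puzzle pieces around $0$, the entrance into $\Lambda_f$, the itinerary $\iota(f)$, and the depth $n+1$ of $\pV$ interlock, and it is exactly here that all constants must be shown independent of $n$.
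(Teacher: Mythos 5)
First, a point of comparison: this paper does not actually prove Proposition~\ref{p:estimating Z_1 by the postcritical series}; it is imported verbatim from~\cite{CorRiv1708} (Proposition~5.6 there), and Appendix~\ref{s:estimating pressure} only collects it for use in~\S\ref{s:proofs}. So your proposal can only be measured against the strategy of that reference, and in outline it does follow the same route: factor $|D\pF|$ on a level-$k$ component as (quadratic factor near $0$) $\times$ (derivative along a stretch shadowing the critical orbit, controlled by the distortion bounds of Lemmas~\ref{l:bounded distortion to nice} and~\ref{l:distortion to central 0}) $\times$ (a first-landing factor $|D\pL|$), obtain the exponent $-t/2$ from $\operatorname{diam}P_{f,n+3k+2}(0)\asymp|Df^{n+3k}(f(0))|^{-1/2}$, control the landing tails by the Geometric Peierls Condition together with $t\ge 2\log 2/\upsilon$ and the lower bound on $p$, and get the lower bound of part~1 from a single distinguished real component. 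Your per-component computation in Steps~2--3 (including the exponent bookkeeping $-\tfrac{2t\upsilon}{3}m'$ and why $t\ge 2\log2/\upsilon$ beats the branch count) is correct as far as it goes, and the weighted estimate is handled correctly by inserting the factor $O(1)+m'$.

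However, as a standalone proof there is a genuine gap, and it is exactly the one you flag: Step~1 is asserted, not proved, and everything else rests on it. Concretely, you use without proof (i) that every $W\in\fD_{f,k}$ satisfies $m_f(W)=(n+3k)+O(1)+m'(W)$ with $m'\ge 0$ a landing time and with the $O(1)$ uniform in $n$, $k$ and $f$ (in particular $m_f(W)\ge n+3k$, which part~1 needs since $p<0$); (ii) that the number of level-$k$ components with tail $m'$ is $\le C\,2^{m'}$ \emph{uniformly in $n$ and $k$} --- the crude branch count only gives $2^{n+3k+O(1)+m'}$, so this requires the injection of $\fD_{f,k}$ into first-landing components inside a bounded-depth region via the univalent pullback along the critical orbit, which in turn requires knowing precisely where univalence breaks down in the last few steps (the low-depth pieces around $f^{j}(0)$ do contain $0$ once the orbit is in $P_{f,1}(0)$, so Lemma~\ref{l:distortion to central 0} cannot be applied blindly along the whole stretch); and (iii) the existence, for every $k$, of a real level-$k$ component with bounded tail and with level exactly $k$, which is what produces the lower bound in part~1. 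Items (i)--(iii) are precisely the combinatorial content of the cited proof (and of the analogous analysis in~\cite{CorRiv13}, \S4), so the proposal is a correct plan with the decisive lemma deferred rather than a proof. A minor additional slip: the opening sentence of your plan claims $\sup_{W}|D\pF|\asymp|Df^{n+3k}(f(0))|^{1/2}$ for \emph{every} level-$k$ component, which is false for components with long landing tails; your later formula $\sup_W|D\pF|\asymp|Df^{n+3k}(f(0))|^{1/2}\,|D\pL|$ is the statement you actually need (the equivalence $\inf_W|D\pF|\asymp\sup_W|D\pF|$ itself is free from Lemma~\ref{l:bounded distortion to nice}).
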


\subsection{Estimating the postcritical series}
\label{ss:estimating postcritical series}
Denote by~$\hSigma$ the set of all those sequences~$(\whx_j)_{j \in \N_0}$ in~$\{0, 1^+, 1^- \}^{\N_0}$ such that for each~$j$ in~$\N_0$ satisfying~$\whx_j = 1^+$ (resp.~$\whx_j = 1^-$), we have~$\whx_{j + 1} \neq 1^-$ (resp.~$\whx_{j + 1} \neq 1^+$).
A sequence~$(x_j)_{j \in \N_0}$ in~$\{0, 1 \}^{\N_0}$ is \emph{compatible} with a sequence~$(\whx_j)_{j \in \N_0}$ in~$\hSigma$ if for every~$j$ in~$\N_0$ such that~$\whx_j = 0$ (resp. $\whx_j = 1^+$, $\whx_j = \whx_{j + 1} = 1^-$), we have~$x_j = 0$ (resp. $x_j = 1$, $x_j \neq x_{j + 1}$).

Fix a sequence~$(\whx_j)_{j \in \N_0}$ in~$\hSigma$.
Define~$N : \N_0 \to \N_0$ by~$N(0) \= 0$, and for~$k$ in~$\N$ by
$$ N(k)
\=
\sharp \{ j \in \{0, \ldots, k-1 \} \mid \whx_j = 0 \}. $$
Moreover, define~$B : \N_0 \to \N_0$ by~$B(0) \= 0$, $B(1) \= 1$, and for~$k\ge 2$ by
$$ B(k)
\=
1 + \sharp \{ j \in \{0, \ldots, k - 2 \}
 \mid \whx_j \neq \whx_{j + 1} \}. $$
Note that for~$k$ in~$\N$ the function~$B(k)$ is equal to the number of blocks of~$0$'s, $1^+$'s, and~$1^-$'s in the sequence~$(\whx_j)_{j = 0}^{k-1}$.

Throughout the rest of this subsection, fix a uniform family of quadratic-like maps~$\sF$, and let~$\Xi_2 > 1$ be the constant given by Lemma~\ref{l:distortion to central 0}.
Note that for~$n \ge 5$ and~$f$ in~$\cK_n(\sF)$, the condition~$\theta(f) > 1$ is equivalent to~$\chi_f(p(f)) > \chi_f(\wtp(f))$, see~\eqref{d:theta n} for the definition of~$\theta(f)$.
When this holds, define
$$ \xi(f)
\=
\frac{\log \Xi_2}{2 \log \theta(f)}. $$
If in addition the itinerary~$\iota(f)$ is compatible with~$(\whx_j)_{j \in \N_0}$, let~$\xi \ge \xi(f)$ be given and define for each integer~$k \ge 0$ and each~$(\tau, \lambda)$ in~$[0, + \infty) \times [0, + \infty)$,
\[
 \pi_{f,k}^{\pm}(\tau,\lambda)
\=
2^{- \lambda k - \tau N(k) \pm \xi \tau B(k)}.
\]

\begin{lemm}[\cite{CorRiv1708}, Lemma~6.1]
\label{l:2 variables functions}
There  is~$\Xi_3>1$ such that the following property holds. Let~$\uwhx$ be a sequence in~$\hSigma$ such that~$N(k)/k \to 0$ as~$k \to + \infty$, and such that the length of every maximal block  containing only $1^-$'s is even.
Then, for every integer~$n \ge 6$ and every~$f$ in~$\cK_n(\sF)$ such that
$ \chi(\whp(f)) = \chi(\wtp(f)), $
and such that the itinerary~$\iota(f)$ is compatible with~$\uwhx$, 
we have
 \begin{equation*}\label{e:chicrit}
\chicritf
=
\frac{1}{3} \log |Dg_f(\wtp(f))|.
 \end{equation*}
If moreover 
$$ \chi_f(p(f)) > \chi_f(\wtp(f))$$
the following property holds for every choice of~$\xi \ge \xi(f)$.
For every integer~$k \ge 0$, and all $t>0$ and $\delta \ge 0$, we have
\begin{multline*}
\label{eq:2 variables series}
\Xi_3^{- \frac{t}{2}} \exp(-n\delta)\left( \frac{\exp(\chicritf)}{|Df(\beta(f))|}
\right)^{\frac{t}{2}n }
\pi_{f, k}^- \left( \frac{\log \theta(f)}{\log 2} t, \frac{3 \delta}{\log 2}
\right)
\\ 
\begin{aligned}
& \le
\exp \left(-(n+3k) \left(- t \frac{\chicritf}{2} + \delta \right) \right)
|Df^{n+3k}(f(0))|^{-\frac{t}{2}}
\\ & \le
\Xi_3^{\frac{t}{2}} \exp(-n\delta) \left( \frac{\exp (\chicritf)}{|Df(\beta(f))|}
\right)^{\frac{t}{2} n }
\pi_{f, k}^+ \left( \frac{\log \theta(f)}{\log 2} t, \frac{3 \delta}{\log 2}
\right).
\end{aligned}
\end{multline*}
\end{lemm}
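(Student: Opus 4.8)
The plan is to prove the two-sided bound by a chain-rule decomposition of the derivative along the critical orbit, tracking every distortion constant back to the uniform estimates of \cite{CorRiv1708}. First I would dispose of $\delta$: since $\exp(-n\delta)\cdot 2^{-\frac{3\delta}{\log 2}k}=\exp(-(n+3k)\delta)$, multiplying the three members of the asserted chain of inequalities by $\exp(-(n+3k)\delta)$ reduces the case of arbitrary $\delta\ge 0$ to the case $\delta=0$. Writing $\tau=\frac{\log\theta(f)}{\log 2}t$, raising everything to the power $-2/t$, and setting $M_{n,k}\=|Df(\beta(f))|^{n}e^{3k\chicritf}\theta(f)^{2N(k)}$, the case $\delta=0$ becomes equivalent to the single multiplicative statement
$$ \Xi_3^{-1}\,M_{n,k}\,\theta(f)^{-2\xi B(k)}\ \le\ |Df^{n+3k}(f(0))|\ \le\ \Xi_3\,M_{n,k}\,\theta(f)^{2\xi B(k)} $$
for a uniform constant $\Xi_3>1$; this is what I would establish, together with the formula $\chicritf=\tfrac13\log|Dg_f(\wtp(f))|$.

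Using $f^{n+3k}(f(0))=f^{n+1+3k}(0)$, I would factor the orbit as $f(0)\xrightarrow{f^{n}}f^{n+1}(0)\xrightarrow{g_f^{k}}f^{n+1+3k}(0)$, so that $Df^{n+3k}(f(0))=Dg_f^{k}(f^{n+1}(0))\cdot Df^{n}(f(0))$. The combinatorics defining $\cK_n(\sF)$ place $f(0)$ in a deep puzzle piece about $-\beta(f)$ and its images $f^{2}(0),\dots,f^{n}(0)$ in the nested puzzle pieces about the repelling fixed point $\beta(f)$, with $f^{n}$ univalent on the piece containing $f(0)$. Combining Lemma~\ref{l:distortion to central 0} with the geometric shrinking of these pieces and the uniform Lipschitz control of $\log|Df|$ near the Julia set, the sum $\sum_{i=1}^{n}\bigl|\log|Df(f^{i}(0))|-\log|Df(\beta(f))|\bigr|$ is dominated by a convergent geometric series, so $|Df^{n}(f(0))|\asymp|Df(\beta(f))|^{n}$ with a constant uniform both in $n$ and over the family.

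For the Cantor factor, the point $f^{n+1}(0)$ lies in the maximal invariant set $\Lambda_f$ of $g_f$ with $g_f$-coding $(\iota(f)_j)_{j\ge 0}$, compatible with $\uwhx$. I would split the length-$k$ orbit under $g_f$ into the $B(k)$ maximal blocks of $\uwhx$: over a $0$-block the orbit shadows the $g_f$-fixed point $p(f)\in Y_f$, over a $1^+$-block it shadows the $g_f$-fixed point $\wtp(f)\in\tY_f$, and over a $1^-$-block, which has even length and whose $\iota$-coding alternates, it shadows the period-$2$ orbit $\{\whp(f),g_f(\whp(f))\}$. By Lemma~\ref{l:contractions} the pullback pieces involved have diameters bounded by $C_0 e^{-\upsilon_0 m}$, where $m$ is the distance to the end of the block, so summing the Lipschitz corrections across a block shows that $|Dg_f^{(\text{block length})}|$ equals the product of the multipliers of the shadowed periodic point over that block, up to one factor in $[\Xi_2^{-1},\Xi_2]$ (after enlarging $\Xi_2$ once, if necessary, so that it absorbs these corrections). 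Over a $0$-block of length $\ell$ the product is $|Dg_f(p(f))|^{\ell}$; over a $1^+$-block it is $|Dg_f(\wtp(f))|^{\ell}$; over a $1^-$-block it is $|Dg_f^{2}(\whp(f))|^{\ell/2}$, which by the hypothesis $\chi_f(\whp(f))=\chi_f(\wtp(f))$ equals $|Dg_f(\wtp(f))|^{\ell}$ as well --- this is the cancellation that makes $1^+$- and $1^-$-blocks contribute the same per-step factor. Multiplying over the $B(k)$ blocks, and recalling that $N(k)$ is exactly the total length of the $0$-blocks, gives $|Dg_f^{k}(f^{n+1}(0))|$ comparable to $|Dg_f(p(f))|^{N(k)}|Dg_f(\wtp(f))|^{k-N(k)}=|Dg_f(\wtp(f))|^{k}\theta(f)^{2N(k)}$, with ratio in $[\Xi_2^{-B(k)},\Xi_2^{B(k)}]$.

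Multiplying the transient and Cantor factors, $|Df^{n+3k}(f(0))|$ is comparable to $|Df(\beta(f))|^{n}|Dg_f(\wtp(f))|^{k}\theta(f)^{2N(k)}$ up to a factor in $[C^{-1}\Xi_2^{-B(k)},C\Xi_2^{B(k)}]$ for a uniform $C$. Taking $\tfrac1{n+3k}\log$ and letting $k\to\infty$: by hypothesis $N(k)/k\to 0$, and $B(k)\le 2N(k)+1$ since in any sequence in $\hSigma$ two non-$0$ blocks are separated by a $0$-block, so $B(k)/k\to 0$ too; hence $\tfrac1{n+3k}\log|Df^{n+3k}(f(0))|\to\tfrac13\log|Dg_f(\wtp(f))|$, and since the derivative at an intermediate time differs from the one at the nearest $n+3k$ by a uniformly bounded factor, the lower limit over all times equals this limit, proving $\chicritf=\tfrac13\log|Dg_f(\wtp(f))|$ and hence $|Dg_f(\wtp(f))|^{k}=e^{3k\chicritf}$. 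Substituting this, and bounding $\Xi_2^{\pm B(k)}$ by $\theta(f)^{\pm 2\xi B(k)}$ --- legitimate since $\xi\ge\xi(f)=\frac{\log\Xi_2}{2\log\theta(f)}$ forces $\theta(f)^{2\xi}\ge\Xi_2$ --- yields the displayed multiplicative estimate with $\Xi_3$ the uniform constant assembled from $C$, $\Xi_2$, $C_0$ and $\upsilon_0$; raising back to the power $-t/2$ recovers the statement. The conceptual skeleton here is routine; I expect the main obstacle to be the distortion bookkeeping: that the constant in $|Df^{n}(f(0))|\asymp|Df(\beta(f))|^{n}$ does not deteriorate as $n\to\infty$ (a geometric-series argument resting on uniform expansion at $\beta(f)$), and that every per-block and shrinking constant is genuinely uniform over the family --- which is exactly where one must lean on the uniform-family hypotheses and on Lemmas~\ref{l:distortion to central 0} and \ref{l:contractions}.
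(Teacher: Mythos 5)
First, note that this paper does not prove the lemma at all: it is quoted verbatim from \cite{CorRiv1708}, Lemma~6.1, so there is no in-paper argument to compare with. Judged on its own terms, your architecture is the right one: the reduction eliminating $\delta$ and the reformulation as a two-sided estimate for $|Df^{n+3k}(f(0))|$ are algebraically correct; the splitting into a transient phase shadowing $\beta(f)$ (handled by a backwards geometric series, uniformly in $n$) and a Cantor phase under $g_f$ decomposed into the maximal blocks of $\uwhx$ is exactly what the structure of the stated bound (the factor $(\exp(\chicritf)/|Df(\beta(f))|)^{tn/2}$ and the exponents $N(k)$, $B(k)$) calls for; the cancellation $|Dg_f^2(\whp(f))|^{\ell/2}=|Dg_f(\wtp(f))|^{\ell}$ from $\chi_f(\whp(f))=\chi_f(\wtp(f))$ together with the evenness of the $1^-$-blocks is the key point and you identify it; and the derivation of $\chicritf=\tfrac13\log|Dg_f(\wtp(f))|$ from $N(k)/k\to0$ and $B(k)\le 2N(k)+1$ (non-$0$ blocks in $\hSigma$ are separated by $0$-blocks) is correct.

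The genuine gap is the per-block error constant. You obtain the comparison of $|Dg_f^{\ell}|$ along a block with the multiplier of the shadowed periodic orbit by summing Lipschitz corrections against the diameters from Lemma~\ref{l:contractions}, and you then propose to ``enlarge $\Xi_2$'' to absorb the resulting constant. That is not permitted by the statement: the conclusion is asserted for \emph{every} $\xi\ge\xi(f)$, and $\xi(f)=\frac{\log\Xi_2}{2\log\theta(f)}$ is pinned to the specific distortion constant $\Xi_2$ of Lemma~\ref{l:distortion to central 0}. If your per-block constant is some $\Xi_2'>\Xi_2$, your estimate only dominates $\theta(f)^{2\xi B(k)}$ for $\xi\ge\frac{\log\Xi_2'}{2\log\theta(f)}>\xi(f)$, and since the error enters raised to the power $B(k)$, which is unbounded along the sequences the paper actually uses, the deficit $(\Xi_2'/\Xi_2)^{B(k)}$ cannot be hidden in the $k$-independent constant $\Xi_3$; so as written you prove a strictly weaker lemma (the downstream use in~\S\ref{ss:proof of pressure estimates}, via \eqref{e:supxi} and $2^{\tau\xi}\le\Xi_2^t$, is sensitive to exactly this normalization). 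The repair is simple and is clearly the intended mechanism: over a maximal block of length $\ell$ the critical-orbit point at the start of the block and the shadowed periodic point ($p(f)$, $\wtp(f)$, or the appropriate phase of the $g_f$-cycle of $\whp(f)$) have the \emph{same} $g_f$-itinerary for $\ell$ steps, hence lie in the same connected component of $f^{-3\ell}(P_{f,1}(0))$ on which $f^{3\ell}$ is univalent onto $P_{f,1}(0)$; Lemma~\ref{l:distortion to central 0} applied once to that component gives the ratio of the two derivatives within $[\Xi_2^{-1},\Xi_2]$ exactly, one factor of $\Xi_2$ per block, with no Lipschitz summation, no use of Lemma~\ref{l:contractions}, and no enlargement. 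With that substitution (and keeping your treatment of the truncated last block, where the same cylinder argument applies with the shorter length), the rest of your bookkeeping goes through.
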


\section{Estimating the positive temperature~2 variables series}
\label{s:reduction}
\subsection{Preliminary estimates}
\label{ss:positive temperature partition}
Given an integer~$\Xi \ge 0$, let~$q \ge 100 (\Xi + 1)$ be a sufficiently large integer such that~$2^{q - 3} \ge q + 1 + \Xi$.
Define the cubic function
$$ \begin{array}{cccl}
 Q : & \R & \to & \R \\
       &  s  & \mapsto & Q(s) \= q s^3.
\end{array} $$
For each real number~$s$ in~$[0,+\infty)$ define 
\[
a_s \= 2^{Q(s)} \text{ and } b_s\= 2^{Q(s)} + Q(s + 1) - Q(s)  + \Xi,
\]
and the following intervals of~$\R$:
\[
I_s
\=
\left[ a_s, b_s \right)
\text{ and }
J_s
\=
\left[ b_s, a_{s+1} \right).
\]
Note that~$|I_0| = q + \Xi$, and that for integer values of~$s$, the intervals~$I_s$ and~$J_s$ form a partition of $[1,+\infty)$ that we use in~\S\ref{ss:subfamily} to  define a certain family of itineraries.
For~$s$ in~$[0,+\infty)$ that is not necessarily an integer, the interval~$J_s$ is used in the proof of Lemmas~\ref{l:positive temperature first floor}   and~\ref{l:zero temperature tower} in~\S\ref{ss:estimating the 2 variables series} and~\S\ref{ss:estimating the weighted 2 variables series}, respectively.

Define the function $N : \N_0 \to \N_0$, by $N(0) \= 0$, and for~$k$ in~$\N$ by
\[
N(k)
\=
\sharp \left\{ j \in \{0, \ldots, k-1 \} \mid j + 1 \in \bigcup_{s=0}^{+\infty} I_s \right\},
\]
and the function~$B : \N_0 \to \N_0$ by $B(0) \= 0$, and for~$s$ in~$\N_0$ by
\begin{equation}
  \label{e:B}
B^{-1}(2s + 1) = I_s
\quad \text{and} \quad
B^{-1}(2(s + 1)) = J_s.
\end{equation}

Observe that for every~$s$ in~$\N_0$, we have for every~$k$ in~$J_s$
\begin{equation}
\label{eq:N J}
N(k)
=
\sum_{j = 0}^s |I_j|
=
\sum_{j = 0}^s (Q(j + 1) - Q(j) + \Xi)
 =
Q(s+1) + \Xi \cdot (s+1)
\end{equation}
and for every~$k$ in~$I_s$
\begin{equation}
\label{eq:N I}
N(k) = k + 1 - a_s + Q(s) + \Xi s.
\end{equation}

Let~$\xi > 0$ be given, put~$\Xi \= \lceil 2 \xi \rceil + 1$, and let~$q$ be as  before.
For~$s$ in~$\N_0$ define the following~2 variables series on~$[0,+\infty) \times [0,+\infty)$,
\begin{equation*}
I_s^{\pm}(\tau, \lambda)
\=
\sum_{k\in  I_s} 2^{- \lambda k -\tau N(k) \pm \tau \xi B(k)}
\quad \text{and} \quad
J_s^{\pm}(\tau, \lambda)
\=
\sum_{k\in J_s} 2^{- \lambda k -\tau N(k) \pm \tau \xi B(k)},
\end{equation*}
and put
$$ \Pi^{\pm}(\tau,\lambda)
\=
1 + \sum_{s=0}^{+\infty} I_s^{\pm}(\tau, \lambda) + \sum_{s=0}^{+\infty} J_s^{\pm}(\tau, \lambda). $$
Note that by~\eqref{e:B} and~\eqref{eq:N J}, for all~$j$ in~$\N_0$ and all $\tau > 0$ we have
\begin{equation}
\label{e:J formula}
J_j^{\pm}(\tau,\lambda)
=
2^{-  \tau Q(j+1) - (\Xi \mp 2 \xi) \tau \cdot (j+1)} \sum_{k \in J_j} 2^{- \lambda k}.
\end{equation}
For every real number~$s$ in~$[0,+\infty)$ define
\[
\lambda(s) \= \frac{1}{|J_s|}.
\]
By part~2 of Lemma~\ref{l:the itinerary} (below) and the hypothesis~$q \ge 100(\Xi+1)$, we have 
\begin{equation}
\label{e:lambda upper bound}
0 < \lambda(s) \le 1/4.
\end{equation}

In~\S\ref{ss:estimating the 2 variables series} we estimate $\Pi^+$ from above and $\Pi^-$ from below for some particular values of the variable $\lambda$ depending on the variable $\tau$.
In~\S\ref{ss:estimating the weighted 2 variables series} we estimate a weighted version of the series $\Pi^+$ from above.

We finish this subsection with a lemma collecting some simple estimates that are extensively used in what follows.
\begin{lemm}
\label{l:the itinerary}
The  following properties hold:
\begin{enumerate}
 \item[1.] For each real number~$s \ge 0$, we have
$b_s \le a_{s+1}/2$ and the function $\lambda(s)$ is strictly decreasing;
 \item[2.] For each real number~$s \ge 0$, we have
$a_{s+1}/2 \le |J_s|$;
\item[3.] For every $s\ge 1$, we have $b_s + 2(s+1)^2\le \frac{5}{4} a_s$;
\item[4.] Given $s\ge 1$, put $s_0 \= \lceil s \rceil$. Then, we have $b_{s_0} + 2s_0^2\le 3 |J_s|$.
\end{enumerate}
\end{lemm}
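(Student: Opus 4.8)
The plan is to derive all four assertions from the single elementary observation that $Q(s+1)-Q(s)=q\big(3s^{2}+3s+1\big)\ge q$, so that $a_{s+1}=2^{Q(s+1)}\ge 2^{q}\,a_{s}$; consequently $a_{s}$, and hence $b_{s}=a_{s}+\big(Q(s+1)-Q(s)+\Xi\big)$ and $|J_{s}|=a_{s+1}-b_{s}$, grow doubly exponentially in $s$, whereas the ``correction'' quantities $Q(s+1)-Q(s)+\Xi$, $2(s+1)^{2}$, $2s_{0}^{2}$ are only polynomial in $s$, with coefficients bounded in terms of $q$ because $\Xi\le q$ (from $q\ge 100(\Xi+1)$). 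A preliminary remark streamlines the list: since $J_{s}=[b_{s},a_{s+1})$ we have $|J_{s}|=a_{s+1}-b_{s}$, so the inequality $a_{s+1}/2\le|J_{s}|$ of part~(2) is literally the same as $b_{s}\le a_{s+1}/2$, the first inequality of part~(1). Thus it suffices to prove (i)~$b_{s}\le a_{s+1}/2$ for real $s\ge 0$, (ii)~$\lambda(s)$ strictly decreasing, (iii)~part~(3), and (iv)~part~(4).

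For (i), writing $a_{s+1}/2=2^{Q(s)}\cdot 2^{\,Q(s+1)-Q(s)-1}$ the claim becomes $2^{Q(s)}\big(2^{\,Q(s+1)-Q(s)-1}-1\big)\ge Q(s+1)-Q(s)+\Xi$. For $s=0$ this reads $2^{q-1}-1\ge q+\Xi$, which follows from the standing hypothesis $2^{q-3}\ge q+1+\Xi$ on multiplying by $4$. For $s\ge 1$ one has $2^{Q(s)}\big(2^{\,Q(s+1)-Q(s)-1}-1\big)\ge \tfrac14\,2^{Q(s+1)}$ (using $Q(s+1)-Q(s)\ge q\ge 2$), and $\tfrac14\,2^{Q(s+1)}$ exceeds $Q(s+1)-Q(s)+\Xi\le 2\,Q(s+1)$ because $2^{n}\ge 8n$ once $n\ge 6$ while $Q(s+1)\ge q\ge 100$. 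For (ii), since $\lambda(s)=1/|J_{s}|$ it is enough to check that $|J_{s}|=2^{Q(s+1)}-2^{Q(s)}-\big(Q(s+1)-Q(s)\big)-\Xi$ is strictly increasing; differentiating in $s$, the term $(\ln 2)\,Q'(s+1)\,2^{Q(s+1)}$ alone dominates both $(\ln 2)\,Q'(s)\,2^{Q(s)}$ (by the factor $\ge 2^{q}$ between $2^{Q(s+1)}$ and $2^{Q(s)}$) and the polynomial term $Q'(s+1)-Q'(s)$, so the derivative is positive for all $s\ge 0$.

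For (iii), since $b_{s}+2(s+1)^{2}=a_{s}+\big(Q(s+1)-Q(s)+\Xi+2(s+1)^{2}\big)$, the bound $b_{s}+2(s+1)^{2}\le\tfrac54 a_{s}$ is equivalent to $Q(s+1)-Q(s)+\Xi+2(s+1)^{2}\le\tfrac14\,2^{Q(s)}$; for $s\ge 1$ the right side is at least $\tfrac14\,2^{q}$ and the left side is at most an absolute multiple of $q(s+1)^{2}$ (using $\Xi\le q$ and $3s^{2}+3s+2\le 3(s+1)^{2}$), so the doubly-exponential lower bound $\tfrac14\,2^{qs^{3}}$ again wins. For (iv), put $s_{0}=\lceil s\rceil$, so $1\le s_{0}\le s+1$ and hence $2^{Q(s_{0})}\le 2^{Q(s+1)}$; combining with part~(2), now available, gives $3|J_{s}|\ge\tfrac32\,2^{Q(s+1)}$, while $b_{s_{0}}+2s_{0}^{2}\le 2^{Q(s+1)}+\big(q\big((s_{0}+1)^{3}-s_{0}^{3}\big)+\Xi+2s_{0}^{2}\big)\le 2^{Q(s+1)}+C\,q(s+1)^{2}$ for an absolute constant $C$, and $C\,q(s+1)^{2}\le\tfrac12\,2^{Q(s+1)}$ for $s\ge 1$ by the same estimate.

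I expect no genuine difficulty: each step is a comparison of a double exponential in $q$ or in $qs^{3}$ against a polynomial in $s$ whose coefficients are a fixed multiple of $q$. The only place where the precise form $2^{q-3}\ge q+1+\Xi$ of the hypothesis is used, rather than merely ``$q$ sufficiently large'', is the base case $s=0$ of~(i); elsewhere the bounds $q\ge 100(\Xi+1)$ and $\Xi\le q$ suffice, so the only real care needed is to keep track of these inequalities uniformly in $s$.
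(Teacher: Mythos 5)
Your proposal is correct and uses essentially the same elementary approach as the paper: comparing the double exponential $2^{Q(s)}$ against the polynomial correction terms, with the base cases coming from the hypotheses $2^{q-3}\ge q+1+\Xi$ and $q\ge 100(\Xi+1)$ and the extension to all real $s$ handled by monotonicity/derivative positivity (the paper phrases parts~1 and~3 entirely via positive derivatives plus a base case, and gets part~4 from parts~2 and~3, whereas you argue pointwise and prove part~4 directly from part~2, a negligible difference). The only slip is the case split in your proof of (i) into $s=0$ and $s\ge 1$, which formally leaves $s\in(0,1)$ uncovered; since your $s\ge 1$ argument uses only $Q(s+1)-Q(s)\ge q\ge 2$ and $Q(s+1)\ge q\ge 100$, it applies verbatim to every real $s\ge 0$, so this is cosmetic.
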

\begin{proof}
Part~1, case~$s = 0$ is given by our hypothesis~$2^{q - 3} \ge q + 1 + \Xi$.
For~$s > 0$, it follows from this and from the fact that, by the hypothesis $q\ge 100(\Xi+1)$, the derivative of the function
$$ s \mapsto 2^{q(s + 1)^3 - 1} - (2^{qs^3} + q(3s^2 + 3s + 1) + \Xi) $$
is strictly positive on~$[0, + \infty)$. This also implies that $\lambda(s)$ is strictly decreasing.

Part~2 follows from part~1.

Part~3, case~$s = 1$ is given by our hypotheses~$q\ge 100(\Xi+1)$ and~$2^{q - 3} \ge q + 1 + \Xi$. For~$s > 1$, it follows from this and from the fact that the derivative of the function
$$ s \mapsto \frac{5}{4} 2^{qs^3} - (2^{qs^3} + q(3s^2 + 3s + 1) + \Xi+2(s+1)^2) $$
is strictly positive on~$[1, + \infty)$.

Part~4 follows from parts~2 and~3.
\end{proof}

\subsection{Estimating the~2 variables series}
\label{ss:estimating the 2 variables series}

Define the functions~$s^+,s^- : (- \infty, 1) \to \R$ by
$$ s^\pm(\tau) \= \left(\frac{\Xi \mp 2\xi}{q(1 - \tau)}\right)^{\frac{1}{2}}. $$

\begin{lemm}
\label{l:positive temperature first floor}
For every~$\tau$ in~$\left( \tfrac{q-1}{q}, 1 \right)$, we have
\[
 \Pi^+ \left( \tau , \lambda \left( s^+(\tau) - 2 \right) \right)
\le 25+
5 \cdot 2^{\tau \xi},
\]
and for every~$\Omega \ge 0$, we have
\[
2^{2\Omega - 3}
\le
\Pi^- \left(\tau , \lambda \left(s^-(\tau) + \Omega \right) \right).
\]

\end{lemm}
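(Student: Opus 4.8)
The plan is to split each series into its ``$I$-part'' $\sum_{s \ge 0} I_s^{\pm}(\tau, \lambda)$ and its ``$J$-part'' $\sum_{s \ge 0} J_s^{\pm}(\tau, \lambda)$, and to exploit that the second argument is tied to $\tau$ through the relations $q(1 - \tau)\, s^+(\tau)^2 = \Xi - 2\xi$ and $q(1 - \tau)\, s^-(\tau)^2 = \Xi + 2\xi$. Besides parts~1 and~2 of Lemma~\ref{l:the itinerary} and~\eqref{e:lambda upper bound}, I would freely use $1 \le \Xi - 2\xi < 2$ and $1 \le \Xi + 2\xi$ (from $\Xi = \lceil 2\xi \rceil + 1$), together with $q \ge 100(\Xi + 1)$ and $\tfrac12 < \tfrac{q - 1}{q} < \tau < 1$, which yield $(1 - 2^{-\tau})^{-1} \le 3$, $(2^{\tau} - 1)^{-1} \le 2$, and $(1 - 2^{-\lambda})^{-1} \le 2/\lambda$; I also note that $\lambda(s^+(\tau) - 2) = |J_{s^+(\tau) - 2}|^{-1}$ is defined only when $s^+(\tau) \ge 2$, which we assume. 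For the upper bound I would treat the $I$-part first. By~\eqref{eq:N I} and the identity $B \equiv 2s + 1$ on $I_s$ from~\eqref{e:B},
\[
I_s^+(\tau, \lambda)
=
2^{-\tau\left(1 - \xi + Q(s) + (\Xi - 2\xi)s\right)} \sum_{k \in I_s} 2^{-\lambda k - \tau(k - a_s)}
\le
3 \cdot 2^{\tau(\xi - 1)}\, 2^{-\tau\left(Q(s) + (\Xi - 2\xi)s\right)} ,
\]
so, since $Q(0) = 0$ and $N(k) = k$ on $I_0$, the term $s = 0$ contributes at most $(2^{\tau} - 1)^{-1} 2^{\tau\xi} \le 2 \cdot 2^{\tau\xi}$, while for $s \ge 1$ the factor $2^{-\tau Q(s)} \le 2^{-\tau q}$ makes $\sum_{s \ge 1} I_s^+$ negligible; hence $\sum_{s} I_s^+ \le 2 \cdot 2^{\tau\xi}$.

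For the $J$-part of the upper bound I would use~\eqref{e:J formula},
\[
J_s^+(\tau, \lambda)
=
2^{-\tau Q(s + 1) - (\Xi - 2\xi)\tau(s + 1)} \sum_{k \in J_s} 2^{-\lambda k} ,
\]
bounding $\sum_{k \in J_s} 2^{-\lambda k}$ by the smaller of $|J_s| \le a_{s + 1} = 2^{Q(s + 1)}$ and $(1 - 2^{-\lambda})^{-1} 2^{-\lambda b_s} \le \tfrac{2}{\lambda}\, 2^{-\lambda b_s}$. Since $1/\lambda = |J_{s^+(\tau) - 2}|$ is comparable to $a_{s^+(\tau) - 1} = 2^{Q(s^+(\tau) - 1)}$ by parts~1--2 of Lemma~\ref{l:the itinerary}, the crossover between the two bounds occurs near $s + 1 = s^+(\tau)$. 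For $s + 1 \le s^+(\tau)$ the first bound gives $J_s^+ \le 2^{(\Xi - 2\xi)\, g(s + 1)}$, where $g(u) \= \tfrac{u}{s^+(\tau)^2}\bigl(u^2 - \tau\, s^+(\tau)^2\bigr)$; splitting at $u = s^+(\tau)/2$ one checks $g(u) \le -u/4$ for $u \le s^+(\tau)/2$ and $g(u) \le \tfrac2q - \tfrac34\bigl(s^+(\tau) - u\bigr)$ for $s^+(\tau)/2 < u \le s^+(\tau)$, so these $J_s^+$ decay geometrically in $u$, resp.\ in $s^+(\tau) - u$, and sum to an absolute constant. For $s + 1 > s^+(\tau)$ the second bound applies: here $\lambda b_s \ge a_s / |J_{s^+(\tau) - 2}| \ge 2^{Q(s) - Q(s^+(\tau) - 1)} \ge 1$, and the exponent $Q(s^+(\tau) - 1) - \tau Q(s + 1) \le q\bigl((s^+(\tau) - 1)^3 - \tau\, s^+(\tau)^3\bigr) \le -q\, s^+(\tau)^2$ (using $s^+(\tau) \ge 2$), so every such term is super-exponentially small. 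Thus $\sum_s J_s^+$ is bounded by an absolute constant, and combining with the $I$-part and the leading $1$ gives $\Pi^+\bigl(\tau, \lambda(s^+(\tau) - 2)\bigr) \le 25 + 5 \cdot 2^{\tau\xi}$.

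For the lower bound it suffices to estimate one term. Put $\hat{s} \= s^-(\tau) + \Omega$, $\lambda \= \lambda(\hat s) = |J_{\hat s}|^{-1}$, and $s_1 \= \lfloor \hat s \rfloor$, so that $\Pi^-(\tau, \lambda) \ge J_{s_1}^-(\tau, \lambda)$. Since $s_1 \le \hat s$ forces $|J_{s_1}| \le |J_{\hat s}| = 1/\lambda$, parts~1--2 of Lemma~\ref{l:the itinerary} give $\lambda k \le \lambda a_{s_1 + 1} \le 2\, a_{s_1 + 1}/a_{\hat s + 1} \le 2$ for every $k \in J_{s_1}$, whence
\[
\sum_{k \in J_{s_1}} 2^{-\lambda k}
\ge
\tfrac14 |J_{s_1}|
\ge
\tfrac18 a_{s_1 + 1}
=
2^{Q(s_1 + 1) - 3} .
\]
Feeding this into~\eqref{e:J formula} and using $q(1 - \tau)\, s^-(\tau)^2 = \Xi + 2\xi$, the exponent of $J_{s_1}^-$ equals $(1 - \tau) Q(s_1 + 1) - (\Xi + 2\xi)\tau(s_1 + 1) - 3$; writing $u_1 = s_1 + 1 = \hat s + \varepsilon$ with $\varepsilon \= u_1 - \hat s \in (0, 1]$, the elementary inequality $\bigl(s^-(\tau) + \Omega + \varepsilon\bigr)^3 / s^-(\tau)^2 \ge s^-(\tau) + 3(\Omega + \varepsilon)$ together with $\tau < 1$ shows this exponent is at least $(\Xi + 2\xi)(2\Omega + 2\varepsilon) - 3 > 2\Omega - 3$ (using $\Xi + 2\xi \ge 1$ and $\varepsilon > 0$). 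Hence $\Pi^-\bigl(\tau, \lambda(s^-(\tau) + \Omega)\bigr) \ge 2^{2\Omega - 3}$.

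The main obstacle is the $J$-part of the upper bound: one must pin down the crossover index $s + 1 \approx s^+(\tau)$ and check that the handful of transitional terms around it, together with the two geometric tails, are all controlled by a single absolute constant, uniformly in $\tau$. This is exactly where the particular choice $\lambda = \lambda(s^+(\tau) - 2)$ — as opposed to $\lambda$ at a nearby argument — and the exact relation $q(1 - \tau)\, s^+(\tau)^2 = \Xi - 2\xi$ are used: an additive shift of the argument of $\lambda$ by even an absolute constant would already spoil the cancellation between $2^{-\lambda b_s}$, $2^{-\tau Q(s + 1)}$, and $|J_s|$.
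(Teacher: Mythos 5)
Your argument is correct and, at its core, follows the same route as the paper: the paper also splits $\Pi^{\pm}$ into its $I$- and $J$-parts (packaged as Sublemma~\ref{subl:11}), bounds the $I$-part by an absolute constant times $2^{\tau\xi}$, bounds each $J_s^{\pm}$ through \eqref{e:J formula} with $\sum_{k\in J_s}2^{-\lambda k}\le |J_s|\le 2^{Q(s+1)}$, and then controls the resulting exponent $(1-\tau)Q(s+1)-\tau(\Xi\mp 2\xi)(s+1)$ by convexity of the cubic — exactly your tent bound, with geometric decay away from the crossover $s+1\approx s^{+}(\tau)$ — while the lower bound comes from the single block $J_{\lfloor s^-(\tau)+\Omega\rfloor}$, just as you do (the paper phrases the exponent estimate via monotonicity of $F^-(\ell)=(1-\tau)Q(\ell)-\tau(\Xi+2\xi)\ell$, you via the cube expansion; these are the same computation). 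The one place where you genuinely deviate is the far tail $s+1>s^{+}(\tau)$: you kill it with the lacunary factor $(1-2^{-\lambda})^{-1}2^{-\lambda b_s}$ and the observation $\lambda b_s\ge 2^{Q(s)-Q(s^{+}(\tau)-1)}$, whereas the paper builds the tail into part~2 of Sublemma~\ref{subl:11} under the hypothesis $\tau>\frac{Q(s+1)-1}{Q(s+2)}$, which it must then verify at $s=s^{+}(\tau)-2$ by a small monotonicity computation with $s(\tau)=\left(2/(q(1-\tau))\right)^{1/2}$; your device avoids that verification, at the modest cost of the extra estimate on $\lambda b_s$. Two small points to tidy: when you say each tail term is super-exponentially small, keep the factor $2^{-(\Xi-2\xi)\tau(s+1)}$ (or the growth of $\tau Q(s+1)$) so that the infinite sum over $s$, not only each term, is negligible; and your standing assumption $s^{+}(\tau)\ge 2$, needed for $\lambda(s^{+}(\tau)-2)$ to be defined, is shared implicitly by the paper (part~2 of Sublemma~\ref{subl:11} requires $s\ge 0$), and is harmless in the application since there $s^{+}(\tau)\ge 20$ by \eqref{e:lower bound s+}.
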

\begin{sublemma}\label{subl:11}
The following properties hold:
\begin{enumerate}
\item[1.]
For $\tau \ge 1$ we have
\[
\Pi^+(\tau,0) \le 2 \left( 2^{\tau\xi} + 1 \right).
\]
\item[2.]
For every real number~$s$ in~$[0,+\infty)$ and every~$\tau$ in~$(2/3,1)$
satisfying~$\tau > \frac{Q(s+1)-1}{Q(s+2)}$, we have
\begin{equation*}
\Pi^+(\tau,\lambda(s))
\\ \le
4 + 5 \cdot 2^{\tau\xi}
+ 2 \sum_{j=0}^{\lfloor s \rfloor + 1}  2^{(1-\tau)Q(j+1) -\tau (\Xi - 2\xi)
(j+1) }.
\end{equation*}
\item[3.]
For every real number~$s$ in~$[0,+\infty)$ and every~$\tau > 0$, we have
\[
\frac{1}{8} 2^{(1-\tau)Q(\lfloor s \rfloor + 1)-\tau(\Xi + 2\xi)(\lfloor s
\rfloor + 1)}
\le
\Pi^-(\tau,\lambda(s)).
\]
\end{enumerate}
\end{sublemma}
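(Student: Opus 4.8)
The plan is to estimate $\Pi^{\pm}$ through its defining decomposition $\Pi^{\pm}(\tau,\lambda)=1+\sum_{i\ge0}I_i^{\pm}(\tau,\lambda)+\sum_{i\ge0}J_i^{\pm}(\tau,\lambda)$ and to dispose of the $I$-series once and for all. On $I_i$ one has $B(k)=2i+1$ and, by~\eqref{eq:N I}, $N(k)=k+1-a_i+Q(i)+\Xi i$; summing the geometric series in $k$ over $I_i$ (common ratio $2^{-(\lambda+\tau)}\le2^{-\tau}$, which is bounded away from $1$ in parts~1 and~2 since there $\tau>2/3$) cancels the factor $2^{\tau a_i}$ coming from the term $-a_i$ in $N(k)$, leaving a quantity bounded by a small absolute multiple of $2^{-\tau Q(i)}$ times $2^{\pm\tau\xi}$ times a decaying exponential in $i$. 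Since $Q(i)=qi^3$ and $q$ is huge, the tail over $i\ge1$ is dominated by a geometric series of ratio $\le2^{-\tau(q+1)}$, so $\sum_{i\ge0}I_i^{\pm}(\tau,\lambda)$ is at most a small absolute multiple of $2^{\pm\tau\xi}$; this is absorbed into the $2^{\tau\xi}$-term and the additive constant of the statement (and is irrelevant for part~3, which is a lower bound). Hence the crux in every part is the $J$-series, which I will control through the compact identity~\eqref{e:J formula}.

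Parts~1 and~3 then follow quickly. For part~1, taking $\tau\ge1$ and $\lambda=0$ in~\eqref{e:J formula} gives $J_i^+(\tau,0)=|J_i|\,2^{-\tau Q(i+1)-\tau(\Xi-2\xi)(i+1)}$; since $|J_i|<a_{i+1}=2^{Q(i+1)}$, $\tau\ge1$, and $\Xi-2\xi\ge1$, this is $\le2^{-(i+1)}$, so $\sum_{i\ge0}J_i^+(\tau,0)\le1$, and adding the constant and the $I$-bound yields $\Pi^+(\tau,0)\le2(2^{\tau\xi}+1)$. For part~3, I would keep a single nonnegative summand: $\Pi^-(\tau,\lambda(s))\ge J_{\lfloor s\rfloor}^-(\tau,\lambda(s))$, which by~\eqref{e:J formula} equals $2^{-\tau Q(\lfloor s\rfloor+1)-\tau(\Xi+2\xi)(\lfloor s\rfloor+1)}\sum_{k\in J_{\lfloor s\rfloor}}2^{-\lambda(s)k}$. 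I bound the inner sum below by $|J_{\lfloor s\rfloor}|\,2^{-\lambda(s)a_{\lfloor s\rfloor+1}}$; by part~2 of Lemma~\ref{l:the itinerary}, $|J_s|\ge a_{s+1}/2\ge a_{\lfloor s\rfloor+1}/2$ (as $\lfloor s\rfloor+1\le s+1$), so $\lambda(s)a_{\lfloor s\rfloor+1}\le2$, while the same lemma gives $|J_{\lfloor s\rfloor}|\ge a_{\lfloor s\rfloor+1}/2=2^{Q(\lfloor s\rfloor+1)-1}$; hence $\sum_{k\in J_{\lfloor s\rfloor}}2^{-\lambda(s)k}\ge2^{Q(\lfloor s\rfloor+1)-3}$, which gives exactly the bound $\tfrac18\,2^{(1-\tau)Q(\lfloor s\rfloor+1)-\tau(\Xi+2\xi)(\lfloor s\rfloor+1)}$.

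Part~2 carries the real content. I would split $\sum_{i\ge0}J_i^+(\tau,\lambda(s))$ at $i=\lfloor s\rfloor+1$. For $i\le\lfloor s\rfloor+1$ the crude bound $\sum_{k\in J_i}2^{-\lambda(s)k}\le|J_i|<a_{i+1}=2^{Q(i+1)}$ in~\eqref{e:J formula} gives $J_i^+(\tau,\lambda(s))\le2^{(1-\tau)Q(i+1)-\tau(\Xi-2\xi)(i+1)}$, i.e.\ the $j=i$ summand of the explicit series; so their sum is bounded by one copy of that series (the prefactor $2$ in the statement being slack). For $i\ge\lfloor s\rfloor+2$ I would instead bound $\sum_{k\in J_i}2^{-\lambda(s)k}$ by its geometric tail, which by~\eqref{e:lambda upper bound} is at most a bounded multiple of $|J_s|\,2^{-\lambda(s)b_i}$; since $b_i\ge a_i=2^{Q(i)}$ and $|J_s|<a_{s+1}=2^{Q(s+1)}$, one has $\lambda(s)b_i\ge2^{Q(i)-Q(s+1)}$, and for $i\ge\lfloor s\rfloor+3$ this is at least $2^{Q(s+2)-Q(s+1)}\ge2^{7q}$ (using $\lfloor s\rfloor+3>s+2$), so those terms are negligible. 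The one surviving index $i=\lfloor s\rfloor+2$ is the crossover, and it is exactly here that the hypothesis $\tau>\frac{Q(s+1)-1}{Q(s+2)}$ enters: from $\lfloor s\rfloor+3>s+2$ it yields $\tau Q(\lfloor s\rfloor+3)>\tau Q(s+2)>Q(s+1)-1$, hence $2^{Q(s+1)-\tau Q(\lfloor s\rfloor+3)}<2$, and combined with $2^{-\tau(\Xi-2\xi)(\lfloor s\rfloor+3)}\le2^{-(\lfloor s\rfloor+3)}$ this makes $J_{\lfloor s\rfloor+2}^+(\tau,\lambda(s))$ an absolute constant (in fact $<1/2$). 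Collecting the constant term $1$, the $I$-series, the crossover term, and the negligible tail into the additive constant $4$, and the low-index part into the doubled explicit sum, gives the asserted inequality.

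I expect the crossover index $i=\lfloor s\rfloor+2$ in part~2 to be the main obstacle: there the crude bound is too large (since $(1-\tau)Q(i+1)$ is of the order of $Q(s+2)-Q(s+1)$) while the ``doubly exponential'' decay of $2^{-\lambda(s)b_i}$ only just beats $1$, so neither estimate works alone, and the hypothesis $\tau>\frac{Q(s+1)-1}{Q(s+2)}$ is precisely what reconciles them. The only other point needing care is the bookkeeping of the numerical constants $4$ and $5$ in part~2 (and of the small absolute multiples in the $I$-series estimate), which is routine once the structure above is in place.
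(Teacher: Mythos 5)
Your proposal is correct and follows essentially the same route as the paper: the same decomposition of $\Pi^{\pm}$ into the constant, the $I$-series (bounded by geometric sums exactly as in the paper's \eqref{eq:Is+1}), and the $J$-series handled through \eqref{e:J formula}, with your part~3 being the paper's argument in a slightly reorganized form and your part~2 differing only in that the paper applies the hypothesis $\tau>\frac{Q(s+1)-1}{Q(s+2)}$ uniformly to all indices $j\ge\lfloor s\rfloor+2$, whereas you use it only at the crossover index and kill $j\ge\lfloor s\rfloor+3$ via the doubly exponentially small factor $2^{-\lambda(s)b_j}$ (which also works). One small slip: $2^{-\tau(\Xi-2\xi)(\lfloor s\rfloor+3)}\le 2^{-(\lfloor s\rfloor+3)}$ would need $\tau(\Xi-2\xi)\ge 1$, which is not guaranteed for $\tau<1$, but the weaker bound $2^{-\tau(\lfloor s\rfloor+3)}\le 2^{-2}$ still makes the crossover term an absolute constant, and the slack in the additive constant $4$ absorbs it, so the conclusion is unaffected.
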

\proof
\partn{1}
By~\eqref{e:B}, \eqref{eq:N I}, and the hypothesis~$\Xi - 2 \xi \ge 1$, for every~$\tau > 0$ and every~$\lambda \ge 0$ we have
\begin{equation}
\label{eq:Is+1}
\begin{split}
\sum_{s=0}^{+\infty} I_s^+(\tau, \lambda)
& \le
\sum_{s=0}^{+\infty}  \sum_{m = 1}^{|I_s|} 2^{-\tau(Q(s) + \Xi s + m) +
\tau \xi \cdot (2s + 1)}
\\ & =
2^{\tau\xi} \sum_{s=0}^{+\infty} 2^{-\tau(Q(s) + (\Xi - 2 \xi)s) }
\sum_{m = 1}^{|I_{s}|} 2^{-\tau m}
\\ & \le
2^{\tau\xi} \frac{2^{-\tau}}{1-2^{-\tau}} \sum_{s=0}^{+\infty} 2^{-\tau (\Xi - 2\xi)s}
\\ & \le
2^{\tau\xi} \frac{ 2^{-\tau}}{\left(1-2^{-\tau}\right)^2}.
  \end{split}
\end{equation}
By~\eqref{e:B}, \eqref{eq:N J}, the hypothesis~$\Xi - 2\xi \ge 1$, and that for every $s \ge 0$ we have $|J_s|\le a_{s + 1}$, we obtain for every~$\tau \ge 1$ and with $\lambda=0$
\begin{equation}
\label{eq:J}
\begin{split}
\sum_{s=0}^{+\infty} J_s^+(\tau, 0)
= &
\sum_{s=0}^{+\infty} |J_s|2^{-\tau (Q(s+1) + \Xi\cdot(s+1)) + 2 \tau \xi
\cdot (s+1)}
\\ \le &
\sum_{s=0}^{+\infty} 2^{-(\tau-1)Q(s+1) - \tau (\Xi - 2 \xi) (s+1) }
\\ \le &
\frac{2^{-\tau}}{1-2^{-\tau}}.
  \end{split}
\end{equation}
Combining inequalities~\eqref{eq:Is+1} and~\eqref{eq:J}, we get for every~$\tau \ge 1$
\begin{equation*}
\Pi^+(\tau, 0)
\le
1 + 2^{\tau\xi} \frac{2^{-\tau}}{\left( 1-2^{-\tau} \right)^2}
+ \frac{2^{-\tau}}{1-2^{-\tau}}
\le
2 \left( 2^{\tau\xi} + 1 \right).
\end{equation*}
This is part~1 of the sublemma.

\partn{2}
Fix~$s$ in $[0,+\infty)$ and set $s_0 \= {\lfloor s \rfloor}$.
We use~\eqref{eq:Is+1} to estimate~$\Pi^+(\tau, \lambda(s))$.
To estimate~$\sum_{j=0}^{+\infty}J_j^+(\tau, \lambda(s))$, note that by definition
of~$\lambda(s)$, for each integer~$\ell$ satisfying~$1 \le \ell \le |J_s|$ we
have
\[
\frac{1}{2} \le  2^{- \lambda(s) \ell} \le 1.
\]
On the other hand, the inequality $q \ge 100(\Xi+1)$ implies that the function~$j \mapsto
|J_j|$ is nondecreasing on~$[0, + \infty)$.
Therefore, for each~$j$ in~$\{0, \ldots, s_0 \}$ we have~$|J_j| \le |J_s|$ and
then
\begin{equation}
\label{eq:geometrica1}
\frac{1}{2}|J_j|
\le
\sum_{m = 1}^{|J_j|} 2^{- \lambda(s) m}
\le
|J_j|.
\end{equation}
On the other hand
\begin{equation}
\label{eq:geometrica2}
\sum_{m = 1}^{+ \infty} 2^{- \lambda(s) m}
=
\frac{1}{2^{\lambda(s)} - 1}
\le
\frac{1}{\lambda(s) \log 2}
\le
2 |J_s|.
\end{equation}
Note also that for $j\in \N$, by~\eqref{e:J formula}, the inequalities $b_j\ge b_0 = q+\Xi+1 \ge 1$, \eqref{eq:geometrica2} and $|J_s| \le 2^{Q(s+1)}$, we have
\begin{equation}
\label{eq:10}
\begin{split}
J_j^+(\tau,\lambda(s))
& =
2^{-\tau(Q(j+1) + (\Xi - 2 \xi)(j + 1))} \sum_{k \in J_j} 2^{- \lambda(s)k}
  \\ & \le 
2 |J_s| 2^{-\tau(Q(j+1) + (\Xi - 2 \xi)(j + 1))}
\\ & \le 
2 \cdot 2^{Q(s + 1)-\tau(Q(j+1) + (\Xi - 2 \xi)(j + 1))}.
  \end{split}
\end{equation}
Taking~$j = s_0 + 1$ and using the inequality~$Q(s + 1) \le Q(s_0 + 2)$, we
obtain,
\begin{equation}
\label{eq:intermediate term}
J_{s_0 + 1}^+(\tau, \lambda(s))
\le
2 \cdot 2^{(1 - \tau) Q(s_0 + 2) - \tau (\Xi - 2 \xi)(s_0 + 2)} .
\end{equation}
On the other hand, our hypothesis $\tau > \frac{Q(s + 1) - 1}{Q(s + 2)}$ implies that for $j \ge s_0+2$ we have
\[
Q(s + 1) - \tau Q(j + 1) \le Q(s + 1) - \tau Q(s + 2) <  1.
\]
So, using the inequalities~$\Xi - 2 \xi \ge 1$ and $\tau > 2/3$, and summing~\eqref{eq:10} over~$j$
satisfying~$j \ge s_0 + 2$, we obtain
\begin{equation}
\label{eq2}
\sum_{j = s_0 + 2}^{+\infty} J_j^+(\tau,\lambda(s))
\le
\sum_{j=s_0 + 2}^{+\infty} 2^{2 -\tau (\Xi - 2\xi) (j+1)}
\le
\frac{2^{2-3\tau  }}{1-2^{-\tau }}
< 
\frac{1}{1-2^{-\tau }}.
\end{equation}
Now we complete the estimate of~$\sum_{j=0}^{+\infty}J_j^+(\tau, \lambda(s))$, by
estimating the terms for which~$j$ is in~$\{0, \ldots, s_0 \}$.
From~\eqref{eq:geometrica1}, \eqref{e:J formula}, and $|J_j| \le
2^{Q(j+1)}$, we deduce that for every integer~$j$ in~$\{0, \ldots, s_0 \}$ we
have
\begin{equation*}
J_j^+(\tau, \lambda(s))
 \le
|J_j| \cdot 2^{-\tau (Q(j+1) + (\Xi - 2 \xi)(j + 1))}
 \le
2^{(1-\tau )Q(j+1) -\tau (\Xi - 2 \xi)(j+1)}.
\end{equation*}
Summing over~$j$ in~$\{0, \ldots, s_0 \}$ and using
inequalities~\eqref{eq:intermediate term} and~\eqref{eq2}, we obtain
\begin{multline*}
 \sum_{j=0}^{+\infty}J_j^+(\tau, \lambda(s))
\\ 
\begin{aligned}
& \le
\sum_{j=0}^{s_0} 2^{(1-\tau )Q(j+1) -\tau (\Xi - 2 \xi)(j+1)}
+ 2 \cdot 2^{(1 - \tau) Q(s_0 + 2) - \tau (\Xi - 2 \xi)(s_0 + 2)}
+ \frac{ 1}{1-2^{-\tau }}
\\ & \le
2 \sum_{j=0}^{s_0 + 1} 2^{(1-\tau )Q(j+1) -\tau (\Xi - 2\xi)(j+1)}
+ \frac{ 1}{1-2^{-\tau }}.
\end{aligned}
\end{multline*}
Together with~\eqref{eq:Is+1} this implies
\begin{multline}
\label{e:preliminary upper bound}
\Pi^+(\tau ,\lambda(s))
\\ \le
1 + \frac{ 2^{-\tau }}{\left( 1-2^{-\tau } \right)^2} 2^{\tau \xi}
+
2 \sum_{j=0}^{s_0 + 1} 2^{(1-\tau )Q(j+1) -\tau (\Xi - 2\xi)(j+1)}
+ \frac{ 1}{1- 2^{-\tau } }.
\end{multline}
Using  that~$\tau$ is in~$(2/3, 1)$, we have~$\frac{ 1}{ 1-2^{-\tau } } \le 3$ and $\frac{ 2^{-\tau }}{\left( 1-2^{-\tau } \right)^2} \le 5$.
We obtain part~2 of the sublemma by combining these estimates
with~\eqref{e:preliminary upper bound}.

\partn{3}
Fix~$s$ in~$[0, + \infty)$ and set~$s_0 \= {\lfloor s \rfloor}$.
By parts~1 and 2 of Lemma~\ref{l:the itinerary} we have
$$
\lambda(s)(b_{s_0}-1) \le  2 \cdot \frac{b_{s_0}}{a_{s+1}} \le 1.
$$
By this inequality,  part~2 of Lemma~\ref{l:the itinerary},  \eqref{e:J formula}, the first inequality of~\eqref{eq:geometrica1}, and the first equality in~\eqref{eq:10}, we have
\begin{multline*}
 \frac{1}{2^3} 2^{(1-\tau )Q(s_0+1)-\tau (\Xi + 2 \xi)(s_0+1)}
\\
\begin{aligned}
 & \le
\frac{1}{2^2} |J_{s_0}| 2^{- \tau (Q(s_0+1) + \Xi \cdot (s_0+1)) - 2 \tau  \xi \cdot (s_0 + 1)}
\\ & \le
\frac{1}{2} |J_{s_0}| 2^{- \lambda(s) (b_{s_0} - 1) -\tau(Q(s_0+1) + \Xi \cdot (s_0+1)) - 2 \tau \xi \cdot (s_0 + 1)}
\\ & \le
\left(\sum_{m = 1}^{|J_{s_0}|} 2^{- \lambda(s) m} \right) 2^{- \lambda(s) (b_{s_0} - 1) -\tau(Q(s_0+1) + \Xi \cdot (s_0+1)) - 2 \tau \xi \cdot (s_0 + 1)}
\\ & =
 J_{s_0}^-(\tau, \lambda(s)).
\end{aligned}
\end{multline*}
This proves part~3 of the sublemma and completes the proof.
\endproof

\proof[Proof of Lemma~\ref{l:positive temperature first floor}]
Fix~$\tau$ in~$\left( \tfrac{q-1}{q}, 1 \right)$ and $\Omega\ge 0$. Put $s(\tau)=\left(\tfrac{2}{q(1-\tau)}\right)^{\frac{1}{2}} $.
Note that~$\tau > \tfrac{q-1}{q} \Leftrightarrow 2< s(\tau)^2$. This last inequality and our hypothesis $q\ge 100(\Xi+1)$ imply that  
$$(3q+2) s(\tau)  <   3qs(\tau)^2 + q +1. $$
Finally the last inequality is equivalent to
\begin{eqnarray*} 
2s(\tau)  & <  & \,Q(s(\tau)) - Q(s(\tau)-1) +1 \quad   \Leftrightarrow \\
(1-\tau)Q(s(\tau)) &   <  & Q(s(\tau)) - Q(s(\tau)-1) +1 \quad   \Leftrightarrow \\
\frac{Q\left(s(\tau)- 1\right) - 1}{Q\left(s(\tau)\right)} &<  &\tau. 
\end{eqnarray*}
On the other hand, the function~$s \mapsto \frac{Q(s - 1) - 1}{Q(s)}$ is strictly increasing on $[0,+\infty)$.
Using $1\le \Xi -2\xi\le 2$ we deduce 
$$
\frac{Q(s^+(\tau) - 1) - 1}{Q(s^+(\tau))}
\le
\frac{Q\left(s(\tau) - 1\right) - 1}{Q(s(\tau))}
<
\tau.$$
So the hypotheses of part~2 of Sublemma~\ref{subl:11} are satisfied with~$s =
s^+(\tau) - 2$.
Let~$F : \R \to \R$ be the quadratic function defined by
$$ F(\ell) \= (1 - \tau) Q(\ell) - \tau (\Xi - 2\xi) \ell. $$
Note that $F(0) = 0$,
$$ F \left( \frac{s^+(\tau)}{2} \right)
=
\frac{(\Xi - 2\xi)^\frac{3}{2}}{8q^{\frac{1}{2}}}(1-\tau)^{\frac{1}{2}} - \frac{3\tau(\Xi - 2\xi)}{4} \left( \frac{s^+(\tau)}{2}
\right)
$$ 
and 
$$
F(s^+(\tau)) = \frac{(\Xi - 2\xi)^\frac{3}{2}}{q^{\frac{1}{2}}}(1-\tau)^{\frac{1}{2}}. $$
Using that~$F$ is convex on $[0,+\infty)$, we conclude that for each~$\ell$ in~$[0, s^+(\tau)]$ we
have
\begin{multline*}
F(\ell) = (1 - \tau) Q(\ell) - \tau (\Xi - 2 \xi) \ell \\
\le
\frac{(\Xi - 2\xi)^\frac{3}{2}}{q^{\frac{1}{2}}}(1-\tau)^{\frac{1}{2}} - \frac{3\tau(\Xi - 2\xi)}{4} \min \left\{ \ell, s^+(\tau) - \ell \right\}.
\end{multline*}
Therefore, putting~$s^+ = s^+(\tau) - 2$ and using~$1 \le \Xi - 2\xi \le 2$,~$q \ge 100(\Xi+1)$, and~$\tau\ge \frac{q-1}{q} $, we have
\begin{displaymath}\label{e: first sum}
  \begin{split}
\sum_{j=0}^{\lfloor s^+ \rfloor + 1} 2^{(1- \tau)Q(j+1) - \tau (\Xi - 2\xi) (j+1)}
& \le
2 \sum_{\ell = 0}^{\lfloor s^+ \rfloor + 2 } 2^{\frac{(\Xi - 2\xi)^\frac{3}{2}}{8 \sqrt{2}}(1-\tau)^{\frac{1}{2}} - \frac{3\tau(\Xi - 2\xi)}{4} \ell}
\\ & \le
2 \cdot 2^{\frac{1}{40}} \frac{1}{1-2^{-\frac{3}{4}\cdot \frac{2}{3}}}
\\ & <
10.    
  \end{split}
\end{displaymath}
The first inequality of the lemma is then obtained using part~2 of
Sublemma~\ref{subl:11} with~$s = s^+$.

To prove the second inequality,
first we define~$F^- : \R \to \R$ as the quadratic function given by
$$ F^-(\ell) \= (1 - \tau) Q(\ell) - \tau (\Xi + 2\xi) \ell. $$
Notice from the definition of $\Xi=\lceil 2\xi\rceil +1$ and the hypothesis that $\tau$ is in $\left(\tfrac{q-1}{q},1\right)$, we have
\begin{displaymath}
  \begin{split}
F^-(s^-(\tau) + \Omega)
  = \;&(1-\tau)\cdot(\Xi+2\xi)s^-(\tau) +
(3 - \tau)\cdot (\Xi + 2\xi)  \Omega \;  
\\ \; & + q\left(1 - \tau\right)  \Omega^2 \left(3s^-(\tau) + \Omega\right)
\\  \ge \; &
2(\Xi + 2\xi) \Omega
\\  \ge \;&
2\Omega.
  \end{split}
\end{displaymath}
Using that~$F^-$ is increasing on the interval~$\left[\left(\frac{\tau}{3}\right)^{\frac{1}{2}}  s^-(\tau), + \infty \right)$, that contains~$s^-(\tau)$, 
and putting~$s^- = s^-(\tau) + \Omega$, we get
\begin{equation*}
2\Omega
\le
F^-(s^-)
\le
F^-({\lfloor s^- \rfloor} + 1)
=
(1 - \tau) Q({\lfloor s^- \rfloor} + 1) - \tau (\Xi +2\xi) ({\lfloor s^-
\rfloor} + 1).
\end{equation*}
Together with part~3 of Sublemma~\ref{subl:11} with~$s = s^-$, we obtain
\[
2^{2\Omega}
\le
2^{(1 -\tau )Q(\lfloor s^- \rfloor + 1) - \tau (\Xi + 2\xi)(\lfloor s^- \rfloor
+ 1)}
\le
8 \Pi^-\left(\tau , \lambda \left(s^- \right) \right),
\]
proving the second inequality of the lemma.
\endproof

\subsection{Estimating the weighted~2 variables series}
\label{ss:estimating the weighted 2 variables series}

For each~$s$ in~$\N_0$, $\tau > 0$, and~$\lambda \ge 0$ put
\begin{equation*}
  \begin{split}
\tI_s^+(\tau, \lambda)
& \=
\sum_{k\in  I_s} k \cdot 2^{- \lambda k -\tau N(k) + \tau \xi B(k)},
\\
\tJ_s^+(\tau, \lambda)
& \=
\sum_{k\in J_s} k \cdot 2^{- \lambda k -\tau N(k) + \tau \xi B(k)},    
  \end{split}
\end{equation*}
and
$$ \tPi^+(\tau,\lambda)
\=
1 + \sum_{s=0}^{+\infty} \tI_s^+ (\tau, \lambda)
+ \sum_{s=0}^{+\infty} \tJ_s^+ (\tau, \lambda). $$

Noting that by part~2 of Lemma~\ref{l:the itinerary} we have~$a_{s + 1} - b_s = |J_s| \ge s^2 + 1$, define for each~$\tau > 0$ and~$\lambda\ge 0$,
$$ \hJ_s^\pm(\tau, \lambda)
\=
\sum_{k = b_s + s^2}^{a_{s + 1} - 1} (k + 1 - b_s - s^2) \cdot 2^{- \lambda k -\tau N(k) \pm \tau \xi B(k)}. $$
\begin{lemm}
\label{l:zero temperature tower}
For each~$\tau$ in $[1/2,1]$, the following properties hold:
\begin{enumerate}
\item[1.] For each $s>0$ we have
$ \tPi^+(\tau, \lambda(s)) < + \infty $;
\item[2.]
Given~$s\ge 10$, put~$s_0 \= \lceil s \rceil$.
Then
\begin{multline*}
\Pi^+(\tau, \lambda(s))  \le \tPi^+(\tau, \lambda(s)) 
- \sum_{\varsigma = s_0 - 3}^{s_0} \hJ_{\varsigma}^+(\tau, \lambda(s)) 
- (|J_{s_0 - 1}| - s_0^2) J_{s_0}^+(\tau, \lambda(s)) \\
\le
2^{- qs_0^2} \sum_{\varsigma = s_0 - 3}^{s_0} \hJ_{\varsigma}^-(\tau, \lambda(s)).
\end{multline*}
\end{enumerate}
\end{lemm}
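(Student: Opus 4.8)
The plan is to work entirely at the level of the defining series, exploiting that $\Pi^\pm$, $\tPi^\pm$, and all the $\hJ_\varsigma^\pm$ are built from the \emph{same} general term $c_k^\pm(\tau,\lambda)\=2^{-\lambda k-\tau N(k)\pm\tau\xi B(k)}$, summed over ranges of $k$ against polynomial weights, so the whole statement reduces to elementary estimates for such weighted sums together with the arithmetic of $Q(s)=qs^3$. I will use constantly that $N(\cdot)$ and $B(\cdot)$ are \emph{constant} on each $J_s$ --- equal to $Q(s+1)+\Xi(s+1)$ and $2(s+1)$ by~\eqref{eq:N J} and~\eqref{e:B} --- so that by~\eqref{e:J formula} the common weight carried on $J_s$ is $W_s^\pm\=2^{-\tau Q(s+1)-\tau(\Xi\mp2\xi)(s+1)}$, with $W_s^-=2^{-4\tau\xi(s+1)}W_s^+$; and that on each $I_s$ one has $N(k)=k+1-a_s+Q(s)+\Xi s$ by~\eqref{eq:N I}, so the term there carries an extra decaying factor $2^{-\tau(k-a_s)}$.

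Part~1 is immediate: writing $\tPi^+(\tau,\lambda(s))=1+\sum_{k\ge1}k\,c_k^+(\tau,\lambda(s))$, one has $N(k)\ge0$ and, for $k$ in $I_j\cup J_j$, $B(k)\le2(j+1)$ with $k\ge a_j=2^{qj^3}$, so $2^{\tau\xi B(k)}$ is subexponential in $k$; since $\lambda(s)=|J_s|^{-1}>0$ by~\eqref{e:lambda upper bound}, the factor $2^{-\lambda(s)k}$ forces convergence. For the first inequality of part~2, put $H_\varsigma\=[b_\varsigma+\varsigma^2,a_{\varsigma+1})\subset J_\varsigma$; these ranges are pairwise disjoint, and $\tPi^+-\Pi^+=\sum_{k\ge1}(k-1)c_k^+$. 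Term by term on $H_\varsigma$ the coefficient $k+1-b_\varsigma-\varsigma^2$ of $\hJ_\varsigma^+$ is $\le k-1$ since $b_\varsigma+\varsigma^2\ge2$; and for $\varsigma=s_0$ the extra weight $|J_{s_0-1}|-s_0^2$ spread over all of $J_{s_0}$ is absorbed as well, because $|J_{s_0-1}|<a_{s_0}<b_{s_0}$ (immediate from the definitions) together with $s_0\ge2$ keep the total subtracted coefficient on $J_{s_0}$ below $k-1$. Summing over the disjoint ranges gives $\sum_{\varsigma=s_0-3}^{s_0}\hJ_\varsigma^++(|J_{s_0-1}|-s_0^2)J_{s_0}^+\le\tPi^+-\Pi^+$, which rearranges to the first inequality.

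For the second inequality I would split the leftover $L\=\tPi^+(\tau,\lambda(s))-\sum_{\varsigma=s_0-3}^{s_0}\hJ_\varsigma^+-(|J_{s_0-1}|-s_0^2)J_{s_0}^+$ into five blocks and bound each by a tiny multiple of a suitable $\hJ_\varsigma^-(\tau,\lambda(s))$, $\varsigma\in\{s_0-3,\dots,s_0\}$: (A) the constant $1$ together with $\sum_{j\le s_0-4}(\tI_j^++\tJ_j^+)$, a geometric sum in $j$ of size $\approx2^{(2-\tau)Q(s_0-3)}$; (B) $\sum_{j=s_0-3}^{s_0}\tI_j^+$, each term $\lesssim Q(j+1)\,2^{(1-\tau)Q(j)}$ thanks to the factor $2^{-\tau(k-a_j)}$ on $I_j$; (C) for $\varsigma\in\{s_0-3,s_0-2,s_0-1\}$ the contribution $\tJ_\varsigma^+-\hJ_\varsigma^+=\sum_{k=b_\varsigma}^{b_\varsigma+\varsigma^2-1}k\,c_k^++(b_\varsigma+\varsigma^2-1)\sum_{k\in H_\varsigma}c_k^+$ (a ``head'' plus a reduced tail of $J_\varsigma$); (D) the net contribution of $J_{s_0}$, which computes to $\sum_{k=b_{s_0}}^{b_{s_0}+s_0^2-1}(k-|J_{s_0-1}|+s_0^2)c_k^++(b_{s_0}+2s_0^2-1-|J_{s_0-1}|)\sum_{k\in H_{s_0}}c_k^+$; (E) the far tail $\sum_{j\ge s_0+1}(\tI_j^++\tJ_j^+)$, where $\lambda(s)a_{s_0+1}\ge1$ (since $|J_s|\le a_{s+1}\le a_{s_0+1}$) makes $2^{-\lambda(s)k}$ kill these doubly exponentially and reduces the tail to its $j=s_0+1$ term. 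For the denominators I would use that for $\varsigma\le s_0-1<s\le s_0$ one has $b_\varsigma+\varsigma^2\ll|J_s|=1/\lambda(s)$ by parts~1--2 of Lemma~\ref{l:the itinerary}, so $2^{-\lambda(s)(b_\varsigma+\varsigma^2-1)}$ is bounded below and $\sum_{m=1}^{|J_\varsigma|-\varsigma^2}m\,2^{-\lambda(s)m}\gtrsim(|J_\varsigma|-\varsigma^2)^2$; together with $|J_\varsigma|\ge a_{\varsigma+1}/2=2^{Q(\varsigma+1)-1}$ (and, for $\varsigma=s_0$, the sum $\gtrsim\min\{|J_{s_0}|-s_0^2,|J_s|\}^2\gtrsim|J_s|^2\ge2^{2Q(s_0)-2}$) this yields $\hJ_\varsigma^-\gtrsim2^{-4\tau\xi(\varsigma+1)}W_\varsigma^+\,2^{2Q(\varsigma+1)}$. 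In every resulting quotient the $2^{-qs_0^2}$ emerges from the $Q$-gap estimates $Q(\varsigma+1)-Q(\varsigma)=q(3\varsigma^2+3\varsigma+1)$ and $Q(\varsigma)-Q(\varsigma-1)=q(3\varsigma^2-3\varsigma+1)$ combined with $\varsigma\ge s_0-3$, $s_0\ge10$, while the nuisances $2^{4\tau\xi(\varsigma+1)}$ and the polynomial prefactors are absorbed using $\tau\le1$ and $q\ge100(\Xi+1)\ge200\xi$; one then adds the five bounds.

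I expect block~(D) to be the main obstacle, and the key point there is the exact cancellation $b_{s_0}-|J_{s_0-1}|=Q(s_0+1)-Q(s_0-1)+2\Xi+2^{Q(s_0-1)}\approx2^{Q(s_0-1)}$, which is far below $b_{s_0}\approx2^{Q(s_0)}$: without subtracting $(|J_{s_0-1}|-s_0^2)J_{s_0}^+$ the tail of $J_{s_0}$ would carry a coefficient of order $2^{Q(s_0)}$, exactly what $\hJ_{s_0}^-$ can absorb but not beat, so one would get no $2^{-qs_0^2}$ margin; the cancellation cuts the effective coefficient down to $\approx2^{Q(s_0-1)}$, after which the gap $Q(s_0)-Q(s_0-1)\ge\tfrac52 qs_0^2$ supplies the required factor. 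The remaining difficulty is bookkeeping: making all the constants line up uniformly over the real parameter $s$ --- in particular in the regime $s_0>s$, where $|J_{s_0}|>1/\lambda(s)$ and the geometric sums saturate at $\approx|J_s|^2$ rather than growing like $|J_{s_0}|^2$ --- and uniformly over $\tau\in[1/2,1]$ (the worst case throughout being $\tau=1$); all of this relies only on the $Q$-gap arithmetic and the geometric-series bounds already used in the proof of Sublemma~\ref{subl:11}.
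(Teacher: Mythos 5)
Your proposal is correct and takes essentially the same route as the paper: the first inequality by direct termwise comparison of coefficients (as the paper notes, it follows from the definitions), and the second by splitting the leftover into the same head/tail blocks and bounding each against $2^{-q s_0^2}\hJ_{\varsigma}^{-}(\tau,\lambda(s))$ using the $|J_s|^2$-saturation of the geometric sums, the cancellation $b_{s_0}-|J_{s_0-1}|\approx 2^{Q(s_0-1)}$, and the $Q$-gap arithmetic with $q\ge 100(\Xi+1)$. These core quotient estimates are exactly what the paper isolates as Sublemma~\ref{l:positive temperature core} (whose parts~1--3 correspond to your denominator bounds, your block~(C), and your block~(D)), so the only difference is that you re-derive them inline and group the remaining blocks slightly differently.
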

The proof of this lemma is after the following lemma.
\begin{sublemma}
\label{l:positive temperature core}
Given~$s\ge 10$, put $s_0\=\lceil s \rceil$. For every  $\tau$ in $[1/2,1]$   the following properties hold:
\begin{enumerate}
\item[1.]
$ \hJ_{s_0}^-(\tau, \lambda(s))
\ge
\frac{1}{2^{9}} 2^{2q(s + 1)^3 - q \tau \cdot (s_0+1)^3 - (\Xi +2 \xi) \tau \cdot (s_0+1)}; $
\item[2.] 
For every integer~$\varsigma$ in~$[s_0 - 3, s_0 -1]$ we have
$$(b_\varsigma + \varsigma^2)  J_{\varsigma}^+(\tau, \lambda(s))
\le
 \frac{1}{20}2^{-qs_0^2} \hJ_{\varsigma}^-(\tau, \lambda(s)); $$
\item[3.]
$ (b_{s_0} - |J_{s_0 - 1}| + 2s_0^2) J_{s_0}^+(\tau, \lambda(s))
\le
\frac{1}{4}2^{-qs_0^2} \hJ_{s_0}^-(\tau, \lambda(s)). $
\end{enumerate}
\end{sublemma}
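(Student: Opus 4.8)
The plan is to reduce all three inequalities, by the same product formula that gives~\eqref{e:J formula}, to elementary bounds for geometric and arithmetic sums. The point is that along $J_\varsigma$ the functions $N$ and $B$ are constant: by~\eqref{e:B} and~\eqref{eq:N J} they equal $Q(\varsigma+1)+\Xi(\varsigma+1)$ and $2(\varsigma+1)$. Hence
\begin{equation*}
\hJ_\varsigma^-(\tau,\lambda(s))
=
2^{-\tau Q(\varsigma+1)-(\Xi+2\xi)\tau(\varsigma+1)}\sum_{k=b_\varsigma+\varsigma^2}^{a_{\varsigma+1}-1}(k+1-b_\varsigma-\varsigma^2)\,2^{-\lambda(s)k},
\end{equation*}
and $J_\varsigma^+(\tau,\lambda(s))$ has the same prefactor with $\Xi+2\xi$ replaced by $\Xi-2\xi$ and inner sum $\sum_{k\in J_\varsigma}2^{-\lambda(s)k}$; the same factorization holds for $\varsigma=s_0$. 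Consequently $\hJ_\varsigma^-/J_\varsigma^+$ equals $2^{-4\xi\tau(\varsigma+1)}$ times the quotient of the two inner sums, and it remains to control those.

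For the inner sums I would use $\lambda(s)=|J_s|^{-1}$ together with Lemma~\ref{l:the itinerary}. When $\varsigma\le s_0-1$, every $k\in J_\varsigma$ satisfies $k<a_{\varsigma+1}\le a_{s_0}\le a_{s+1}\le2|J_s|$ (part~2 and $s_0=\lceil s\rceil\le s+1$), so $2^{-\lambda(s)k}\in[\tfrac14,1]$; hence $\sum_{k\in J_\varsigma}2^{-\lambda(s)k}\le|J_\varsigma|$, while, using $|J_\varsigma|-\varsigma^2\ge\tfrac12|J_\varsigma|$ for $s_0\ge10$ and $\sum_{m=1}^{M}m\ge\tfrac12M^2$, the weighted sum is at least $|J_\varsigma|^2/32$, which gives $\hJ_\varsigma^-/J_\varsigma^+\ge2^{-4\xi\tau(\varsigma+1)}|J_\varsigma|/32$. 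For $\varsigma=s_0$ the range of the weighted sum can extend past $a_{s+1}$, so there I would instead factor out $2^{-\lambda(s)(b_{s_0}+s_0^2)}\ge2^{-3}$ (part~4) and use $2^{-\lambda(s)\ell}\ge\tfrac12$ for $0\le\ell\le|J_s|$; since the number of admissible terms is $\min(|J_{s_0}|-1-s_0^2,|J_s|)\ge a_{s+1}/4$ (using $|J_{s_0}|\ge a_{s_0+1}/2\ge a_{s+1}/2$, $|J_s|\ge a_{s+1}/2$, and discarding the negligible $1+s_0^2$), this yields $\hJ_{s_0}^-(\tau,\lambda(s))\ge2^{-9}a_{s+1}^2\cdot 2^{-\tau Q(s_0+1)-(\Xi+2\xi)\tau(s_0+1)}$, which is part~1 because $a_{s+1}^2=2^{2q(s+1)^3}$ and $Q(s_0+1)=q(s_0+1)^3$; likewise the upper bound $\sum_{k\in J_{s_0}}2^{-\lambda(s)k}\le1+\sum_{m\ge1}2^{-\lambda(s)m}\le3|J_s|$ together with the same lower bound gives $\hJ_{s_0}^-/J_{s_0}^+\ge2^{-4\xi\tau(s_0+1)}|J_s|/(3\cdot2^7)$.

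With the ratio estimates in place, parts~2 and~3 follow by inserting the sizes of $a_s$, $b_s$, $|J_s|$ from Lemma~\ref{l:the itinerary} into the cubic $Q$. For part~2, part~3 of that lemma gives $b_\varsigma+\varsigma^2\le\tfrac54 a_\varsigma=\tfrac54 2^{q\varsigma^3}$ and part~2 gives $|J_\varsigma|\ge a_{\varsigma+1}/2=2^{q(\varsigma+1)^3-1}$, so $(b_\varsigma+\varsigma^2)/|J_\varsigma|\le\tfrac52 2^{-q(3\varsigma^2+3\varsigma+1)}$; combined with the ratio bound, $(b_\varsigma+\varsigma^2)J_\varsigma^+\le80\cdot2^{4\xi\tau(\varsigma+1)-q(3\varsigma^2+3\varsigma+1)}\hJ_\varsigma^-$, and the claim reduces to $\log_2 1600+4\xi\tau(\varsigma+1)\le q\bigl(3\varsigma^2+3\varsigma+1-s_0^2\bigr)$. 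Since $\varsigma\ge s_0-3$ one has $3\varsigma^2+3\varsigma+1-s_0^2\ge2s_0^2-15s_0+19>0$ for $s_0\ge10$, and as $q\ge100(\Xi+1)\ge200\xi$ (because $\Xi=\lceil2\xi\rceil+1$) the right-hand side dominates. Part~3 is the same computation with $\varsigma=s_0$: here $b_{s_0}-|J_{s_0-1}|+2s_0^2=|I_{s_0}|+b_{s_0-1}+2s_0^2\le2a_{s_0-1}=2^{q(s_0-1)^3+1}$ while $|J_s|\ge a_{s_0}/2=2^{qs_0^3-1}$, so the coefficient over $|J_s|$ is at most $2^{2-q(3s_0^2-3s_0+1)}$, and the claim reduces to $\log_2 3+11+4\xi\tau(s_0+1)\le q(2s_0^2-3s_0+1)$, which again holds for $s_0\ge10$.

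I expect the main obstacle to be the bookkeeping rather than any single idea: the slack $2^{-qs_0^2}$ on the right of parts~2 and~3, and the mismatch factor $2^{-4\xi\tau(\varsigma+1)}$ between the $+$ and $-$ series, must both be swallowed by the cubic gap $Q(\varsigma+1)-Q(\varsigma)\sim3q\varsigma^2$ even at the extreme index $\varsigma=s_0-3$; this is exactly where the hypotheses $q\ge100(\Xi+1)$ and $s_0\ge10$ are used, and one must propagate the factor-of-two losses from the geometric sums carefully enough to land inside the stated constants $2^{-9}$, $1/20$ and $1/4$.
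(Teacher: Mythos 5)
Your proposal is correct and follows essentially the same route as the paper: the same block factorization coming from $N$ and $B$ being constant on each $J_\varsigma$, the same comparison of the plain and weighted inner geometric sums (the paper's quotient formula), and the same inputs from Lemma~\ref{l:the itinerary} together with the hypotheses $q\ge 100(\Xi+1)$ and $s\ge 10$ to let the cubic gap in $Q$ absorb the polynomial coefficients and the mismatch factor $2^{4\xi\tau(\varsigma+1)}$. The only difference is technical and minor: where the paper evaluates the truncated series $\sum_{m\le N} m\,2^{-\lambda(s)m}$ in closed form and then uses $2^{\lambda(s)}-1\le\lambda(s)$, you truncate at roughly $|J_s|$ terms and use $2^{-\lambda(s)m}\ge\tfrac12$, which lands on the same constants.
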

\begin{proof}

\partn{1}
By~\eqref{e:B}, \eqref{eq:N J}, and part~4 of Lemma~\ref{l:the itinerary}, we have
\begin{multline}
\label{eq:tgeometrica}
\hJ^-_{s_0}(\tau,\lambda(s))
\\
\begin{aligned}
& =
2^{- q \tau \cdot (s_0+1)^3 - (\Xi + 2 \xi) \tau \cdot (s_0+1)}
\sum_{k = b_{s_0}+s_0^2 }^{a_{s_0 + 1} - 1} (k+1-b_{s_0}-s_0^2) \cdot 2^{- \lambda(s)k}
\\ & \ge
\frac{1}{2^3} 2^{- q \tau \cdot (s_0+1)^3 - (\Xi + 2 \xi) \tau \cdot (s_0+1)}
\sum_{m = 1}^{|J_{s_0}|-s_0^2}m \cdot 2^{- \lambda(s) m}.
  \end{aligned}
\end{multline}
Noticing that for every integer~$N \ge 1$ we have
$$ \sum_{m = 1}^{N} m \cdot 2^{- \lambda(s) m}
=
\frac{2^{\lambda(s)}}{(2^{\lambda(s)} - 1)^2}
\left( 1 - (N + 1) 2^{- \lambda(s) N} + N 2^{- \lambda(s) (N + 1)} \right), $$
and that the function
$$ \eta \mapsto 1 - (N + 1) \eta^N + N \eta^{N + 1} $$
is decreasing on~$[0, 1]$, we have 
\begin{multline*}
\sum_{m = 1}^{|J_{s_0}|-s_0^2} m \cdot 2^{- \lambda(s) m}
\\
\begin{aligned}
& =
\frac{2^{\lambda(s)}}{(2^{\lambda(s)} - 1)^2}
\cdot \left( 1 - (|J_{s_0}| -s_0^2 + 1) 2^{- \lambda(s_0)( |J_{s_0}|-s_0^2)} +   (|J_{s_0}| -s_0^2)2^{- \lambda(s_0) (|J_{s_0}| -s_0^2 + 1)} \right)
\\ & =
\frac{2^{\lambda(s)}}{(2^{\lambda(s)} - 1)^2}
\cdot \left( 1 - 2^{ \lambda(s_0)s_0^2-1} -  ( |J_{s_0}|-s_0^2)2^{ \lambda(s_0)s_0^2-1}  \left( 1 - 2^{- \lambda(s_0)} \right) \right)
\\ & =
\frac{2^{\lambda(s)}}{(2^{\lambda(s)} - 1)^2}
\cdot \left( 1 - 2^{ \lambda(s_0)s_0^2-1}\left( 1 +  \left( |J_{s_0}|-s_0^2\right)  \left( 1 - 2^{- \lambda(s_0)} \right)\right) \right).
\end{aligned}
\end{multline*}
Observe that  $1-2^{-\lambda(s_0)}\le \log (2^{\lambda(s_0)})$.  Using this inequality  we obtain
\begin{multline}
\label{e:core main estimate}
\sum_{m = 1}^{|J_{s_0}|-s_0^2} m \cdot 2^{- \lambda(s) m}
\ge
  \frac{2^{\lambda(s)}}{(2^{\lambda(s)} - 1)^2} \left( 1 - 2^{\lambda(s_0)s_0^2-1}(1 + \log 2) \right) 
\\ \ge
\frac{1}{2^4} \frac{1}{(2^{\lambda(s)} - 1)^2}.
\end{multline}
Note that by~$0\le \lambda(s) \le 1$, we have~$2^{\lambda(s)} - 1 \le  \lambda(s)$.
Thus, together with part~2 of Lemma~\ref{l:the itinerary}, the previous chain of inequalities implies
$$ \sum_{m = 1}^{|J_{s_0}| - s_0^2} m \cdot 2^{- \lambda(s) m}
\ge
\frac{1}{2^4} \cdot |J_s|^2
\ge
\frac{1}{2^6} \cdot 2^{2q (s + 1)^3}. $$
Together with~\eqref{eq:tgeometrica}, the inequality $\Xi \ge 2 \xi$, and our hypothesis~$q \ge 100 (\Xi + 1)$, this implies
\begin{displaymath}
\hJ_{s_0}^-(\tau,\lambda(s))
\ge
\frac{1}{2^{9}} 2^{2q(s + 1)^3 - q \tau \cdot (s_0+1)^3 - (\Xi + 2 \xi) \tau \cdot (s_0+1)}.
\end{displaymath}
This proves part~1.

\partn{2}
Let~$\varsigma$ be an integer in~$[s_0 - 3, s_0]$ and note that by~\eqref{e:B}, \eqref{e:J formula}, and the definition of~$\hJ^-_{\varsigma}$, we have
\begin{equation}
\label{e:core quotient formula}
\frac{J_{\varsigma}^+(\tau, \lambda(s))}{\hJ_{\varsigma}^-(\tau, \lambda(s))}
=
2^{4 \tau \xi \cdot (\varsigma + 1) + \lambda(s) \varsigma^2} \frac{\sum_{m = 1}^{|J_{\varsigma}|} 2^{-\lambda(s) m}}{\sum_{m = 1}^{|J_{\varsigma}| - \varsigma^2} m \cdot 2^{-\lambda(s) m}}.
\end{equation}
Suppose~$\varsigma$ is in~$[s_0 - 3, s_0 - 1]$.
Then~$\lambda(s) |J_{\varsigma}| \le 1$, so 
\begin{displaymath}
  \begin{split}
\frac{J_{\varsigma}^+(\tau, \lambda(s))}{\hJ_{\varsigma}^-(\tau, \lambda(s))}
& \le
2 \cdot 2^{4 \tau \xi \cdot (\varsigma + 1) + \lambda(s) \varsigma^2} \frac{\sum_{m = 1}^{|J_{\varsigma}|} 2^{- \lambda(s) m} }{\sum_{m = 1}^{|J_{\varsigma}| - \varsigma^2} m}
\\ & \le
2^2 \cdot 2^{4 \tau \xi \cdot (\varsigma + 1) + \lambda(s) \varsigma^2} \frac{|J_{\varsigma}|}{(|J_{\varsigma}| - \varsigma^2)^2}.    
  \end{split}
\end{displaymath}
Noting that by part~2 of Lemma~\ref{l:the itinerary} and our hypotheses $s\ge 10$ and~$q \ge 100 (\Xi + 1)$    we have
$$ \lambda(s) \varsigma^2 \le \varsigma^2 / |J_\varsigma| \le 1
\quad \text{and} \quad
|J_{\varsigma}| \le 2(|J_{\varsigma}| - \varsigma^2).$$
By parts~2 and~3  of Lemma~\ref{l:the itinerary}, by the inequality~$\Xi \ge 2 \xi$, by our hypotheses~$q \ge 100 (\Xi + 1)$, $\tau \in [1/2,1]$ and~$s \ge 10$, we have
\begin{displaymath}
  \begin{split}
(b_\varsigma+\varsigma^2) \frac{J^+_{\varsigma}(\tau, \lambda(s))}{\hJ^-_{\varsigma}(\tau, \lambda(s))}
& \le
2^6 a_\varsigma 2^{4\tau \xi \cdot(\varsigma+1)} |J_{\varsigma}|^{-1}
\\ & \le  2^{q\varsigma^3 + q (\varsigma+1) - q (\varsigma + 1)^3}
\\ & \le
2^{-q(3\varsigma^2+2\varsigma)}
\\ & \le
2^{- 3 q \varsigma^2}
\\ & \le
\frac{1}{20}2^{- q s_0^2}.    
  \end{split}
\end{displaymath}
This proves part~2.

\partn{3} 
By part~3 of Lemma~\ref{l:the itinerary} and the hypothesis $s\ge 10$  we have
\begin{displaymath}
b_{s_0} - |J_{s_0 - 1}| + 2 s_0^2
=
b_{s_0}-(a_{s_0}-b_{s_0-1}) + 2s_0^2
\le
b_{s_0-1} + 2s_0^2
\le
2\cdot 2^{q(s_0 - 1)^3}.
\end{displaymath}
Thus, by part~2 of Lemma~\ref{l:the itinerary}, \eqref{e:core main estimate}, \eqref{e:core quotient formula}, and the inequality~$\lambda(s) \le 1$, we have
\begin{displaymath}
  \begin{split}
(b_{s_0} - |J_{s_0 - 1}| + 2 s_0^2) \frac{J_{s_0}^+(\tau, \lambda(s))}{\hJ_{s_0}^-(\tau, \lambda(s))}
& \le
2^6 \cdot 2^{q(s_0 - 1)^3 + 4 \tau \xi \cdot (s_0 + 1) + \lambda(s) s_0^2} (2^{\lambda(s)} - 1)
\\ & \le
2^6 \lambda(s) \cdot 2^{q(s_0 - 1)^3 + 4 \tau \xi \cdot (s_0 + 1) + \lambda(s) s_0^2}
\\ & \le
2^7 \cdot 2^{- q (s + 1)^3 + q(s_0 - 1)^3 + 4 \tau \xi \cdot (s_0 + 1) + \lambda(s) s_0^2}.    
  \end{split}
\end{displaymath}
Using~$\lambda(s) s_0^2 = s_0^2 |J_s|^{-1} \le 1$, the inequality~$\Xi \ge 2\xi$, and our hypotheses~$q \ge 100 (\Xi + 1)$, $\tau\in [1/2,1]$ and~$s \ge 10$, we have
\begin{displaymath}
(b_{s_0} - |J_{s_0 - 1}| + 2 s_0^2) \frac{J_{s_0}^+(\tau, \lambda(s))}{\hJ_{s_0}^-(\tau, \lambda(s))}
\le
2^{- q s_0^3 + q(s_0 - 1)^3 + q (s_0 + 1)}
\le
\frac{1}{4}2^{- q s_0^2}.
\end{displaymath}
This completes the proof of part~3 and of the lemma.
\end{proof}

\proof[Proof of Lemma~\ref{l:zero temperature tower}]
\partn{1}
Note that for every~$s \ge 0$, we have~$\lambda(s) \le 1$ and
\begin{displaymath}
\sum_{m = 1}^{+ \infty} m \cdot  2^{- \lambda(s) m}
=
\frac{2^{\lambda(s)}}{\left(2^{\lambda(s)} - 1 \right)^2}
\le
\frac{2^{\lambda(s)}}{(\lambda(s) \log 2)^2}
\le
2^3 |J_s|^2
\le
\left( 2^3 \right) 2^{2q(s+1)^3}.
\end{displaymath}
Together with~\eqref{e:B}, \eqref{eq:N J}, \eqref{eq:N I}, for every~$j$ in~$\N_0$ this implies
\begin{multline}
\label{eq:t10 2}
\tJ_j^+(\tau,\lambda(s))
+ \tI_{j + 1}^+(\tau,\lambda(s))
\\
\begin{aligned}
& \le
2^{-\tau(q(j+1)^3 + \Xi \cdot (j+1)) +  \tau\xi \cdot (2j+3)}
\sum_{k \in J_j \cup I_{j + 1}} k \cdot 2^{- \lambda(s)k}
\\ & \le 
(2^{\tau\cdot\xi + 3}) 2^{2q(s + 1)^3 - q \tau \cdot (j+1)^3 - \tau (\Xi-2\xi) \cdot (j + 1)}.
\end{aligned}
\end{multline}
By the inequality~$\Xi - 2\xi \ge 1$ and  our hypothesis that~$\tau$ is in  $[1/2,1]$,  for every~$j \ge 2s+1$ we have
$$ \tJ_j^+(\tau,\lambda(s))
+ \tI_{j + 1}^+(\tau,\lambda(s))
\le
\left( 2^{ \xi + 3} \right) 2^{- \tau \cdot (j + 1)}. $$
This implies that~$\tPi^+(\tau, \lambda(s))$ is finite, as wanted.

\partn{2}
The first inequality follows directly from the definitions.
Note that by~\eqref{e:B} and~\eqref{eq:N I}, and our hypothesis that~$\tau$ is in  $[1/2,1]$, we have
\begin{equation}
\label{eq:first t block 2}
1+ \tI_0^+(\tau, \lambda(s))
\le
1 + \sum_{k = 1}^{+ \infty} k \cdot 2^{- \tau k +  \xi}
=
1+ 2^{\xi} \frac{2^{- \tau}}{(1 - 2^{- \tau})^2}
\le
10\cdot 2^{\xi}.
\end{equation}
By part~3 of Lemma~\ref{l:the itinerary}, \eqref{e:B}, \eqref{eq:N I}, the inequality~$\Xi - 2\xi \ge 1$ and our hypothesis that~$\tau$ is in  $[1/2,1]$,  for every integer $j\ge 1$ we have
\begin{displaymath}
  \begin{split}
\tI_{j}(\tau, \lambda(s))
& \le
\left( 2^{ \xi} \right) 2^{- \tau (q j^3 + (\Xi - 2 \xi)j)}
\sum_{m = 1}^{|I_{j}|} \left( a_j + m \right) 2^{- \tau m}
\\ 
& \le
\left( 2^{ \xi} \cdot \frac{5}{4} \right) 2^{- \tau (q j^3 + (\Xi - 2 \xi)j)}
a_j \sum_{m = 1}^{|I_{j}|}  2^{- \tau m}
\\
& \le
\left( 2^{ \xi + 2} \right)
2^{q j^3 - q \tau j^3 - (\Xi - 2 \xi) \tau j} 
\\ & \le
\left( 2^{\xi + 2} \right)
2^{q (1 - \tau) j^3 - \tau j}.    
  \end{split}
\end{displaymath}
Combined with~\eqref{eq:first t block 2} and our hypothesis that~$\tau$ is in  $[1/2,1]$, this implies
\begin{equation*}
  \label{eq:total tI}
1+ \sum_{j = 0}^{s_0 + 1}  \tI_{j}^+(\tau, \lambda(s))
\le
\left( 2^{ \xi + 5 }\right) 2^{(1-\tau)q (s_0 + 1)^3}.
\end{equation*}
Together with part~1 of Lemma~\ref{l:positive temperature core},  the inequality~$\Xi \ge 2 \xi$ and our hypotheses~$s \ge 10$ and $q\ge 100(\Xi+1)$, this  inequality implies
\begin{equation}
    \label{eq:first t I 2}
1+ \sum_{j = 0}^{s_0 + 1}  \tI_{j}^+(\tau, \lambda(s))
\le
\frac{1}{2^3} 2^{- q s_0^2} \hJ_{s_0}^-(\tau, \lambda(s)).
\end{equation}

On the other hand, by~\eqref{e:B} and~\eqref{eq:N J} for every~$j$ in~$\{0, \ldots, s_0 - 4 \}$ we have
\begin{displaymath}
  \begin{split}
\tJ_j^+(\tau,\lambda(s))
& =
2^{-\tau(q(j+1)^3 + \Xi \cdot (j+1)) + 2 \tau \xi \cdot (j+1)}
\sum_{k \in J_j} k \cdot 2^{- \lambda(s)k}
\\ & \le 
|J_j|2^{q(j + 1)^3 - q \tau \cdot (j+1)^3 - (\Xi - 2 \xi) \tau \cdot (j + 1)}
\\ & \le 
2^{2q(j + 1)^3 - q \tau \cdot (j+1)^3 - (\Xi - 2 \xi) \tau \cdot (j + 1)}
\\ & \le
2^{2q(s_0 -3)^3 - q \tau \cdot (s_0-3)^3 - (\Xi - 2 \xi) \tau \cdot (s_0 -3)}.
  \end{split}
\end{displaymath}
Together with part~1 of Lemma~\ref{l:positive temperature core}, the inequality $\Xi\ge 2\xi$ and with our hypotheses~$q \ge 100 (\Xi + 1)$ and ~$s\ge 10$, this implies
\begin{equation}
  \label{eq:first t Js 2}
\frac{\sum_{j = 0}^{s_0 - 4} \tJ_j^+(\tau, \lambda(s))}{\hJ_{s_0}^-(\tau, \lambda(s))}
\le
s_0 2^{- 2 q s_0^2}
\le
\frac{1}{2^2}  2^{-   q s_0^2}.
\end{equation}

On the other hand, by~\eqref{eq:t10 2}, part~1 of Lemma~\ref{l:positive temperature core}, and our hypotheses~$q \ge 100 (\Xi + 1)$ and $\tau \in [1/2,1]$, for every integer~$j \ge s_0 + 1$ we have
\begin{multline*}
\frac{\tJ_j^+(\tau, \lambda(s)) + \tI_{j + 1}^+(\tau, \lambda(s))}{\hJ_{s_0}^-(\tau, \lambda(s))}
\\
  \begin{aligned}
& \le
\left( 2^{\xi +12} \right) 2^{-q \tau \cdot ((j + 1)^3 - (s_0+1)^3) - (\Xi-2\xi)\tau\cdot(j + 1)+ (\Xi+2\xi)\tau\cdot(s_0 + 1)}
\\ & \le
\left( 2^{ \xi +12} \right) 2^{-q \tau \cdot ((j - s_0)^3+3(j-s_0)^2(s_0+1) + 3(j-s_0)(s_0+1)^2) + 4\xi\tau(s_0 + 1)}
\\ & \le
\frac{1}{2^4} 2^{-q s_0^2(j - s_0)}.
  \end{aligned}
\end{multline*}
Summing over~$j \ge s_0 + 1$ and using our hypotheses~$q \ge 100 (\Xi + 1)$ and~$s \ge 10$, we obtain
\begin{equation*}
\frac{\sum_{j = s_0 + 1}^{+ \infty} \left( \tJ_j^+(\tau, \lambda(s)) + \tI_{j + 1}^+(\tau, \lambda(s)) \right)}{\hJ_{s_0}^-(\tau, \lambda(s))}
\le
\frac{1}{2^4} \frac{2^{-q s_0^2}}{1 - 2^{-q s_0^2}}
\le
 \frac{1}{2^3}2^{- q s_0^2}.
\end{equation*}
Combined with~\eqref{eq:first t I 2} and~\eqref{eq:first t Js 2}, this implies 
\begin{equation}\label{e:total measure minus core towers}
\tPi^+(\tau, \lambda(s)) - \sum_{\varsigma = s_0 - 3}^{s_0} \tJ_{\varsigma}^+(\tau, \lambda(s)) \le
\frac{1}{2} 2^{- qs_0^2} \hJ_{s_0}^-(\tau, \lambda(s)).
\end{equation}

Notice that for each positive integer $\varsigma$ we have 
\begin{multline}\label{e:difference tower total measure}
  \tJ_{\varsigma}^+(\tau, \lambda(s)) -  \hJ_{\varsigma}^+(\tau, \lambda(s))
  \\ 
\begin{aligned}
& \le   \sum_{k=b_\varsigma}^{b_\varsigma+\varsigma^2-1} k \cdot 2^{- \lambda k -\tau N(k) + \tau \xi B(k)} +
\sum_{k = b_\varsigma + \varsigma^2}^{a_{\varsigma + 1} - 1} ( b_\varsigma + \varsigma^2) \cdot 2^{- \lambda k -\tau N(k)  + \tau \xi B(k)} \\ 
& \le  (b_\varsigma + \varsigma^2)  J_\varsigma^+(\tau,\lambda(s)).
\end{aligned}
\end{multline}
Using this and part~2 of Lemma~\ref{l:positive temperature core} for each integer $\varsigma$ in $[s_0-3, s_0-1]$, we get
\begin{equation}\label{e:difference tower total measure in s}
\tJ_{\varsigma}^+(\tau, \lambda(s)) -  \hJ_{\varsigma}^+(\tau, \lambda(s)) 
 \le  \frac{1}{20}2^{-qs_0^2} \hJ_{\varsigma}^-(\tau, \lambda(s)).
\end{equation}
Now by~\eqref{e:difference tower total measure}  with $s=s_0$ and part~3 of Lemma~\ref{l:positive temperature core}  we have 
\begin{multline}\label{e:difference tower total measure in s_0}
\tJ_{s_0}^+(\tau, \lambda(s))- \hJ_{s_0}^+(\tau, \lambda(s))- (|J_{s_0-1}|-s_0^2)J^+_{s_0}(\tau,\lambda(s)) \le \\   \frac{1}{4} 2^{-qs_0^2} \hJ_{s_0}^-(\tau, \lambda(s)).
\end{multline}
Finally, putting  together~\eqref{e:total measure minus core towers},  \eqref{e:difference tower total measure in s}  and~\eqref{e:difference tower total measure in s_0} we obtain the desired inequality and complete the proof of the lemma.
\endproof

\bibliographystyle{alpha}

\end{document}